\renewcommand{\geq}{\geqslant}
\renewcommand{\leq}{\leqslant}
\newtheorem{theorem}{Theorem}[section]
\newtheorem{lemma}[theorem]{Lemma}
\newtheorem{proposition}[theorem]{Proposition}
\newtheorem{remark}[theorem]{Remark}
\newtheorem*{main-theorem}{Main Theorem}
\newtheorem*{remark*}{Remark}
\numberwithin{equation}{section}
\def\phi{\varphi}
\newcommand{\mc}{\mathcal}
\newcommand{\rr}{\mathbb{R}}
\newcommand{\nn}{\mathbb{N}}
\newcommand{\cc}{\mathbb{C}}
\newcommand{\hh}{\mathbb{H}}
\newcommand{\zz}{\mathbb{Z}}
\newcommand{\eps}{\epsilon}
\newcommand{\pl}{\partial}
\newcommand{\x}{\times}
\newcommand{\til}{\widetilde}
\newcommand{\cjd}{\rangle}
\newcommand{\cjg}{\langle}
\newcommand{\demi}{\frac{1}{2}}
\newcommand{\la}{\lambda}
\newcommand{\indic}{\operatorname{1\negthinspace l}}
\def\phi{\varphi}
\def\be{\begin{eqnarray*}}
\def\ee{\end{eqnarray*}}
\def\ben{\begin{eqnarray}}
\def\een{\end{eqnarray}}
\def\ker{\text{ker}}
\def\L2R{L_{\text{Rest}}^2}
\def\11{\mathds{1}}
\def\HS{\mathbb{H}}
\def\L2c{L^2_{\text{comp}}}
\def\<{\langle}
\def\>{\rangle}
\begin{document}

\title[Random walk on cusps]{
Random walk on surfaces with hyperbolic cusps}

\author[H. Christianson]{Hans~Christianson}
\address{Massachusetts Institute of Technology, Department of Mathematics\\
77 Mass. Ave., Cambridge, MA 02139-4307, USA}
\email{hans@math.mit.edu}
\author{Colin~Guillarmou}
\address{DMA, U.M.R. 8553 CNRS\\
Ecole Normale Sup\'erieure\\
45 rue d'Ulm\\ 
F 75230 Paris cedex 05 \\France}
\email{cguillar@dma.ens.fr}

\author[L. Michel]{Laurent~Michel}
\address{Lab. Dieudonn\'e\\
Univ. de Nice Sophia-Antipolis \\
Parc Valrose\\
06108 Nice\\ 
FRANCE}
\email{lmichel@math.unice.fr}

\maketitle
\begin{abstract}
We consider the operator associated to a random walk on finite volume surfaces with hyperbolic cusps. We study the spectral gap (upper and lower bound) associated to this operator and deduce some rate of convergence of the iterated 
kernel towards its stationary distribution. 
\end{abstract}

\section{Introduction}

In this work, we study the operator of random walk 
on finite volume surfaces with hyperbolic cusps.  On a Riemannian manifold $(M,g)$ with finite volume, 
the $h$-random walk operator is simply defined by averaging functions on geodesic balls of size $h>0$ as follows
\[ K_hf(m):=\frac{1}{|B_h(m)|}\int_{B_h(m)}f(m'){dv}_g(m')\]
where $B_h(x):=\{m'\in M; d(m',m)\leq h\}$ is the geodesic ball of center $m\in M$ and radius $h$, and $d(.,.), |B_h(m)|$ 
denote respectively the Riemannian distance and the Riemannian volume of $B_h(m)$.
This operator appeared in the recent work of Lebeau-Michel \cite{LebMi}, 
in which they study the random walk operator on compact manifolds.  They studied in particular the spectrum of 
this operator for small step $h>0$, and proved a sharp spectral gap for $K_h$, which provides
the exponential rate of convergence of the kernel $K_h^N(m,m')dv_g(m')$ of the iterated operator 
to a stationary probability mesure, in total variation norms. 
Related works on Metropolis algorithm were studied in \cite{DiaLeb} on the real line and \cite{DiaLebMi} in higher dimension. 
All these results rely on a very precise analysis of the spectrum of these operators (localization of eigenvalues, Weyl type estimates, eigenfunction estimates in $L^\infty$ norm). For an overview of this subject and more references on convergences of Markov kernels, we refer to \cite{Dia}.

More recently, the two last authors studied such random walk operators on unbounded domain of the flat Euclidian space endowed with a smooth probability density \cite{GuMi}. In this situation and for certain densities, since the domain is unbounded, the random walk operator may have essential spectrum at distance $O(h^2)$ from $1$ and the uniform total variation estimate fails to be true.

The motivation of the present work is to consider the simplest case of non-compact manifolds 
for which the kernel 
\[k_h(m,m')= \frac 1 {|B_h(m)|}\indic_{d(m,m')<h}dv_g(m')\] 
is still a Markov kernel, and which have a radius of injectivity equal to $0$.
The non-compactness of the manifold should create some essential spectrum for $K_h$ and 
it is not clear a priori that a spectral gap even exists in that case. Intuitively, the walk could need infinitely many steps 
to fill the whole manifold and approach the stationary measure in a total variation norm.
For surfaces, the radius of injectivity tending to $0$ at infinity 
makes the geometric structure of balls near infinity more complicated, and they will typically change topology from 
something simply connected to some domains with non trivial $\pi_1$.  It is then of interest to study what  
types of result one can or cannot expect in this setting. \\

Let us now be more precise.
Consider a surface $(M,g)$ with finite volume and finitely many ends $E_0,\dots E_n$,
with $E_i$ isometric to a hyperbolic cusp
\[ (t_i,\infty)_t \x (\rr/\ell \zz)_z \textrm{ with metric } g=dt^2+e^{-2t}dz^2.\]
for some $t_i>0$. Each end can also be viewed as a subset of the quotient $\cjg\gamma\cjd\backslash\hh^2$
of $\hh^2$ by an abelian group generated by one translation $\gamma:(x,y)\in\hh^2\to(x,y+\ell)\in\hh^2$ where the hyperbolic plane is 
represented by $\hh^2=\{ (x,y)\in \rr_+\x\rr\}$. 
 
We denote by $B_h(m)$ the geodesic ball in $M$ of radius $h>0$ and center $m$, then $|B_h(m)|$ will denote its volume with respect to
 $g$. Let $d\nu_h$ be the probability measure on $M$ defined by $d\nu_h=\frac{|B_h(m)|}{Z_h}{\rm dv}_g(m)$, 
where $Z_h$ is  a renormalizing constant. We define the random walk operator $K_h$ by 
 \[K_hf(m):=\frac{1}{|B_h(m)|}\int_{B_h(m)}f(m')dv_g(m')\]
Then, $K_h$ maps $L^\infty(M,d\nu_h)$ into itself, $L^1(M,d\nu_h)$ into itself, both with norm $1$. Hence, it maps $L^2(M,d\nu_h)$ into itself with norm $1$. Moreover, it is self-adjoint on $L^2(M,d\nu_h)$. Hence, the probability density $d\nu_h$ is stationary for $K_h$, that is $K_h^t(d\nu_h)=d\nu_h$ for any $x\in M$, where 
$K_h^t$ denotes the transpose operator of $K_h$ acting on Borel measures. 
In that situation, it is standard that the iterated kernel $K_h^n(x,dy)$ converges to the stationary measure $d\nu_h$ when $n$ goes to infinity. The associated rate of convergence is closely related to the spectrum of $K_h$ and more precisely to the distance between $1$ and the largest eigenvalue less than $1$. The main result of this paper is the following

\begin{theorem}\label{th:main} 
There exists $h_0>0$ and $\delta>0$ such that the following hold true:
\begin{enumerate}
\item[i)] For any $h\in]0,h_0]$, the essential spectrum of $K_h$ acting on $L^2(M,d\nu_h)$ is given by the interval
\[
I_h=[\frac h{\sinh(h)}A,\frac h{\sinh(h)}]
\]
where $A=\min_{x>0}\frac{\sin(x)}{x}>-1$.
\item[ii)]  For any $h\in ]0,h_0]$, $Spec(K_h)\cap [-1,-1+\delta]=\emptyset$.

\item[iii)] There exists $c>0$ such that for any $h\in ]0,h_0]$, $1$ is a simple eigenvalue of $K_h$ and   the spectral gap $g(h):={\rm dist}({\rm Spec}(K_h)\setminus\{1\},1)$  enjoys
\[
ch^2\leq g(h)\leq \min\Big(\frac{(\la_1+\alpha(h))h^2}{8}, 1-\frac{h}{\sinh(h)}\Big)
\]
where $\la_1$ is the smallest non-zero $L^2$ eigenvalue of $\Delta_g$ on $M$ and $\alpha(h)$ a function
tending to $0$ as $h\to 0$.
\end{enumerate}

\end{theorem}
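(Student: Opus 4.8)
The plan is to split the analysis into the "end" (cusp) region and the "compact" region, and to treat the three items in turn; the essential spectrum depends only on the geometry near infinity, while the spectral gap involves a global argument.

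For (i), I would first compute the volume $|B_h(m)|$ of a geodesic ball explicitly in a cusp $(t,\infty)_t\times(\rr/\ell\zz)_z$ with metric $dt^2+e^{-2t}dz^2$. The key geometric fact is that as $t\to\infty$ the injectivity radius $\sim e^{-t}$ tends to $0$, so for fixed $h$ there is a transition value $t_h$ beyond which the ball $B_h(m)$ wraps all the way around the $z$-circle, and in that regime $B_h(m)$ is (up to exponentially small errors) the flat tube $\{|t'-t|<h\}$, whose $g$-volume is $\ell\int_{t-h}^{t+h}e^{-s}ds = 2\ell e^{-t}\sinh(h)$. One then Fourier-decomposes $L^2$ of the cusp in the $z$-variable: on the zeroth Fourier mode $K_h$ acts (modulo a compact/exponentially small perturbation) as the convolution-type operator $f(t)\mapsto \frac{1}{2\sinh h}\int_{t-h}^{t+h} f(s)\,ds$ on $L^2(e^{-t}dt)$; conjugating by $e^{-t/2}$ turns this into a translation-invariant convolution operator on $L^2(\rr)$ whose symbol, computed by Fourier transform in $t$, is $\frac{h}{\sinh h}\cdot\frac{\sin(h\xi)}{h\xi}$-type — the range of $\xi\mapsto \sin(h\xi)/(h\xi)$ over the continuous spectrum parameter gives exactly $[\,\frac{h}{\sinh h}A,\ \frac{h}{\sinh h}\,]$ with $A=\min_{x>0}\sin(x)/x$. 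The nonzero Fourier modes $k\neq 0$ give operators that are relatively compact (the $e^{-2t}k^2/$-scaling pushes their spectrum away), and the compact part of $M$ contributes only discrete spectrum, so Weyl's theorem on the essential spectrum yields $\sigma_{\mathrm{ess}}(K_h)=I_h$. The main obstacle here is making rigorous the "wrapping" transition and controlling $|B_h(m)|$ and the ball indicator uniformly for $t$ near $t_h$, where the ball is neither a flat tube nor simply connected.

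For (ii), the point is that $-1$ is far from the spectrum. On the cusp region this is immediate from (i) since $I_h\subset[\frac{h}{\sinh h}A,1]$ and $A>-1$ is a fixed constant, so $\mathrm{dist}(-1,I_h)\geq 1+A>0$ uniformly in small $h$. For the discrete spectrum near $-1$ one argues that an eigenfunction with eigenvalue near $-1$ would have to oscillate at scale $h$, but the positivity-improving nature of the averaging operator (its kernel is a nonnegative function supported where $d(m,m')<h$) combined with a local parametrix / semiclassical expansion $K_h = \indic + \frac{h^2}{8}\Delta_g + O(h^3)$ in the compact region forces $\Spec(K_h)$ to lie in $[-1+\delta,1]$ there. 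Concretely I would use a covering of $M$ by the cusp ends plus a compact piece, an IMS-type localization formula, and the bound $\langle K_h u,u\rangle \geq (-1+\delta)\|u\|^2$ on each piece.

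For (iii), simplicity of the eigenvalue $1$ follows from connectedness of $M$ together with the positivity-improving property of $K_h$ (a Perron–Frobenius argument: the constants are the unique, up to scalar, invariant $L^2$ function, using that $d\nu_h$ is a probability measure so $1\in L^2(M,d\nu_h)$). For the \emph{lower} bound $g(h)\geq ch^2$: by (i)–(ii) the spectrum in $[-1+\delta, 1)$ consists of the essential part, which is bounded above by $\frac{h}{\sinh h}<1$ with $1-\frac{h}{\sinh h}\sim \frac{h^2}{6}$, together with finitely many eigenvalues; it remains to show no eigenvalue accumulates at $1$ faster than $h^2$. I would prove this via a Poincaré-type inequality: for $u\perp 1$, $\langle (\indic - K_h)u,u\rangle \geq \frac{h^2}{8}(1-o(1))\langle \Delta_g u,u\rangle \geq \frac{h^2}{8}(1-o(1))\lambda_1'\|u\|^2$ where $\lambda_1'$ is a Poincaré constant that, crucially, does not degenerate because of the essential-spectrum gap (the bottom of the essential spectrum of $\Delta_g$ on this cusp surface is positive, or at worst one uses the combined $\min$ with $1-\frac h{\sinh h}$). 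The \emph{upper} bound is a test-function computation: plugging into the quadratic form $\langle(\indic-K_h)u,u\rangle$ the eigenfunction $u=\phi_1$ of $\Delta_g$ for $\lambda_1$ (an honest $L^2$ eigenfunction, which exists and decays in the cusps) and using the pointwise semiclassical expansion of $K_h$ in the region where balls are geodesically convex gives $g(h)\leq \frac{(\lambda_1+\alpha(h))h^2}{8}$; comparing with the essential spectrum top gives the other term in the $\min$. The hard part of (iii), and of the paper, is the lower bound $ch^2\leq g(h)$: one must rule out eigenvalues tunnelling up toward $1$ as $h\to 0$, which requires quantitative control of $K_h$ simultaneously in the compact region (semiclassical expansion, à la Lebeau–Michel) and in the cusp (where the operator degenerates and the earlier analysis of $I_h$ and the non-zero Fourier modes must be leveraged to show no near-$1$ eigenfunction can concentrate at infinity).
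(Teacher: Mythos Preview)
Your treatment of (i) is essentially the paper's: reduce to the far cusp via Weyl stability under compact perturbations, Fourier decompose in the circle variable, observe that only the zeroth mode survives, conjugate by $e^{-t/2}$, and identify the resulting Fourier multiplier $\xi\mapsto \sin(h\xi)/(\xi\sinh h)$; then build Weyl sequences to show the whole interval is attained. This is correct.

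For (ii), your sketch is vaguer than needed and the semiclassical expansion $K_h=1-\tfrac{h^2}{8}\Delta_g+O(h^3)$ is the wrong tool near $-1$: it says nothing about the bottom of the spectrum. The paper's argument is more elementary and works uniformly: write $\langle(1+K_h)f,f\rangle$ as a double integral of $(f(m)+f(m'))^2$ over $d(m,m')<h$, cover $M$ by balls $\omega_j$ of diameter $h/4$ with bounded overlap, and use that $\mathrm{vol}_g(\omega_j)\geq c\,\max_{m\in\omega_j}|B_h(m)|$ both in the compact part and in the cusp (from the explicit volume asymptotics) to get $\langle(1+K_h)f,f\rangle\geq\delta\|f\|^2_{L^2(d\nu_h)}$ directly.

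For (iii), there is a genuine gap. Your key proposed inequality
\[
\langle(1-K_h)u,u\rangle\ \geq\ c\,h^2\,\langle\Delta_g u,u\rangle
\]
is \emph{false}, already on nonzero Fourier modes in the cusp: if $u=e^{2\pi iky/\ell}a(t)$ with $k\neq0$ and $a$ supported where $t\sim T\gg|\log h|$, then $\langle\Delta_g u,u\rangle\gtrsim k^2e^{2T}\|u\|^2$ while $\langle(1-K_h)u,u\rangle\leq 2\|u\|^2$, so no such comparison can hold. (The same obstruction already appears on a compact manifold for high frequencies: $1-K_h$ is bounded, $h^2\Delta_g$ is not.) The paper therefore does \emph{not} pass through the Laplacian Dirichlet form. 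Instead it proves the variance (Poincar\'e) inequality
\[
\langle(1-K_h)f,f\rangle_{L^2(d\nu_h)}\ \geq\ Ch^2\Big(\|f\|^2_{L^2(d\nu_h)}-\langle f,1\rangle^2_{L^2(d\nu_h)}\Big)
\]
directly, by splitting $M$ into a compact piece $M_0$ and the cusp $E_0$, controlling the cross term, and then \emph{doubling} each piece: $M_0$ is doubled to a closed surface where the Lebeau--Michel spectral gap applies, while $E_0$ is doubled to a surface of revolution $W$ with two cusp ends. On $W$ one Fourier decomposes: the $k\neq0$ components of $K_h$ are shown to be strict contractions with norm $\leq 1-\epsilon h^2$ (an $L^\infty$ bound using $|\sin x/x|\leq 1-\epsilon\min(x^2,1)$, then interpolation), and the $k=0$ component is reduced, after further localization to the regions $|t|\lessgtr t_h$, to one-dimensional random walk operators on $\rr$ with exponentially decaying densities, for which the needed gap is supplied by \cite{GuMi}. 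Your last paragraph correctly senses that the cusp must be handled separately and that nonzero Fourier modes behave well, but the doubling constructions and the reduction to the one-dimensional model are the missing ideas without which the lower bound cannot be closed.
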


Compared to the results of \cite{LebMi} in the compact setting, our result is weaker since we are not able to provide a 
localization of the discrete spectrum of $K_h$ in terms of the Laplacian spectrum. This
is due to the fact that in the cusp, the form of the geodesic balls of radius $h$ changes dramatically 
and, in some sense, the approximation of $K_h$ by a function of the Laplacian is not 
correct anymore in this region of the surface.\\

This paper is organized as follows.
In the next section we describe the form of the operator in the cusp part of the manifold. In section 3, we study the essential spectrum of $K_h$ acting on $L^2(M,d\nu_h)$.
In section 4, we prove part ii) of the above theorem and we start the proof of iii).
The upper bound on the gap is shown by computing the operator $K_h$ on
smooth functions (in fact on the eigenfunctions of the Laplace
operator). The left lower bound  is obtained by showing a Poincar\'e inequality:
\[
\<(1-K_h) f,f\>_{L^2(d\nu_h)}\geq C h^{2}(\|f\|^2_{L^2(d\nu_h)}-\<f,1\>_{L^2(d\nu_h)}^2).
\]
For the proof of this inequality, we study separately the compact region of the manifold and the cusp. The cusp study is detailed in section 4.

In section 5, we construct some quasimodes for $K_h$ (namely the eigenfunctions of the Laplace operator). This permits to exhibit some eigenvalues of $K_h$ close to $1$ and to give a sharp upper bound on the spectral gap.
In section 6, we use the previous results to study the convergence of $K_h^n(x,dy)$ towards $d\nu_h$. We prove that the difference between these two probabilities is of order 
$C(x)e^{-ng(h)}$ in total variation norm and that the constant $C(x)$ can not be chosen uniformly with respect to $x$ (contrary to the case of a compact manifold).

In the last section, we give some smoothness results on the eigenfunctions of $K_h$. This should be the first step towards a more precise study of the spectrum in the spirit of \cite{LebMi}.

Finally, we observe that it will be clear from the proofs that we only need to consider the case
with a unique end $E:=E_0$ for $M$, and so we shall actually assume that there is only one end to simplify exposition.\\

\textbf{Ackowledgement}. H.C. is partially supported by NSF grant
DMS-0900524.  C.G. and L.M. are partially supported by ANR grant ANR-09-JCJC-0099-01. 
H.C and C.G. would like to thank MSRI (and the organizers of the program 'Analysis on singular spaces') where part of the work was done in Fall 2008.

\section{Geometry of balls and expressions of the random walk operator}

\subsection{Geometry of geodesic balls in the cusp}

In this section we study geodesic balls in the cusp.  First we briefly recall
what balls look like in the hyperbolic space $\HS^2 = \{ (x,y) \in
\rr_+ \times \rr\}$ with the same metric $(dx^2+dy^2)/x^2$.  
It is convenient to use coordinates $x = e^t$, in which case the volume element becomes
\[{dv}_{g} = e^{-t} dt dy.\]
A ball $B((e^t,y),r)$ centered at $(e^t,y)$ and of radius $r$ in $\HS^2$ is a
Euclidean ball centered at $(e^t \cosh r,y)$ and of Euclidean radius $e^t \sinh r$.
That is, a ball of radius $r$ and center $e^t$ has its ``top'' at
$(e^{t + r},y)$ and its ``bottom'' at $(e^{t-r},y)$ .  By changing to polar coordinates, it is easy to see
that a ball in $\HS^2$ has volume 
\[|B(( e^t,y), r) | = 2\pi \int_0^r \sinh(r') d r'=2\pi (\cosh(r)-1).\]

\begin{figure}
\hfill
\centerline{\input{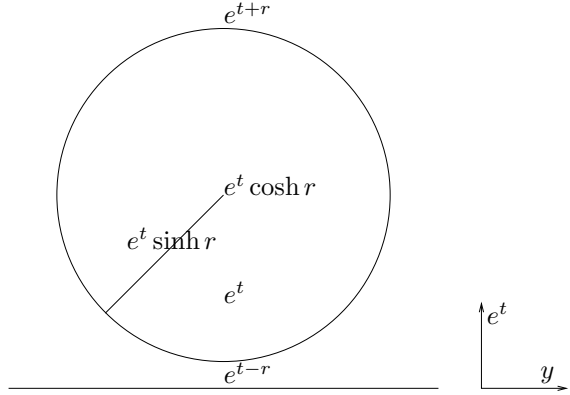}}
\caption{\label{fig:ball1} The hyperbolic ball in Euclidean
  coordinates.  The center in hyperbolic coordinates is at height
  $e^t$, and in Euclidean coordinates at $e^t \cosh r$.}
\hfill
\end{figure}

The cusp end $E_0$ of $M$ is identified with the region $x>x_1$ inside
$\cjg \gamma\cjd \backslash\hh^2$, where $\gamma(x,y)=(x,y+\ell)$ and $x_1>0$ is a fixed number.
A fundamental domain of the cyclic group $\cjg \gamma\cjd$ in 
$\hh^2$ is given by the strip $S:=\{x>0,\ell\geq y>0\}$. The end $E_0$
can thus be seen as the quotient $\cjg \gamma\cjd\backslash (S\cap \{x>x_1\})$.
The geodesic ball $B_h(m)$ in the cusp  end $E_0$ can be obtained by considering 
\[ B_h(m)=\pi(\{ m'\in\hh^2; d_{\hh^2}(m,m')\leq h\})\]
if we view $m$ as being in $S$, and where $\pi:\hh^2\to \cjg \gamma\cjd\backslash \hh^2$ is the canonical projection of the covering. 

As a consequence, we see that, as long as the Euclidean radius of
$B_h(m)$ is less than or equal to
the width $\ell$ of the strip $S$, then $B_h(m)$ can be considered as a ball of radius $h$
in $\hh^2$, while when the Euclidean radius is greater than or equal to
$\ell$, i.e. when $t\geq \log(\ell/2)-\log(\sinh(h))$, then the ball
overlaps on itself  
and can be respresented in $S$ by 
\begin{equation}\label{ballh}
 B_h(m)=\bigcup_{j=-1}^1 \{ (x',y')\in S; |e^t\cosh(h)-x'|^2+|y+j\ell-y'|^2\leq e^{2t}\sinh(h)^2\} 
 \end{equation}
if $m=(e^t,y)\in S$ and there are at most two of these three regions which have non-empty interior. \\
In particular, if $(x=e^t, y=\ell/2)$, then the ball $B_h(m)$ is given by the region 
\[ \{ 0\leq y'\leq \ell ;   |x'-e^{t}\cosh(h)|\leq
\sqrt{e^{2t}\sinh^2(h)-|y'-\ell/2|^2}  \}.\] 
See Figures \ref{fig:ball1}, \ref{fig:ball2}, and \ref{fig:ball3}.



\begin{figure}
\hfill
\begin{minipage}[t]{.48\textwidth}
\centerline{\input{ball2}}
\caption{\label{fig:ball2} The hyperbolic ball of radius $r$ is tangent to itself
 when the center is at $t = \log(\ell/2 \sinh (r))$.  For
 $t>\log(\ell/2 \sinh (r)$ the ball overlaps on itself.}
\end{minipage}
\hfill
\begin{minipage}[t]{.48\textwidth}
\centerline{
\input{ball3}}
\caption{\label{fig:ball3} The hyperbolic ball of radius $r$ for $t > \log(\ell/2
 \sinh (r))$ with shifted center.}
\end{minipage}
\hfill
\end{figure}

We are now in position to give a couple of explicit expressions for $K_h$ which will be used later.
\subsection{First expression of $K_h$ in the cusp}\label{firstexp}
Let us use the coordinates $(t,y)$ in the strip $S$ defined above so that $E_0:=\cjg \gamma\cjd \backslash S=\{(e^t,y)\in(x_0,\infty)\x (\rr/\ell\zz)\}$, 
for some $x_0>0$.
The first expression is obtained by integrating the function on vertical lines covering the geodesic ball.
Let us denote by $B_h(t,y)$ the geodesic ball on $E_0$ centered at $(e^t,y)$ of radius $h$.
It is easily seen that 
the operator $K_h$ acting on a function $\psi(t,y)$ with support in the cusp $E_0$ can be written in the form 
\begin{equation}\label{Kh}
\begin{gathered}
K_h\psi(t,y)=\frac{\indic_{[\log(\ell/\sinh(h)),\infty)}(t)}{|B_h(t,y)|}\int_{y-\ell/2}^{y+\ell/2}
\int_{t+t_-(e^{-t}|y-y'|)}^{t+t_+(e^{-t}|y-y'|)}\til{\psi}(t',y')e^{-t'}dt'dy'\\
+ \frac{\indic_{(0,\log(\ell/\sinh(h)))}(t)}{|B_h(t,y)|}
\int_{\sqrt{|\cosh(h)e^t-e^{t'}|^2+|y-y'|^2}<e^t\sinh(h)}\til{\psi}(t',y')e^{-t'}dt'dy',
\end{gathered}
\end{equation}
where $\til{\psi}$ is the lift of $\psi$ to the covering $\hh^2\to
\cjg z\to z+\ell\cjd\backslash\hh^2$, and 
\[t_\pm(z)=\log(\cosh(h)\pm\sqrt{\sinh(h)^2-|z|^2}).\]
We write $K_h\psi$ as a sum of two parts: $K_h\psi  = K_h^1\psi+K^2_h\psi$ where $K_h^1\psi$ is supported in $\{t\geq\log(\ell/2\sinh(h))\}$ 
and $K_h^2\psi$ in $\{t\leq\log(\ell/2\sinh(h))\}$.
The action of $K^1_h$ on $\psi$ can be written, using change of variables, 
\begin{equation}\label{Kh1}
K^1_h\psi(t,y)=\frac{1}{|B_h(t,y)|}\int_{-\frac{e^{-t}\ell}{2}}^{\frac{e^{-t}\ell}{2}}
\int_{\log(\cosh(h)-\sqrt{\sinh(h)^2-z^2})}^{\log(\cosh(h)+\sqrt{\sinh(h)^2-z^2})}\til{\psi}(t+T,y+ze^t)e^{-T}dTdz.
\end{equation}
Decomposing $\til{\psi}$ in Fourier series in $y$, one can write 
$\til{\psi}(t,y)=\sum_{k=-\infty}^\infty e^{\frac{2\pi iky}{\ell}}a_{k}(t)$ and  
one has 
\begin{equation}\label{expressionK^h_1}
\begin{gathered}
K^1_h\psi_1(t,y)=\sum_{k=-\infty}^\infty e^{\frac{2\pi iky}{\ell}} K^1_{h,k}a_k(t)\\
K^1_{h,k}a_k(t):=\frac{1}{|B_h(t,y)|}\int_{-\frac{e^{-t}\ell}{2}}^{\frac{e^{-t}\ell}{2}}e^{\frac{2\pi ikze^t}{\ell}}
\int_{\log(\cosh(h)-\sqrt{\sinh(h)^2-z^2})}^{\log(\cosh(h)+\sqrt{\sinh(h)^2-z^2})}a_k(t+T)e^{-T}dTdz.
\end{gathered}
\end{equation}
Using Plancherel theorem and computing the Fourier transform of $e^{-T}\indic_{[t_-,t_+]}(T)$, we obtain  
\begin{equation}\label{plancherel}
\begin{gathered}
\int_{t_-(z)}^{t_+(z)}a_k(t+T)e^{-T}dT=\int e^{it\xi}\hat{a}_k(\xi)\sigma(z,\xi)d\xi\\
\sigma(z,\xi):=\frac{(\cosh(h)+\sqrt{\sinh(h)^2-z^2})^{1+i\xi}-
(\cosh(h)-\sqrt{\sinh(h)^2-z^2})^{1+i\xi}}{(1+i\xi)(1+z^2)^{1+i\xi}}.
\end{gathered}
\end{equation} 
Therefore, $|B_h|K^1_{h,k}$ corresponds to a pseudo-differential operator on $\rr$ with symbol 
\[\sigma_k(t,\xi):=\int_{-\frac{e^{-t}\ell}{2}}^{\frac{e^{-t}\ell}{2}}e^{\frac{2\pi ikze^t}{\ell}}
\sigma(z,\xi)dz.\] 
The operator $K_h^{2}$ can be written in the same way 
\begin{equation}\label{Kh2}
K^2_h\psi(t,y)=\frac{1}{4\pi(\sinh(\frac{h}{2}))^2}\int_{-\sinh(h)}^{\sinh(h)}
\int_{t_-(z)}^{t_+(z)}\til{\psi}(t+T,y+ze^t)e^{-T}dTdz.
\end{equation}

\begin{remark}\label{remarkvolume}
Note that,  taking $\psi=1$ in \eqref{Kh1}, one has 
\begin{equation}\label{eq:formule_volume}
|B_h(t,y)|=\int_{-\frac{e^{-t}\ell}{2}}^{\frac{e^{-t}\ell}{2}}\frac{2\sqrt{\sinh(h)^2-z^2}}{1+z^2}dz.
\end{equation}
For $t>\log(\ell/2\sinh(h))$, we thus obtain the estimate 
\begin{equation}\label{asymptvolBh}
|B_h(t,y)|=2\ell \sinh(h)e^{-t} + O(e^{-3t}/\sinh(h))=|R_h(t,y)|+O(e^{-3t}/\sinh(h))
\end{equation}
where $|R_h(t,y)|$ denotes the volume of $R_h(t,y):=\{ (e^{t'},y')\in S; |t'-t|<h\}$, 
which is the `smallest' cylinder of the cusp containing $B_h(t,y)$.

On the other hand, there exists $C>0$ such that for all $t\geq \log(\ell/2\sinh(h))$, $|B_h(t)|\geq Che^{-t}$.
\end{remark}

\subsection{Second expression of $K_h$ in the cusp}\label{secondexp}

We give another expression of $K_{h}$ by integrating along horizontal lines instead.  Writing as above 
\[u(t,y)=\sum_{k}e^{\frac{2i\pi ky}{\ell}}u_k(t)\]
when $u$ is supported in an exact cusp $\{t>T\}$, 
the operator $K_h$ can be decomposed as a direct written near this region by 
\[K_hu(t,y)=\sum_k e^{\frac{2i\pi ky}{\ell}}K_{h,k}u_k(t).\]
Let us define the following 
\[T_\pm(t):=\cosh(h)\pm\sqrt{\sinh^2(h)-e^{-2t}\ell^2/4}, \quad \alpha(T):=\frac{2}{\ell}\sqrt{\sinh^2(h)-(\cosh(h)-e^T)^2}\]
then an easy computation by integrating on horizontal lines $t'={\rm cst}$ in the cusp gives 
that the operator $K_{h,k}$ decomposes into $K_{h,k}^{j}$ for $j=1,2,3$ where 
\begin{equation}
\begin{gathered}\label{decompKhk}
 K^1_{h,k}u(t)=\frac{\ell}{2|B_h|}\int_{-h}^{\log
  T_-(t)}\int_{-\alpha(T)}^{\alpha(T)}u(t+T)e^{i\pi kze^t}e^{-T}dzdT, \\
 K_{h,k}^2u(t)=\frac{\ell}{2|B_h|}\int_{\log
  T_+(t)}^h\int_{-\alpha(T)}^{\alpha(T)}u(t+T)e^{i\pi
  kze^t}e^{-T}dzdT, \\
 K_{h,k}^3u(t)=\frac{\ell}{2|B_h|}\int_{\log T_-(t)}^{\log
  T_+(t)}\int_{-1}^{1}u(t+T)e^{i\pi kz}e^{-T-t}dzdT 
\end{gathered}
\end{equation}
when $e^t\sinh(h)\geq \ell/2$ while 
\begin{equation}\label{souslecusp}
K_{h,k}u(t)=|B_h|^{-1}\frac{\ell}{2}\int_{-h}^h\int_{-\alpha(T)}^{\alpha(T)}u(t+T)e^{i\pi kze^t}e^{-T}dzdT
\end{equation} 
when $e^t\sinh(h)\leq \ell/2$.
Suppose first $e^t\sinh(h)\geq \ell/2$, then when $k\not=0$ the terms $K_{h,k}^{j}$ can be simplified by integrating in $z$ to 
\begin{equation}\label{knot=0}
\begin{gathered}
(K_{h,k}^1+K_{h,k}^2)u(t)=\frac{\ell}{|B_h|}\int_{-h}^{\log T_-(t)}+\int_{\log T_+(t)}^h u(t+T)
\frac{\sin(k\pi e^t\alpha(T))}{\pi ke^{t}\alpha(T)}e^{-T}\alpha(T)dT,\\
K^3_{h,k}u(t)=0 
\end{gathered}
\end{equation} 
while if $k=0$, 
\begin{equation}\label{k=0}
\begin{gathered}
(K^1_{h,0}+K_{h,0}^2)u(t)=|B_h|^{-1}\ell\int_{-h}^{\log T_-(t)}+\int_{\log(T_+(t))}^h u(t+T)\alpha(T)e^{-T}dT\\ 
 K_{h,0}^3u(t)=|B_h|^{-1}\ell\int_{\log T_-(t)}^{\log T_+(t)}u(t+T)e^{-T-t}dT.
\end{gathered}
\end{equation}
The obvious similar expression  holds when $e^t\sinh(h)\leq \ell/2$.

\section{Essential spectrum of $K_h$ on $L^2(M)$}

Recall that $K_h$ is a self-adjoint bounded operator on $L^2(M,d\nu_h)$, with norm equal to $1$. Moreover, $1\in {\rm Spec}(K_h)$. In this section we show that the essential spectrum of $K_h$ is well separated from $1$.

\begin{theorem}\label{essspec}
The essential spectrum of $K_h$ acting on $L^2(M,d\nu_h)$ is given by the interval 
\[I_h:=\left[\frac{h}{\sinh(h)}A,\frac{h}{\sinh(h)}\right].\]
with $A:=\min_{x>0}\frac{\sin(x)}{x}$.
\end{theorem}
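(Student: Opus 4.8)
The strategy is to show that $K_h$ differs from a model operator in the cusp by a compact operator, and then compute the spectrum of the model directly. First, I would localize: write $K_h = \chi_0 K_h \chi_0 + (K_h - \chi_0 K_h \chi_0)$ where $\chi_0$ is a smooth cutoff supported in a compact region plus a bounded portion of the cusp, equal to $1$ off the deep cusp. Since $M$ minus the deep cusp is relatively compact and $K_h$ has a bounded (in fact $O(h)$-scale) integral kernel that is continuous there, the commutator-type remainder and the compactly supported piece contribute only to the discrete spectrum / do not affect $\sigma_{\mathrm{ess}}$; the essential spectrum is therefore that of $K_h$ acting on $L^2$ of an exact cusp $\{t > T_0\}$. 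A cleaner way to phrase this: using Weyl's criterion, $\lambda \in \sigma_{\mathrm{ess}}(K_h)$ iff there is a singular Weyl sequence, and such sequences can be pushed out to $t \to \infty$, so only the behavior deep in the cusp matters.

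Next, in the deep cusp I would use the Fourier decomposition in $y$ from Section 2: $K_h = \bigoplus_k K_{h,k}$ acting on the weighted spaces. The key observation from \eqref{knot=0} and \eqref{k=0} (and Remark \ref{remarkvolume}, giving $|B_h(t,y)| = 2\ell\sinh(h)e^{-t}(1 + O(e^{-2t}/\sinh^2 h))$) is that, after conjugating by the unitary $L^2(d\nu_h) \to L^2(dt\,dy)$ intertwiner (which involves the factors $e^{-t}$ and $|B_h|/Z_h$), each $K_{h,k}$ becomes, up to an $O(e^{-ct})$ error that is compact, a \emph{translation-invariant} convolution operator in $t$ for $k = 0$, and an operator whose symbol involves $\frac{\sin(k\pi e^t \alpha(T))}{k\pi e^t \alpha(T)}$. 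For $k \neq 0$, as $t \to \infty$ the argument $k\pi e^t\alpha(T) \to \infty$ and one must extract the contribution of these sectors carefully — this is where the constant $A = \min_{x>0}\frac{\sin x}{x}$ enters. For $k = 0$, as $t\to\infty$ one has $T_\pm(t) \to \cosh(h) \pm \sinh(h) = e^{\pm h}$, so $\log T_-(t) \to -h$, $\log T_+(t) \to h$, and the $K^1+K^2$ piece degenerates while $K^3_{h,0}u(t) \to |B_h|^{-1}\ell \int_{-h}^{h} u(t+T)e^{-T-t}dT$, which after normalization is convolution by $\frac{h}{\sinh h}\cdot\frac{1}{2h}\mathbf{1}_{[-h,h]}$-type kernel whose Fourier multiplier is $\frac{h}{\sinh h}\cdot\frac{\sin(h\xi)}{h\xi}$ — wait, more precisely one computes the multiplier of the limiting convolution operator and finds its range is exactly $\left[\frac{h}{\sinh h}A, \frac{h}{\sinh h}\right]$.

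Concretely I would: (a) show $\sigma_{\mathrm{ess}}(K_h) = \sigma_{\mathrm{ess}}(K_h|_{\text{cusp}})$ by the cutoff/compactness argument; (b) diagonalize in $k$ and prove each $k\neq 0$ block contributes essential spectrum inside $I_h$ (using that $\frac{h}{\sinh h}A$ is attained as a limiting multiplier value and that all these blocks are $\|\cdot\| \le \frac{h}{\sinh h}$, which follows since $\|K_{h,k}\| \le \|K_h\|$ localized); (c) for the $k = 0$ block, after the unitary normalization, identify it modulo compacts with a Fourier multiplier operator in $t$ with symbol $m_h(\xi) = \frac{h}{\sinh h} \cdot \frac{\sinh(h) }{\text{(appropriate expression)}}$ whose closed range is $[\frac{h}{\sinh h}A, \frac{h}{\sinh h}]$ — computing this range reduces to analyzing $\xi \mapsto \re\!\int_{-h}^h e^{-i\xi T} \frac{e^{-T}}{\text{norm}}\,dT$ and recognizing $\min_{x>0}\frac{\sin x}{x}$; (d) combine: the union over all sectors of the (closures of) the spectra is $I_h$, and conversely everything is contained in $I_h$, with no isolated points since the multipliers are continuous and non-constant.

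The main obstacle is step (c) together with the bookkeeping of the normalization: the measure $d\nu_h$ weights by $|B_h(t,y)|$ which is only asymptotically $2\ell\sinh(h)e^{-t}$, so the conjugated operator is only asymptotically translation-invariant, and one must argue the difference is genuinely compact on $L^2(dt)$ (it decays like $e^{-2t}$ but is not finite rank — a Rellich-type argument on the symbol, or writing it as (decaying function)$\times$(smoothing) works). A secondary subtlety is the $k\neq 0$ sectors: one needs that they do not produce essential spectrum \emph{outside} $I_h$, in particular not below $\frac{h}{\sinh h}A$; this follows because the relevant scalar kernels, after normalization, are convolution-like with total mass controlled, but making the lower bound $\ge \frac{h}{\sinh h}A$ uniform over $k$ requires the elementary inequality $\int f(T)\cos(\phi(T))e^{-T}dT \ge A \int f(T) e^{-T} dT$ type estimate traced back to $\frac{\sin x}{x} \ge A$. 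Finally, I would double-check that $A > -1$ (true: $\min_{x>0}\frac{\sin x}{x} \approx -0.217$) so that $I_h \subset (-1,1]$ strictly, consistent with part ii) of the Main Theorem.
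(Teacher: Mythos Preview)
Your overall strategy (localize to the cusp, identify a model Fourier multiplier, read off its range) is the paper's strategy, and your computation of the $k=0$ limiting multiplier as $\xi\mapsto\frac{\sin(h\xi)}{\sinh(h)\,\xi}$ with range $I_h$ is exactly what appears there. But the paper handles the two obstacles you flag in a different and cleaner way, and this difference is worth knowing.

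First, the paper does \emph{not} try to prove that the difference between $K_h$ and its asymptotic model is compact. Instead it uses a small-norm perturbation together with a limiting argument: for each large $t_0$ the compactly supported pieces of the decomposition are Hilbert--Schmidt (hence compact), so $\sigma_{\mathrm{ess}}(\tilde K_h)=\sigma_{\mathrm{ess}}(\indic_{[t_0,\infty)}\tilde K_h\indic_{[t_0,\infty)})$ for every $t_0$; then one replaces this cusp-localized operator by a rectangle-averaging model $T_h$ and shows, via Schur and the asymptotic \eqref{asymptvolBh}, that the \emph{norm} of the difference is $O(h^{-2}e^{-2t_0})$. Letting $t_0\to\infty$ squeezes the essential spectrum into $I_h$. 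This avoids having to prove any delicate compactness of a decaying but infinite-rank error.

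Second, and more importantly, the paper sidesteps the $k\neq 0$ sectors entirely. The model $T_h$ averages over the full rectangle $\{|t'-t|<h,\ y'\in\rr/\ell\zz\}$, so after Fourier decomposition in $y$ only the $k=0$ mode survives: all $K_{h,k}$ with $k\neq 0$ are absorbed into the $O(h^{-2}e^{-2t_0})$ norm error. Thus the constant $A=\min_{x>0}\frac{\sin x}{x}$ arises solely from the range of the one-dimensional Fourier multiplier $\frac{\sin(h\xi)}{\sinh(h)\,\xi}$ on the $k=0$ block, not from any analysis of the $k\neq 0$ kernels. Your plan to bound each $k\neq 0$ block separately and then assemble the direct sum is workable in principle, but it requires uniformity in $k$ (discrete spectra of infinitely many blocks can accumulate and become essential spectrum of the sum), and your sketch does not address this. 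The rectangle trick makes the whole issue disappear.

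For the reverse inclusion $I_h\subset\sigma_{\mathrm{ess}}$, the paper writes down explicit Weyl sequences $u_n(t)=2^{-n/2}e^{i\lambda t}\indic_{[2^n,2^{n+1}]}(t)$ pushed into the cusp, which is the concrete form of what you call ``pushing singular sequences to $t\to\infty$''.
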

\begin{proof} The operator $K_h$ acting on $L^2(M,d\nu_h)$ is unitarily equivalent to the operator 
\[\til{K}_h: f\to \til{K}_hf(m):=\frac{1}{|B_h(m)|^{\demi}}\int_{B_h(m)}f(m')\frac{1}{|B_h(m')|^\demi}dv_g(m') \]
acting on $L^2(dv_g)$.  
Now, using $(t,y)$ variables in the cusp, let us take $t_0\gg 0$ arbitrarily large and let 
$\chi_{t_0}(t,y):=1-\indic_{[t_0,\infty)}(t)$ which is compactly supported.
Clearly, from the fact that $K_h$ propagates supports at distance at most $h$, 
we can write 
\[\til{K_h}=\indic_{[t_0,\infty)}\til{K}_h\indic_{[t_0,\infty)}+\chi_{t_0}\til{K}_h\chi_{t_0}+\chi_{t_0}\til{K}_h\indic_{[t_0,t_0+h]}+\indic_{[t_0,t_0+h]}\til{K}_h\chi_{t_0}.\] 
Since $\chi_{t_0},\chi_{t_0\pm h}$ are compactly supported, it is obvious that the integral kernel of the last three operators is   
in $L^2(M\x M; dv_g\otimes dv_g)$ and so these operators are Hilbert-Schmidt and thus compact. 
Now by a standard theorem, the essential spectrum of $\til{K}_h$ is then the essential spectrum of $\indic_{[t_0,\infty)}\til{K}_h\indic_{[t_0,\infty)}$ 
for all large $t_0\gg 0$. Let us consider the operator $T_h$ on $L^2(M,dv_g)$ defined by
\[T_hu(t,y)=\frac{1}{|R_h(t)|^\demi}\indic_{[t_0,\infty)}(t)\int_{y-\frac\ell 2}^{y+\frac \ell 2}\int_{t-h}^{t+h}\indic_{[t_0,\infty)}(t')\frac{u(t',y')}{|R_h(t')|^\demi}e^{-t'}dt'dy'.\] 
where $|R_h(t)|=2\ell e^{-t}\sinh(h)$ is the measure of the rectangle $t'\in [t-h,t+h]$ as in Remark \ref{remarkvolume}.
If $e^{t_0}$ is chosen much bigger than $h^{-1}$, we have from Remark \ref{remarkvolume} that 
$|B_h(t)|=|R_h(t)|(1+O(h^{-2}e^{-2t}))$ which implies from Schur's Lemma that the operator 
$T_h-(1-\chi_{t_0})\til{K}_h(1-\chi_{t_0})$ has $L^2$ norm bounded by  $Ch^{-2}e^{-2t_0}$ for some $C>0$.
Therefore this norm can be made as small as we like by letting $t_0\to \infty$ and  
it remains to study the essential spectrum of $T_h$ when $t_0$ is chosen very large. 
Remark that $T_h$ can be decomposed in Fourier modes in the $S^1$ variable $y$ like we did for $K_h$ in the cusp, 
and only the component corresponding to the constant eigenfunction of $S^1$ is non-vanishing.  
Therefore the norm of $T_h$ is bounded by the norm of the following operator acting on $L^2(\rr,e^{-t}dt)$ 
\[ f \to u(t)=\frac{\indic_{[t_0,\infty)}(t)}{2\sinh(h)e^{-t/2}}\int_{t-h}^{t+h}\indic_{[t_0,\infty)}(t')f(t')e^{-t'/2}dt'.\]
or equivalently 
\[f \to u(t)=\frac{\indic_{[t_0,\infty)}(t)}{2\sinh(h)}\int_{t-h}^{t+h}\indic_{[t_0,\infty)}(t')f(t')dt'\]
acting on $L^2(\rr,dt)$. This can also be written as a composition $\indic_{[t_0,\infty)}A_h\indic_{[t_0,\infty)}$ where $A_h$ is the operator which is a  Fourier multiplier on $\rr$
\[A_h= \mc{F}^{-1}\frac{\sin(h\xi)}{\sinh(h)\xi}\mc{F}.\]  
>From the spectral theorem, it is clear that this operator has only continuous spectrum and its
spectrum is given by the range of the smooth function $\xi\to \sin(h\xi)/\sinh(h)\xi$, i.e. by
$I_h$, and its operator norm is $h/\sinh(h)$. Suppose now that $\lambda\in {\rm Spec}_{\rm ess}(\til{K}_h)$ then $\lambda$ belongs to the  spectrum of $\indic_{[t_0,\infty[}\til{K}_h\indic_{[t_0,\infty[}$ for all $t_0$. If the spectrum of $\indic_{[t_0,\infty[}A_h\indic_{[t_0,\infty[}$ is included in $I_h$, then  letting $t_0\to \infty$ 
implies that $\la\in I_h$, by the norm estimate on the difference of the two operators.
Since 
\[   \frac{h}{\sinh(h)}A||f||^2_{L^2}\leq \cjg A_h\indic_{[t_0,\infty[}f,\indic_{[t_0,\infty[}f\cjd \leq \frac{h}{\sinh(h)} 
||f||^2_{L^2},\]
the spectrum of $\indic_{[t_0,\infty[}A_h\indic_{[t_0,\infty[}$ is included in $I_h$, we just have to prove the other inclusion. 
To prove it is exactly $I_h$, we have to construct Weyl sequences for $\til{K}_h$. 
Consider the orthonormalized sequence $(u_n)_{n\in\nn}$ of $L^2$ orthonormalized functions 
\[u_n(t):= 2^{-n/2}e^{i\la t}\indic_{[2^n,2^{n+1}]}(t) ,\quad n\in \nn\] 
then a straightforward computation shows that 
\[ \left|\left|\left(\indic_{[2^n-1,\infty)}A_h\indic_{[2^n-1,\infty)}
-\frac{\sin(\la h)}{\la \sinh(h)}\right)u_n\right|\right|_{L^2(\rr,dt)}=O(2^{-n/2}).\]
But also $\til{K}_hu_n=\indic_{[2^{n}-1,\infty)}\til{K}_h(\indic_{[2^{n}-1,\infty)}u_n)$ and thus by taking $n$ large  and using the norm estimate on $\indic_{[2^{n}-1,\infty)}\til{K}_h\indic_{[2^{n}-1,\infty)}-T_h$ with $t_0:=2^{n}-1$ in the definition of $T_h$, we obtain
\[\left|\left|\left(\til{K}_h-\frac{\sin(\la h)}{\la \sinh(h)}\right)u_n\right|\right|_{L^2}\leq C(2^{-n/2}+ h^{-2}e^{-2^{n+1}})\] 
and letting $n\to \infty$, we can apply the Weyl criterion to deduce that $I_h$ is the essential spectrum of $\til{K}_h$. 
\end{proof}

\section{Spectral gap of order $h^2$ for $K_h$ on $L^2$}

In this section, we show the existence of a spectral gap of order $h^2$ for 
$K_h$ on acting on $L^2(M,d\nu_h)$. Recall that $d\nu_h(m)=\frac{|B_h(m)|}{Z_h}dv_g$ where
$Z_h$ is a positive constant such that this $d\nu_h$ is a probability measure. In particular, 
in our case $h^2/C<Z_h<Ch^2$ for some $C>0$. 

Let us first show that the bottom of the spectrum of $K_h$ is
uniformly bounded away from $-1$.
\begin{proposition}\label{bottom}
There exists $\delta>0$, $h_0>0$ such that for all $0<h\leq h_0$
\begin{equation}\label{trouen-1}
{\rm Spec}(K_h)\cap[-1,-1+\delta]=\emptyset.
\end{equation}
\end{proposition}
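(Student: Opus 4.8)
The plan is to bound $\langle K_h f, f\rangle_{L^2(d\nu_h)}$ from below by something strictly larger than $-\|f\|^2$, uniformly in $h$ small. Since $K_h$ is self-adjoint with $\|K_h\|=1$, this is equivalent to showing that $\langle (I+K_h)f,f\rangle \geq \delta \|f\|^2$. The key observation is that $I + K_h$ is a \emph{positive averaging} operator: writing the quadratic form explicitly, $\langle (I+K_h)f,f\rangle_{L^2(d\nu_h)}$ equals, up to the constant $Z_h^{-1}$, an expression of the form $\int |f(m)|^2 |B_h(m)| dv_g(m) + \int\!\!\int_{d(m,m')<h} f(m)\overline{f(m')}\, dv_g(m)dv_g(m')$. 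The cross term is controlled in modulus by $\frac12\int\!\!\int_{d(m,m')<h}(|f(m)|^2+|f(m')|^2)$, which equals $\int |f(m)|^2|B_h(m)|dv_g(m)$ — so the naive bound only gives $\langle(I+K_h)f,f\rangle \geq 0$, i.e.\ the trivial bound $\mathrm{Spec}\subset[-1,1]$. To gain the gap $\delta$ one must exploit that the triangle inequality above is far from sharp: the balls $B_h(m)$ and $B_h(m')$ overlap substantially when $d(m,m')<h$, so a ``doubling/overlap'' argument forces a definite amount of positivity.

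Concretely, I would proceed as follows. First, reduce to two regions: the compact core $\{t\le t_0\}$ (together with a fixed collar) and the cusp $\{t\ge t_0\}$, using a partition of unity and the finite propagation property ($K_h$ moves supports by at most $h$), so cross terms between the two regions are harmless for $h$ small. On the compact core, the manifold is uniformly non-degenerate and one can quote (or reprove by a standard local argument) the estimate $\langle(I+K_h)f,f\rangle \gtrsim \|f\|^2$: this follows because for any two nearby points there is a third point whose $h$-ball contains both, giving a lower bound of Lebeau--Michel type \cite{LebMi}. Second — and this is the real work — handle the cusp. There, using the Fourier decomposition in $y$ from Section 2 and the explicit formulas \eqref{knot=0}--\eqref{k=0}, the operator $K_h$ restricted to the cusp decomposes as $\bigoplus_k K_{h,k}$, and one must show $I + K_{h,k} \geq \delta$ for each $k$, uniformly in $k$ and $h$. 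For $k\ne 0$ the symbol already carries a $\sin(k\pi e^t\alpha(T))/(k\pi e^t\alpha(T))$ damping, and the $k=0$ sector reduces essentially to the Fourier multiplier $A_h$ from Theorem \ref{essspec}, whose symbol $\sin(h\xi)/(\sinh(h)\xi)$ has range $[\frac{h}{\sinh h}A, \frac{h}{\sinh h}]$ with $A>-1$; hence $1 + \sin(h\xi)/(\sinh h\,\xi) \ge 1 + \frac{h}{\sinh h}A \ge 1+A-o(1) > 0$ uniformly. The terms $K^2_h$ (below the self-tangency threshold) and the remainder estimates from Remark \ref{remarkvolume} are lower-order and absorbed.

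The main obstacle I anticipate is the cusp analysis for intermediate $t$, near the transition $t \approx \log(\ell/2\sinh h)$, where the ball changes topology: there the simple Fourier-multiplier picture is not exact, and one must control the ``geometric'' operators $K^j_{h,k}$ directly. The strategy there is to bound the modulus of the off-diagonal part of each $K^j_{h,k}$ by a fraction strictly less than $1$ times the diagonal, exploiting that $|B_h(t)| \ge C h e^{-t}$ (Remark \ref{remarkvolume}) and that the $T$-integration ranges miss a neighborhood of $T=0$ of definite relative size, so the overlap argument still yields a uniform constant. Assembling the compact and cusp estimates, choosing $t_0$ large enough that the error terms in $|B_h|/|R_h| = 1+O(h^{-2}e^{-2t})$ are small, gives $\langle(I+K_h)f,f\rangle \ge \delta\|f\|^2_{L^2(d\nu_h)}$ with $\delta$ independent of $h\in(0,h_0]$, which is exactly \eqref{trouen-1}.
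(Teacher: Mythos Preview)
Your route is considerably more elaborate than the paper's, and as written it has a real gap. The paper does \emph{not} split into core/cusp or invoke the Fourier decomposition at all for this proposition. Instead it uses a single covering argument that works uniformly on all of $M$: write
\[
\mathcal{G}_h(f):=\langle(1+K_h)f,f\rangle_{L^2(d\nu_h)}=\frac{1}{2Z_h}\int_{d(m,m')\le h}(f(m)+f(m'))^2\,dv_g(m)\,dv_g(m'),
\]
choose a cover $M=\bigcup_j\omega_j$ by geodesic balls of diameter $h/4$ with overlap multiplicity $\le N$ (independent of $h$), and use the elementary identity
\[
\int_{\omega_j\times\omega_j}(f(m)+f(m'))^2\,dv_g\,dv_g
=2\,\mathrm{Vol}_g(\omega_j)\int_{\omega_j}|f|^2\,dv_g+2\Big|\int_{\omega_j}f\,dv_g\Big|^2
\ge 2\,\mathrm{Vol}_g(\omega_j)\int_{\omega_j}|f|^2\,dv_g.
\]
Since $\mathrm{diam}(\omega_j)\le h/4$, each product $\omega_j\times\omega_j$ sits inside $\{d(m,m')\le h\}$, so summing gives
\[
\mathcal{G}_h(f)\ge \frac{1}{N Z_h}\sum_j \mathrm{Vol}_g(\omega_j)\int_{\omega_j}|f|^2\,dv_g.
\]
The only geometric input needed is $\mathrm{Vol}_g(\omega_j)\ge C\max_{m\in\omega_j}|B_h(m)|$ with $C$ uniform in $j$ and $h$; this holds both where the injectivity radius is $\ge h$ (both sides are $\sim h^2$) and deep in the cusp (both sides are $\sim he^{-t}$, by Remark~\ref{remarkvolume}). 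That immediately yields $\mathcal{G}_h(f)\ge (C/N)\|f\|^2_{L^2(d\nu_h)}$.

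The gap in your argument is in the $k\ne 0$ cusp analysis. The sinc damping you invoke gives only $\|K_{h,k}\|\le 1-\epsilon\min(k^2e^{2t}h^2,1)$ (this is exactly Lemma~\ref{contraction}), which for $t$ between your fixed $t_0$ and the overlap threshold $\log(\ell/2\sinh h)$ yields merely $\|K_{h,k}\|\le 1-O(h^2)$. That bounds the spectrum of $K_{h,k}$ inside $[-1+O(h^2),1-O(h^2)]$, hence $I+K_{h,k}\ge O(h^2)$, not $\ge\delta$ uniform in $h$. To salvage your scheme you would need the ``compact core'' to swallow the entire region $t\le \log(\ell/2\sinh h)$, i.e.\ an $h$-dependent compact piece of diameter $\sim|\log h|$, and then you must check that the Lebeau--Michel constant you quote does not degenerate there. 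The covering argument above bypasses all of this: it is indifferent to the ball topology change and requires no Fourier decomposition.
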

\begin{proof}
This amounts to prove an estimate of  the form 
\[\mc{G}_h(f)\geq \delta ||f||^2_{L^2(M,d\nu_h)}\]
where 
\[\mc{G}_h(f)=\cjg (1+K_h)f,f\cjd_{L^2(M,d\nu_h)}=
\frac{2}{Z_h}\int_{d(m,m')\leq h}(f(m)+f(m'))^2 dv_g(m)dv_g(m).\]
We proceed as in \cite{DiaLebMi} and consider a covering $\cup_j\omega_j=M$ of $M$ by geodesic
balls of diameter $h/4$ and such that for any $j$, the number of $k$ such that $\omega_j\cap\omega_k\neq\emptyset$ 
is less than $N$ for some $N$ independent of $h$. Then, using that the volume of $|B_h(m)|$
is constant of order $h^2$ when $t(m)\in [t_0,\log(\ell/2\sinh(h))]$ (for some $t_0>0$ independent of $h$), 
we deduce easily that ${\rm Vol}_g(\omega_j)>C\max_{m\in\omega_j}|B_h(m)|$ 
for some uniform $C>0$ when $\omega_j$ has center 
in $\{t\leq \log(2/\ell\sinh(h))\}$, 
while when it has center $m_j$ such that $t(m_j)\geq  \log(2/\ell\sinh(h))$, we have 
${\rm Vol}_g(\omega_j)\geq Ce^{-t_j}h\geq C'\max_{m\in \omega_j} |B_h(m)|$ 
for some $C,C'>0$ uniform in $h$, by using \eqref{asymptvolBh} . As a consequence, we obtain
\[\begin{split}
\mc{G}_h(f)&\geq\frac{1}{2NZ_h} \sum_j\int_{\omega_j\times\omega_j,d(m,m')<h}(f(m)+f(m'))^2dv_g(m)dv_g(m')\\
&\geq\frac{1}{2NZ_h} \sum_j\int_{\omega_j\times\omega_j}((f(m)+f(m'))^2dv_g(m)dv_g(m')\\
&\geq\frac{1}{NZ_h} \sum_j{\rm Vol}_g(\omega_j)\int_{\omega_j}|f(m)|^2dv_g(m)\\
\mc{G}_h(f)&\geq \frac{C}{N} \int_{M}|f(m)|^2\frac{|B_h(m)|}{Z_h}dv_g(m)=\frac{C}{N}||f||^2_{L^2(M,d\nu_h)}
\end{split}\]
and this achieves the proof.
\end{proof}

Let us define the following functionals on $L^2(M,d\nu_h)$
\begin{equation}\label{defVf}
V_h(f):=||f||_{L^2(M,d\nu_h)}^2-\cjg f,1\cjd_{L^2(M,d\nu_h)}=\demi\int_{M\x M}(f(m)-f(m'))^2d\nu_h(m)d\nu_h(m')
\end{equation}
\begin{equation}\label{defEf}
\mc{E}_h(f):=\cjg(1-K_h)f,f\cjd_{L^2(M,d\nu_h)}=\frac{1}{2Z_h}\int_{d(m,m')<h}(f(m)-f(m'))^2
dv_g(m)dv_g(m').
\end{equation}

The spectral gap $g(h)$ can be defined as the largest constant such that 
\[
V_h(f)\leq \frac 1{g(h)}\mc{E}_h(f) , \quad \forall f\in L^2(M,d\nu_h)
\]
with the convention $g(h)=\infty$ if $1$ has multiplicity greater than $1$.\\

For the convenience of the reader, let us first give a brief summary of the method we are going to use to 
obtain a lower bound on $g(h)$: we will split the surface into two surfaces with boundary, one of which is 
compact (call it $M_0$), the other being an exact cusp (call it $E_0$), 
then we shall double them along their respective boundary to obtain $X=M_0\sqcup M_0$ and $W=E_0\sqcup E_0$,
and extend smoothly the metric $g$ from $M_0$ to $X$ and from $E_0$ to $W$ in such a fashion that $W$ is a surface of 
revolution $\rr\x (\rr/\ell\zz)$ with two isometric cusps near infinity. We will reduce the problem of 
getting a lower bound on  $g(h)$ to that of obtaining a lower bound on the spectral gap of both
random walk operators on $X$ and $W$. The compact case $X$ has been studied in \cite{LebMi}, and the main
difficulty will be to analyze $W$, which will be done in the next section. 
To that aim, we will use Fourier decomposition in the $\rr/\ell\zz$ variable and show 
that only the $0$-Fourier mode plays a serious role, 
then we will reduce the analysis of the operator acting on the $0$-Fourier mode to the analysis 
of a random walk operator with an exponentially decaying measure density on the real line, 
which is a particular case of the setting studied in \cite{GuMi}.\\

Let us now prove the
\begin{theorem}\label{gaph^2}
There exists $C<1/6$ and $h_0>0$ such that for any $h\in]0,h_0]$ 
\[
Ch^2\leq g(h)\leq 1-\frac{h}{\sinh(h)} =\frac{h^2}{6}+O(h^4).
\]
In particular, $1$ is a simple eigenvalue of $K_h$.
\end{theorem}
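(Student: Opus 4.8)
The plan is to prove the two bounds separately; the lower bound will also give the simplicity of the eigenvalue $1$. The upper bound is immediate from Theorem~\ref{essspec}: since $h/\sinh(h)<1$ for $h>0$, the right endpoint $h/\sinh(h)$ of the essential spectrum $I_h$ lies strictly below $1$, while $K_h1=1$ gives $1\in{\rm Spec}(K_h)$; hence $1$ is an isolated point of ${\rm Spec}(K_h)$ and $I_h\subset{\rm Spec}(K_h)\setminus\{1\}$, so $g(h)\leq 1-h/\sinh(h)$. Expanding $\sinh(h)=h+\tfrac{h^3}{6}+O(h^5)$ gives $1-h/\sinh(h)=\tfrac{h^2}{6}+O(h^4)$, which is $<\tfrac{h^2}{6}$ for $h$ small; in particular any $C>0$ for which $Ch^2\leq g(h)$ holds on $]0,h_0]$ is automatically $<1/6$, so it remains only to produce some such $C$.

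\textbf{Reduction to a Poincar\'e inequality.} By the variational definition of $g(h)$, the bound $g(h)\geq Ch^2$ is equivalent to
\[
\mc{E}_h(f)\geq Ch^2\,V_h(f),\qquad f\in L^2(M,d\nu_h),
\]
with $\mc{E}_h,V_h$ as in \eqref{defEf}, \eqref{defVf}. This says precisely that $1-K_h\geq Ch^2$ on the orthogonal complement of the constants, which together with $K_h1=1$ forces $1$ to be a simple eigenvalue (any eigenfunction of eigenvalue $1$ orthogonal to $1$ would annihilate the left side while the right side stays positive); so all remaining claims follow from this one inequality, and the two definitions of $g(h)$ agree.

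\textbf{Doubling, localization and the two model problems.} Following the scheme announced in the introduction, write $M=M_0\cup E_0$, with $M_0$ compact with boundary and $E_0=\{t>T\}$ an exact cusp ($T=O(1)$), overlapping in a collar of width $\gg h$; double to a closed surface $X=M_0\sqcup_\partial M_0$ and a surface of revolution $W=E_0\sqcup_\partial E_0\cong\rr\x(\rr/\ell\zz)$ with two isometric cusps, extending $g$ smoothly across the gluing locus, and let $K_h^X,K_h^W$ be the corresponding random walk operators (with renormalizing constants of order $h^2$, as for $Z_h$). Choosing $\chi_0,\chi_1\in C^\infty(M)$ with $\chi_0^2+\chi_1^2\equiv1$ and $\supp\chi_i$ in $M_0$, resp.\ $E_0$, the IMS localization identity for the quadratic form $\mc{E}_h$, together with $|\chi_i(m)-\chi_i(m')|\leq Ch$ on $\{d(m,m')<h\}$, yields $\mc{E}_h(f)\geq\mc{E}_h(\chi_0f)+\mc{E}_h(\chi_1f)-C'h^2\|f\|_{L^2(d\nu_h)}^2$; since $K_h$ moves supports by at most $h$, $\mc{E}_h(\chi_0f)$ and $\mc{E}_h(\chi_1f)$ are comparable (ratios of the relevant $Z_h$'s being bounded above and below) to $\mc{E}_h^X(\chi_0f)$, resp.\ $\mc{E}_h^W(\chi_1f)$, and a standard chaining argument across the collar bounds $V_h(f)$ by $C\big(V_h^X(\chi_0f)+V_h^W(\chi_1f)\big)+Ch^2\|f\|^2$. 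The Poincar\'e inequality on the compact surface $X$ is the spectral-gap lower bound of Lebeau--Michel \cite{LebMi}. On $W$, Fourier-decompose in the circle variable as in \S\ref{secondexp}, so $K_h^W=\bigoplus_k K_{h,k}$: for $k\neq0$, formulas \eqref{knot=0} and the oscillatory factor $\sin(k\pi e^t\alpha(T))/(k\pi e^t\alpha(T))$ ensure $K_{h,k}$ has no spectrum in $(1-ch^2,1]$; for $k=0$, formulas \eqref{k=0} together with the volume asymptotics of Remark~\ref{remarkvolume} identify $K_{h,0}$ in the cusp, in the coordinate $t$, with a one-dimensional averaging operator against the exponentially decaying density $e^{-t}$ on $\rr$ --- a case of the framework of \cite{GuMi} --- for which the $h^2$ Poincar\'e inequality is proved in the next section. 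Summing over $k$ (an orthogonal decomposition) gives $\mc{E}_h^W\geq ch^2V_h^W$, and reassembling the pieces completes the proof.

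\textbf{Main obstacle.} The crux is the cusp estimate on $W$. Because a geodesic ball of radius $h$ changes topology high in the cusp --- it wraps onto itself, cf.\ \eqref{ballh} --- the operator $K_h$ there is \emph{not} well approximated by a function of $\Delta_g$, so one cannot invoke an eigenvalue localization as in the compact case; instead one must handle the reduced $0$-mode operator directly, controlling uniformly the transition region $e^t\sinh(h)\sim\ell/2$ and matching $|B_h(t)|=2\ell\sinh(h)e^{-t}+O(e^{-3t}/\sinh(h))$. The doubling and localization also require care, since $d\nu_h$ has density of size $h^2$ on $M_0$ but $\sim\sinh(h)e^{-t}$ deep in the cusp, so the reference measures entering the two model problems must be reconciled.
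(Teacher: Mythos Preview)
Your overall strategy---doubling to a closed surface $X$ and a surface of revolution $W$, invoking Lebeau--Michel on $X$, and Fourier decomposition plus the one-dimensional theory of \cite{GuMi} on $W$---is exactly the paper's scheme. The upper bound via Theorem~\ref{essspec} is also the paper's argument. Two points, however, are handled differently in the paper and are genuine gaps in your write-up.

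\textbf{Localization.} You use an IMS decomposition with smooth cutoffs $\chi_0^2+\chi_1^2=1$ and carry an error $-C'h^2\|f\|^2$. Even after restricting to $f\perp 1$ so that $\|f\|^2=V_h(f)$, closing the argument then requires the model-problem constants (from \cite{LebMi} and from Proposition~\ref{casrevol}) to dominate the IMS and ``chaining'' constants, which you do not justify. The paper avoids this entirely: it splits $V_h$ and $\mc{E}_h$ by sharp regions (the identity $V_h^{[a,b]}=V_h^{[a,c]}+V_h^{[c,b]}+2I_h^c$ and the elementary bound \eqref{Ehcontrol} have \emph{no} $h^2\|f\|^2$ error), and then, rather than multiplying by cutoffs and extending by zero, it takes the \emph{symmetric extension} $f^s$ of $f|_{E_0}$ (resp.\ $f|_{M_0}$) to the doubled manifold and compares $\mc{E}_h^{[0,\infty)}(f)$ with $\mc{E}^W_{h/2}(f^s)$ and $V_h^{[0,\infty)}(f)$ with $V^W_{h/2}(f^s)$ directly. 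This is what makes the reduction to the two model problems clean.

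\textbf{The zero mode on $W$.} Your sentence ``identify $K_{h,0}$ in the cusp \ldots\ with a one-dimensional averaging operator against the exponentially decaying density $e^{-t}$ on $\rr$'' is too quick. The surface $W$ has \emph{two} cusp ends and a compact middle; the paper splits $W=W_1\cup W_2\cup W_3$ (middle and two cusps) and reduces each piece separately to a one-dimensional random walk in the class of \cite{GuMi}---with density $e^{-|t|}$ for the cusp pieces (Lemma~\ref{secondezone}) and, via a periodic extension trick, density $e^{-|t|/2}$ for the middle piece (Lemma~\ref{premierezone}). The single 1D reduction you suggest does not cover the transition region $e^t\sinh(h)\sim\ell/2$ that you yourself flag as the main obstacle.
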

\begin{proof}
The upper estimate on $g(h)$ is a corollary of Theorem \ref{essspec} (using the Weyl sequences in the proof). 
Let us then study the lower bound, which is more involved.
The surface $M$ decomposes into a disjoint union $M=M_0\cup E_0$ with $M_0$ compact and $E_0$ isometric to the cusp 
$\simeq \{(t,y)\in (t_0-1,\infty)\x \rr/\ell \zz \} $ with metric
$dt^2+e^{-2t}dy^2$ (see Figure \ref{fig:manifold}).
In particular, the regions $M_0$ is compact with diameter independent of $h$.
Let us extend the function $m\mapsto t(m)$ smoothly to the whole
surface $M$ so that $0<t(m)<t_0-1$ for all $m\in M_0$.

\begin{figure}
\centerline{
\input{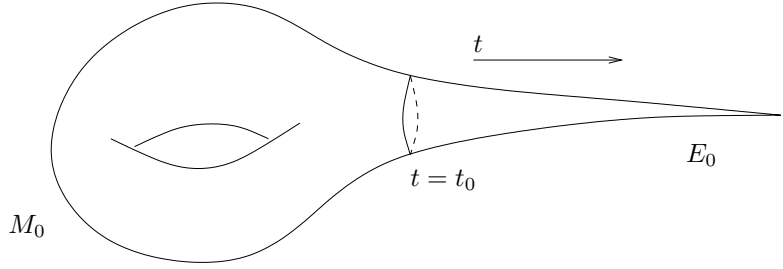}}
\caption{\label{fig:manifold} The decomposition $M = M_0 \cup E_0$.}
\hfill
\end{figure}

We then decompose the functional $V_h(f)$ according to this splitting of $M$ and we define for $
0 \leq a<c<b\leq \infty$
\[V_h^{[a,b]}(f):=\demi\int_{t(m)\in[a,b],t(m')\in[a,b]}(f(m)-f(m'))^2d\nu_h(m)d\nu_h(m'),\] 
\[I_h^{c}(f)=\demi \int_{t(m)\in [a,c],t(m')\in[c,b]}(f(m)-f(m'))^2d\nu_h(m)d\nu_h(m').\]
One then has for $c\in (a,b)$
\begin{equation}\label{Vhf}
V_h^{[a,b]}(f)=V_h^{[a,c]}(f)+V_h^{[c,b]}(f)+2I_h^{c}(f).
\end{equation}
Let us deal with the interaction term : 
$I_h^{c}(f)=\frac{1}{\nu_h(C_{c})}\int_{s\in C_{c}} I_h^{c}(f) d\nu_h(s)$ 
where $C_{c}:=\{m\in M;  c-1<t(m)<c+1\}$ and thus 
\[I_h^{c}(f)\leq 2\int_{s\in C_{c}}\int_{\substack{t(m)\in[a,c],\\
t(m')\in[c,b]}}(f(m)-f(s))^2+
(f(s)-f(m'))^2 d\nu_h(m)d\nu_h(m')\frac{d\nu_h(s)}{\nu_h(C_{c})}\] 
which implies for $a+1\leq c\leq b-1$,
\[I_h^{c}(f)\leq \frac{2\nu_h(t(m)\in[c,b])}{\nu_h(C_{c})}V_h^{[a,c+1]}(f)+
\frac{2\nu_h(t(m)\in [a,c])}{\nu_h(C_{c})}V_h^{[c-1,b]}(f).\]
Assume now that $c$ satisfies $e^ch\leq C$ for some $C>0$ independent
of $h$.   
Since the measure $c_0\leq d\nu_h/dv_g\leq c'_0 $ in $\{t<\log(\ell/2\sinh(h))\}$ for some $c_0,c'_0>0$ and 
$ c_1 e^{-t}/h<d\nu_h/dv_g<c_2 e^{-t}/h$ in $\{t>\log(\ell/2\sinh(h))\}$ for some $c_1,c_2>0$, we immediately 
deduce (using also \eqref{Vhf}) that there exists $C>0$ such that for all $f\in C_0^\infty(M)$ and $h$ small
\[V_h^{[a,b]}(f)\leq C\Big(V_h^{[a,c+1]}(f)+e^{c-a}V_{h}^{[c-1,b]}(f)\Big).\]
Using this estimate with $c=t_0$ (which is independent on $h$), we obtain   
\begin{equation}\label{Vhsplit}
V_h(f)\leq \,  C\Big(V_{h}^{[0,t_0]}(f)+e^{t_0}V_h^{[t_0,\infty]}(f)\Big)
\end{equation}
We also notice the inequality 
\begin{equation}\label{Ehcontrol}
\mc{E}_h(f)\geq \frac{1}{4}\Big(
\mc{E}_{h}^{[0,t_0+1]}(f)+\mc{E}_h^{[t_0-1,\infty]}(f)\Big), 
\end{equation}
where, for any $a,b\in [0,\infty]$, 
\[\mc{E}_h^{[a,b]}(f):= \frac{1}{2Z_h}\int_{t(m'),t(m)\in[a,b], d(m,m')<h}(f(m)-f(m'))^2dv_g(m)dv_g(m').\]
Using the preceding observations, it  remains to prove the inequalities 
\begin{equation}\label{toprove}
\mc{E}^h_{[0,t_0]}(f)\geq Ch^2 V^h_{[0,t_0]}(f),\quad  \mc{E}^h_{[t_0-1,\infty]}(f)\geq Ch^2 V^h_{[t_0-1,\infty]}(f).
\end{equation}
where we have used the fact that $e^{t_0}$ is independant of $h$.

Let us prove the following Lemma, which will deal with the non-compact region.  
\begin{lemma}\label{t0-1}
For any $f\in L^2(M)$, the following inequality holds 
\[\mc{E}_h^{[t_0-1,\infty]}(f)\geq Ch^2 V_h{[t_0-1,\infty]}(f).\]
\end{lemma}
\begin{proof}
We are going to prove 
\[
\begin{split}
\frac 1 {Z_h}\int_{m,m'\in E_0,d(m,m')<h}&(f(m)-f(m'))^2dv_g(m)dv_g(m')\\
&\geq Ch^2\int_{m,m'\in E_0}(f(m)-f(m'))^2d\nu_h(m)d\nu_h(m').
\end{split}
\]
Recall that $E_0=[t_0-1,\infty[\times \rr/\ell\zz$ is endowed with the metric $g=dt^2+e^{-2t}dy^2$. Without loss of generality, we can assume that $t_0=1$. Let us consider the surface $W:=\rr_t\x (\rr/\ell\zz)_y$, and view $E_0$ as the subset $t>0$ of $W$.
We equip $W$ with a warped product metric extending $g$ (and then still denoted $g$) to $t\leq 0$ as follows: $g:=dt^2+e^{-2\mu(t)}dy^2$ where $\mu(t)$ is a smooth function on $\rr$ which is equal to $|t|$ in $\{t>0\}\cup \{t<-1\}$ and 
such that $e^{-\mu(t)}\geq c_0 e^{-t}$ in $t\in[-1,0]$ for some
constant $c_0>0$ (see Figure \ref{fig:cusp-double}). 
 As a consequence, there exists some constant $C>0$ such that 
\begin{equation}\label{eq:controle_densite_sym}
\forall t\in\rr,\,\frac 1 C e^{-\mu(t)}\leq e^{-\mu(-t)}\leq Ce^{-\mu(t)}
\end{equation}
We denote by $d(m,m')$ the distance for the metric $g$ on $W$, $dv_{g}$ the volume form, $|B_{h}(m)|=v_{g}(B(m,h))$ the volume of the geodesic ball of radius $h$ and center $m$ associated to this metric $g$ on $W$. 
Consider also the probability measure $d\nu^W_{h}=\frac{|B_{h}(m)|}
{Z^W_{h}}dv_{g}(m)$, where $Z^W_{h}\in [h^2/C,Ch^2]$ (for some $C>1$)
is a renormalizing constant.

\begin{figure}
\centerline{
\input{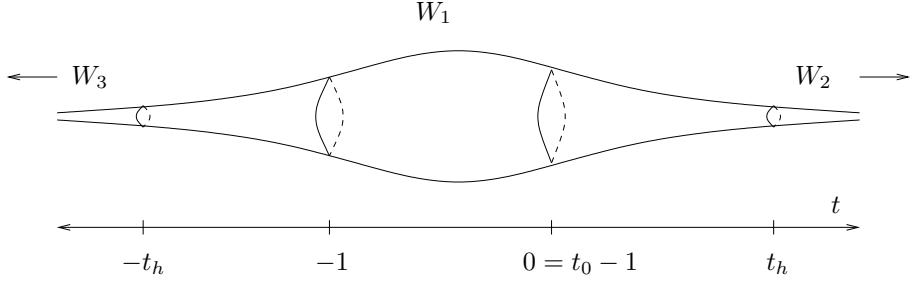}}
\caption{\label{fig:cusp-double} The surface of revolution $W$, which
  is a doubling of the cusp region $E_0 = \{ t \geq t_0 -1 =0 \}$ in
  these coordinates.  For later applications in Section \ref{S:gap-revolution}, we write $W = W_1 \cup
  W_2 \cup W_3$, with $W_2, W_3$ the regions where $| t | \geq t_h =
  \log ( \ell / 2 \sinh (h)) -1$.}
\hfill
\end{figure}

For $g\in L^2(E_0)$, let us define
\[
\mc{E}_h^W(g):=\frac 1 {Z_{h}^W}\int_{m,m'\in W,d(m,m')<h}(g(m)-g(m'))^2dv_{g}(m)dv_{g}(m')
\]
\[
V_h^W(g):=\int_{m,m'\in W}(g(m)-g(m'))^2 d\nu_{h}^W(m)d\nu_{h}^W(m').
\]
Any function $f\in L^2(E_0)$ can be extended to a function $f^s\in L^2(W)$, symmetric with respect to the involution $t\to -t$. 
Splitting $W\times W$ in four regions, we have
\be\label{4regions}
\begin{split}
\mc{E}_h^W(f^s)=&\frac{ Z_h}{Z_{h}^W}\mc{E}_h^{[0,\infty)}(f)
+2\int_{\substack{t(m)>0,t(m')<0, \\
d(m,m')\leq h}}(f^s(m)-f^s(m'))^2 dv_{g}(m)dv_{g}(m)\\
&+\int_{\substack{t(m)<0,t(m')<0,\\
d(m,m')\leq h}}(f^s(m)-f^s(m'))^2dv_{g}(m)dv_{g}(m)
\end{split}
\ee
We denote $\sigma: W\to W$ the involution $\sigma(t,y):=(-t,y)$ and use the change of variables $m\mapsto \sigma(m), m'\to \sigma(m')$ in the last term, and $m'\to \sigma(m')$ in the second term.
Using the assumptions on the metric $g$, we observe the following inclusions
\[\begin{gathered}
\{(m,m')\in W\x W; t(m)>0, t(m')>0, d(\sigma(m),\sigma(m'))\leq h\} \\
 \subset 
\{(m,m')\in W\x W; t(m)>0 , t(m')>0, d(m,m')\leq 2h\}, \,\, \textrm{ and } \end{gathered}\] 
\[\begin{gathered}
\{(m,m')\in W\x W; t(m)>0, t(m')>0, d(m,\sigma(m'))\leq h\} \\
\subset 
\{(m,m')\in W\x W; t(m)>0 , t(m')>0, d(m,m')\leq 2h\}.\end{gathered}\] 
The first inclusion comes from $e^{-\mu(t)}\geq e^{-\vert t\vert}/2$, while the second 
follows simply from $d(m,m')\leq d(m,\sigma(m'))+d(m',\sigma(m'))$
and the fact that $d(m',\sigma(m'))=2t(m')\leq h$ if $d(m,\sigma(m'))\leq h$.
Combined with \eqref{eq:controle_densite_sym} and the fact that $c\leq Z_h/Z_h^W\leq 1/c$ for some $0<c<1$, 
we see that the terms in the right hand side of \eqref{4regions} are bounded above by $C\mc{E}_{2h}^{[0,\infty)}(f)$ for some $C$, and we then deduce that for all small $h>0$
\begin{equation}\label{eh/2}
\mc{E}_{\frac{h}{2}}^W(f^s)\leq C\mc{E}_h^{[0,\infty)}(f).
\end{equation}
The proof of the following proposition is deferred to the next section.
\begin{proposition}\label{casrevol}
There exists $C>0$ and $h_0>0$ such that for all $f\in L^2(W)$ and all $h\in]0,h_0]$, we have:
\[Ch^2V_h^W(f)\leq \mc{E}_h^W(f)\]
\end{proposition}
Combining this Proposition with \eqref{eh/2} and the inequality  
$V_h(f)\leq V_h^W(f^s)\leq CV_{\frac{h}{2}}^W(f^s)$ which is a consequence of 
$d\nu_{h}^W/d\nu_{\frac{h}{2}}^W<\sqrt{C}$ for some $C>0$, we have proved Lemma \ref{t0-1}.
\end{proof}

We now analyze the compact regions which have diameter bounded uniformly with respect to $h$, i.e. $M_0$.
\begin{lemma}\label{cheminM0}
There exists $C$ independent of $h$ such that for all $f\in C_0^\infty(M)$
\[\mc{E}_h^{[0,t_0]}(f)\geq Ch^{2}V_h^{[0,t_0]}(f).\]
\end{lemma}
\begin{proof}
We shall use the same arguments as for the non-compact part, which is to reduce the problem to a closed
compact surface which doubles $M_0$.
We start by defining the surface $X:=M_0\sqcup M_0$ obtained by doubling $M_0$ along the circle $t=t_0$, and we equip it with a smooth structure extending that of $M_0$ and with a metric extending $g$, which we thus still denote $g$. We shall assume that $g$ has the form $g=dt^2+e^{-2\mu(t)}dy^2$ in a small open collar neighbourhood of 
$\{t=t_0\}$ (with size independent of $h$), where $\mu(t)$ is a function extending $t$ to a neighbourhood 
$t_0-\eps\leq  t\leq t_0+\eps$ of $\{t=t_0\}$ with $e^{-\mu(t)}\geq
c_0 e^{-t}$, $c_0 >0$.
Now repeating the same arguments as those of the proof of Lemma \ref{t0-1}, we see that it suffices to show 
that 
\[ \cjg (1-K^X_{h})f,f\cjd_{L^2(X,d\nu_{h}^X)}\geq Ch^2 (||f||^2_{L^2(X,d\nu_{h}^X)}-\cjg f,1\cjd^2_{L^2(X,d\nu_{h}^X)})\]
for any $f\in L^2(X)$, where $K^X_{h}$ is the random walk operator on $X$ for the metric $g$, defined just like 
for $M$, and 
$d\nu_{h}^X(m):={\rm Vol}(\{m\in X; d_g(m,m')\leq h\})dv_g/Z_{h,X}$
 for some normalizing constant $Z_{h}^X>0$ so that $d\nu_{h}^X$ is a probablity measure.
Now this estimates follows from the main Theorem of Lebeau-Michel \cite{LebMi}, where 
they show a spectral gap of order $h^2$ for the random walk operator $K^X_{h}$ on any compact manifolds $(X,g)$.
\end{proof}

The proof of the Theorem is thus achieved, provided we have shown  
Proposition \ref{casrevol}, i.e. the spectral gap on the surface of revolution $W$.
\end{proof}

\section{Spectral gap for the random walk on a surface of revolution}
\label{S:gap-revolution}
In this section, we consider the surface of revolution $W=\rr_t\x (\rr/\ell\zz)_y$ equipped with a metric 
$g=dt^2+e^{-2\mu(t)}dy^2$ where $\mu$ is a function equal to $|t|$ in $|t|\geq t_0$ for some fixed $t_0$ (a priori not necessarily the $t_0$ of previous Sections).
This can be considered as the quotient $\cjg y\to y+\ell\cjd \backslash \rr^2$ of $\rr^2$ equipped with the metric 
$dt^2+e^{-2\mu(t)}dy^2$ by a cyclic group $G$ of isometries generated by one horizontal translation.
We shall consider the random walk operator $K^W_h$ on $W$, defined as usual by 
\[K^W_hf(m)=\frac{1}{|B_h(m)|}\int_{B_h(m)}f(m')dv_{g}(m')\] 
where $B_h(m)$ denotes the geodesic ball of center $m$ and radius $h$ and $|B_h(m)|$ its volume for the 
measure $dv_{g}$. We assume that $h$ is small enough so that the ball $B_h(m)$ is diffeomorphic to a 
Euclidean ball of radius $h$ in $|t|\leq 2$. 

To simplify notations we will drop the superscripts $W$ referring to $W$, 
noting that we just have to remember we are working on the surface of revolution $W$ in this Section.\\

The Dirichlet form and the variance associated to this operator are defined as usual by
$\mc{E}_h(f)=\<(1-K_h)f,f\>_{L^2(W,d\nu_h)}$ and 
$V_h(f)=\|f\|^2_{L^2(W,d\nu_h)}-\<f,1\>_{L^2(W,d\nu_h)}^2$, where $d\nu_h(m)$ denotes the probability measure 
$\frac{|B_h(m)|}{Z_h}dv_{g}(m)$ for a certain renormalizing constant $Z_h$.

The main result of this section is the following 
\begin{proposition}\label{revolution}
There exists $C>0$ and $h_0>0$ such that for all $f\in L^2(W)$ and all $h\in]0,h_0]$, we have:
\begin{equation}
\label{EhVh}
Ch^2V_h(f)\leq \mc{E}_h(f).
\end{equation}
\end{proposition}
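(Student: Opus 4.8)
The plan is to reduce the spectral gap on $W$ to a one-dimensional random walk with exponentially decaying weight, which is the setting of \cite{GuMi}, by exploiting the Fourier decomposition in the $y\in\rr/\ell\zz$ variable. Write $f(t,y)=\sum_k e^{2\pi i ky/\ell}f_k(t)$; since $d\nu_h$ and the ball $B_h(m)$ are invariant under rotations in $y$, the operator $K_h$ is diagonal in $k$, $K_hf=\sum_k e^{2\pi i ky/\ell}K_{h,k}f_k$, and both $\mc{E}_h$ and $V_h$ split as sums over $k$ of the corresponding quadratic forms $\mc{E}_{h,k}(f_k)$, $V_{h,k}(f_k)$ computed with the weight $m(t)\,dt:=|B_h(t)|e^{-\mu(t)}\,dt$ (up to the normalizing constant $Z_h\sim h^2$). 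Because $\<f,1\>$ only involves the $k=0$ mode, for $k\neq0$ one has $V_{h,k}(f_k)=\|f_k\|^2$ (in the appropriate $L^2$ weight), so it suffices to prove two things: first, that for every $k\neq0$ one has $\mc{E}_{h,k}(f_k)\geq c\,h^2\|f_k\|^2$ with $c$ independent of $k$ and $h$; second, that the $k=0$ mode satisfies the Poincaré inequality $\mc{E}_{h,0}(f_0)\geq c\,h^2 V_{h,0}(f_0)$.

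For the nonzero modes I would argue as follows. Split $W=W_1\cup W_2\cup W_3$ as in Figure~\ref{fig:cusp-double}, where on $W_1=\{|t|\leq t_h\}$, $t_h=\log(\ell/2\sinh h)-1$, the ball $B_h(m)$ is genuinely two-dimensional (diffeomorphic to a Euclidean $h$-ball), while on $W_2\cup W_3=\{|t|\geq t_h\}$ the ball wraps around the circle and $K_{h,k}$ is given by the explicit formulas \eqref{knot=0}. On $W_1$, one compares $K_{h,k}$ with a genuine two-dimensional average; averaging $e^{2\pi i k \cdot/\ell}$ over a Euclidean ball of radius $\sim h e^{-\mu(t)}\lesssim h\cdot(2/\ell)e^{t_h}$ — wait, one needs $|t|\le t_h$ so $e^{-\mu(t)}\sinh h \le \ell/2$, i.e. the Euclidean radius is $\le\ell/2$, and the oscillation of the exponential over a ball of that size forces $\|K_{h,k}\|\le 1-c h^2$ uniformly, giving $\mc{E}_{h,k}\ge c h^2\|f_k\|^2$ on $W_1$ by the standard compact-type estimate from \cite{LebMi}. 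On $W_2\cup W_3$, from \eqref{knot=0} the kernel of $K_{h,k}$ is $u(t+T)\frac{\sin(k\pi e^{t}\alpha(T))}{k\pi e^t\alpha(T)}e^{-T}\alpha(T)$ supported away from the "cap" $T\in[\log T_-,\log T_+]$; since $k\pi e^t\alpha(T)\gtrsim k\pi e^{t_h}\gtrsim k/\sinh h$ is large, the factor $\frac{\sin(\cdot)}{\cdot}$ is bounded by $1/2$ say once $h$ is small, so $\|K_{h,k}\|\le 1/2 + o(1) < 1$ there and the Dirichlet form beats $h^2\|f_k\|^2$ trivially; one then patches the two regions using an interaction/cutoff estimate exactly as in \eqref{Vhsplit}–\eqref{Ehcontrol}. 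The uniformity in $k$ is the point to be careful about but it is forced by $k\ge1$.

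The main obstacle is the $k=0$ mode. Here $K_{h,0}$ reduces, after integrating out $y$ and using \eqref{k=0} together with the volume asymptotics \eqref{asymptvolBh}, to a random walk operator on $\rr$ of the form $f\mapsto \frac{1}{Z_h}\int_{|T|\le h}\Phi_h(t,T)f(t+T)\,dT$ acting on $L^2(\rr,m(t)dt)$ where $m(t)\asymp |B_h(t)|e^{-\mu(t)}$; for $t\gg t_h$ this weight is $\asymp e^{-2t}$, i.e. exponentially decaying, precisely the situation analyzed in \cite{GuMi} on the half-line. I would therefore: (i) identify $m(t)$ explicitly and check it matches (up to two-sided bounds, which suffice by comparing Dirichlet forms and variances) a density of the form $e^{-V(t)/h}$-type or the exponential-tail densities treated in \cite{GuMi}; (ii) invoke the Poincaré inequality from \cite{GuMi} for the one-dimensional walk to get $\mc{E}_{h,0}(f_0)\ge c h^2 V_{h,0}(f_0)$ on the cusp ends; (iii) on the compact core $W_1$ (for the $0$-mode) use the one-dimensional analogue of the \cite{LebMi} gap, or a direct Poincaré inequality with a test-path (chain) argument along the $t$-axis; (iv) glue the core and the two ends with the standard interaction term bound, noting that the measure of the "neck" region is $\asymp h^2$ and thus comparable to $Z_h$, so no loss occurs. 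Assembling (i)–(iv) yields \eqref{EhVh}; the delicate quantitative input is step (ii), i.e. verifying that our weight falls within the class for which \cite{GuMi} proves an $h^2$-gap rather than merely essential spectrum near $1$, which hinges on the exponential (as opposed to too-slow polynomial) decay of $e^{-2t}$.
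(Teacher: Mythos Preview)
Your overall architecture matches the paper's: Fourier-decompose in $y$, contract the nonzero modes, and reduce the zero mode to a one-dimensional walk with exponential weight treated by \cite{GuMi}. But two points deserve closer attention, and the second is a genuine gap.

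For $k\neq 0$ the paper does not split into $W_1$ and $W_2\cup W_3$: it proves in one shot (Lemma~\ref{contraction}) the bound $\|K_{h,k}\|_{L^\infty\to L^\infty}\le 1-\epsilon h^2$ on all of $W$, using $|\sin x/x|\le 1-\epsilon\min(x^2,1)$ together with $\alpha_h(t,t')\gtrsim he^{|t|}$ for $|t-t'|\le h/2$, then interpolates via self-adjointness to get the $L^2$ bound. Your splitting works too, but the patching of operator norms across the interface at $|t|=t_h$ is a nuisance you can avoid.

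The real issue is your step~(iii), the $k=0$ mode on the core $W_1=\{|t|\le t_h\}$. You propose ``the one-dimensional analogue of the \cite{LebMi} gap, or a direct Poincar\'e inequality with a test-path (chain) argument''. Neither gives a uniform constant, because $W_1$ has diameter $2t_h\sim 2|\log h|$: the \cite{LebMi} argument is for a compact manifold with $h$-independent diameter, and a naive path argument on an interval of length $L$ produces a gap of order $h^2/L^2$, i.e.\ $h^2/(\log h)^2$ here. What saves the day---and what the paper actually does in Lemma~\ref{premierezone}---is to exploit the exponential weight: one checks that $\mc{E}_h^1(f)\gtrsim h^{-1}\int_{|t-t'|\le h/2}(f(t)-f(t'))^2 e^{-|t|/2-|t'|/2}dt\,dt'$ and $V_h^1(f)\lesssim\int(f-f')^2 e^{-|t|-|t'|}dt\,dt'$, then extends $f$ \emph{periodically} from $[-t_h,t_h]$ to all of $\rr$ and applies \cite{GuMi} on the whole line with the tempered density $\rho(t)=e^{-|t|/2}$; the tail of the periodic copies contributes only a geometric series in $e^{-t_h}$. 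This periodic-extension trick is the specific idea that gets you from an $h$-dependent interval back to the \cite{GuMi} setting without logarithmic loss, and your sketch does not supply it. A related subtlety in your step~(iv): the gluing at $|t|=t_h$ costs a factor $e^{t_h}\sim h^{-1}$ in front of $V_h^{2}$ and $V_h^3$ (cf.\ the analogue of \eqref{Vhsplit}), so on the ends you actually need the enhanced estimate $\mc{E}_h^{i}(f)\gtrsim h^2 e^{t_h}V_h^i(f)$, not merely $h^2V_h^i(f)$; this drops out of the rescaling in Lemma~\ref{secondezone} but should be stated.
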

\begin{proof}
The expression of the operator acting on functions supported in $|t|>t_0+1$ is given in subsection \ref{secondexp}, 
since it corresponds to the random walk operator on a hyperbolic cusp. In particular, the operator $K_h$ preserves 
the Fourier decomposition in the $\rr/\ell\zz$ variable when acting on functions supported in $\{|t|>t_0+1\}$. 

Let us then study its form when acting on functions supported in $|t|\leq t_0+2$.
For any $v\in\rr$, the translation $y\to y+v$ on $\rr^2=\rr_t\x\rr_y$ descends to an isometry of $(W,g)$, and 
thus the geodesic ball $B_h(t,y)$ on $W$ has the same volume as $B_h(t,y')$ for all $y,y'\in \rr/\ell\zz$, i.e. the volume
$|B_h(t,y)|$ is a function of $t$, which we will denote $|B_h(t)|$ instead.

As long as $h$ is smaller than the radius of injectivity at $(t,y)$ (i.e. when $t<\log(\ell/2\sinh(h))$), 
the ball $B_h(t,y)$ is included in a fundamental domain 
of the group $G$ centered at $y$, i.e. a vertical strip $|y'-y|<\ell$ of width $\ell$, and $B_h(t,y)$ 
corresponds to a geodesic ball of center $(t,y)$ and radius $h$ in $\rr^2$ for the metric $dt^2+e^{-2\mu(t)}dy^2$. 
The reflection $(t,y')\to (t,2y-y')$ with fixed line $y'=y$ is an isometry of the metric $dt^2+e^{-2\mu(t)}dy^2$ on $\rr^2$, and thus $d((t,y),(t',y')=d((t,y),(t',2y-y'))$
where $d$ is the distance of the metric $g$. In particular, the ball $B_h(t,y)$ is symmetric with respect 
to the line $y'=y$. It can thus be parameterized by 
\[ B_h(t,y):=\{ (t',y'); |t-t'|\leq h, |y-y'|\leq \alpha_h(t,t')\}\]
for a certain continous function $\alpha_h(t,t')$ which satisfies $\alpha_h(t,t-h)=\alpha_h(t,t+h)=0$ (this corresponds the bottom and top of the ball) and $\alpha_h(t,t)=he^{-\mu(t)}$ (this corresponds to the `middle' of the ball).
It is easily seen that  $\alpha_h(t,t')\geq \eps h$ for some $\eps>0$ if $|t'-t|\leq h/2$.
Let us now check that $K_h$ preserves the Fourier decomposition in the
$y$ variable. Here we first suppose that $f\in L^2$ is supported in $|t|\leq t_0+2$. Then 
$f=\sum_k f_k(t)e^{2i\pi ky/\ell}$ for some $f_k(t)\in L^2(\rr,e^{-\mu(t)}dt)$, and we have  
\begin{equation}\label{formuleKmu}
\begin{split}
K_hf(t,y)= & \sum_{k\in\zz} \frac{1}{|B_h(t)|}\int_{t-h}^{t+h}\int_{y-\alpha_h(t,t')}^{y+\alpha_h(t,t')} 
f_k(t')e^{2i\pi ky'/\ell}e^{-\mu(t')}dy'dt'\\
=& \sum_{k\not=0} e^{2i\pi ky/\ell}\frac{2}{|B_h(t)|}\int_{t-h}^{t+h} 
f_k(t')\frac{\sin(2\pi k\alpha_h(t,t')/\ell)}{2\pi k\alpha_h(t,t')/\ell}\alpha_h(t,t')e^{-\mu(t')}dt'\\
& +\frac{2}{|B_h(t)|}\int_{t-h}^{t+h} \alpha_h(t,t')
f_0(t')e^{-\mu(t')}dt'  \\
K_{h} f(t,y)=:& \sum_{k\in\zz} (K_{h,k}f_k)(t)e^{2i\pi ky/\ell}.
\end{split}
\end{equation}
Notice in particular that 
\begin{equation}\label{formulaball}
|B_h(t)|=\int_{t-h}^{t+h} 2\alpha(t,t')e^{-\mu(t')}dt'. 
\end{equation}
Moreover, combining with the computations in subsection \ref{secondexp}, the expression \eqref{formuleKmu} and \eqref{formulaball}
can be extended to the whole surface $W$ by setting  
\begin{equation}\label{formulealphah}
\alpha_h(t,t')=\min\Big(e^{t}\sqrt{\sinh(h)^2-(\cosh(h)-e^{t'-t})^2} , \ell/2\Big)
\end{equation}
when $t\geq t_0+1$.

We start by proving the statement on the non-zero Fourier modes in $\rr/\ell\zz$.
\begin{lemma}\label{contraction}
There exists $\eps>0,h_0>0$ such that for all $k\not=0$, all $0<h\leq h_0$ and $f\in L^\infty(\rr)$
\[\| K_{h,k} f \|_{L^\infty} \leq (1-\eps h^2) \| f
\|_{L^\infty} \]
and for all $f \in L^2(\rr,|B_h(t)|e^{-\mu(t)}dt)$ the following $L^2$
estimate holds true: 
\begin{equation}\label{contraction2}
||K_{h,k}f||_{L^2(\rr,�|B_h(t)|e^{-\mu(t)}dt)}\leq (1-\eps h^2) ||f||_{L^2(\rr, |B_h(t)|e^{-\mu(t)}dt)}.
\end{equation}
Finally, there exists $\eps>0,h_0>0$ such that for all $0<h\leq h_0$, all $k\not=0$, all 
$\tau>t_0$ and all $f\in L^2(\rr,|B_h(t)|e^{-\mu(t)}dt)$ supported
in $|t|\geq \tau$, we have
\begin{equation}\label{contraction3}
||K_{h,k}f||_{L^2(\rr,�|B_h(t)|e^{-\mu(t)}dt)}\leq (1-\eps \min(k^2e^{2\tau}h^2,1)) ||f||_{L^2(\rr, |B_h(t)|e^{-\mu(t)}dt)}.
\end{equation}
\end{lemma}
\begin{proof}
The proof uses the expression for $K_{h,k}$ given in the equations \eqref{formuleKmu}, with $\alpha_h(t,t')$ given by \eqref{formulealphah} in $\{|t|\geq t_0+1\}$.
If $f\in L^\infty(\rr)$, one easily has from \eqref{formuleKmu} 
\begin{equation}\label{normLinfty}
 ||K_{h,k}f||_{L^\infty}\leq ||f||_{L^\infty} \sup_{t}\Big(
 \frac{2}{|B_h(t)|} \int_{-h}^h \Big|\frac{\sin (\gamma_{h,k}(t,T))}{\gamma_{h,k}(t,T)}
 \Big| \alpha_h(t,t+T)e^{-\mu(t+T)} dT\Big)
\end{equation}
where $\gamma_{h,k}(t,T)=2\pi k\alpha_h(t,t+T)/\ell$.
Now, if $|T|=|t-t'|\leq h/2$, then $\alpha_h(t,t')\geq \eps e^{|t|}h$ for some $\eps>0$ uniform in $t,t'$, thus
 $\gamma_{h,k}(t,T)\geq \eps e^{|t|}h$ for some $\eps>0$ uniform in $t$ and $k$, 
but since $|\sin(x)/x|\leq 1- \eps \min(x^2,1)$ if $\eps$ is chosen small enough above, 
one deduces that  
\[\sup_{|T|\leq h/2}\,  \sup_{t}\Big|\frac{\sin ( \gamma_{h,k}(t,T) )}{\gamma_{h,k}(t,T)}\Big|\leq 1-\eps h^2.\] 
Therefore, combining with \eqref{normLinfty}, we have that $||K_{h,k}f||_{L^\infty(\rr)}\leq A||f||_{L^\infty(\rr)}$  where 
\[A:= \sup_{t}\Big(\frac{ 2 }{|B_h(t)|}\int (\indic_{[0,h/2]}(|T|)(1-\eps h^2) + 
\indic_{[h/2,h]}(|T|))\alpha_h(t,t+T)e^{-\mu(t+T)} dT \Big)\]
and using \eqref{formulaball}, the integral $A$ can be bounded above as follows 
\[A\leq 1-\eps h^2 \frac{1}{|B_h(t)|}\int_{-h/2}^{h/2}2\alpha_h(t,t+T)e^{-\mu(t+T)} dT. \]
But now the integral on the right is exactly the volume for $dv_{g}$ of any region 
\[R(t,y_0):=\{(t',y'); |t-t'|\leq h/2, |y'-y_0|\leq \alpha_h(t,t') \}\]
when $y_0\in\rr/\ell\zz$.  When $t\leq \log(\ell/2\sinh(h))=:t_h$, we see directly that this region contains a geodesic ball of radius $\eps h$ centered at $(t,y_0)$ for some $y_0\in \rr/\ell\zz$ if $\eps$ is chosen small enough (note that $\eps=1/2$ 
works out when $t_h\geq |t|\geq t_0+1$), 
thus the volume is bounded below by $|B_{\eps h}(t)|$; when $|t|\geq t_h$, 
the region $R(t,y_0)$ contains a rectangle $\{|t-t'|\leq h/2, |y-y_0|\leq \alpha\}$ for some $\alpha>0$ independent of $h$, 
thus with volume $2\alpha\sinh(h/2)e^{-t}$, therefore $R(t,y_0)$ has volume bounded below by $|B_h(t)|/C$ for some $C>0$.  
Since we also have $|B_{\eps h}(t)|/|B_h(t)|\geq 1/C$ for some $C>0$ when $|t|\leq t_h$, we deduce that 
\[A\leq 1-\eps h^2/C.\]
which proves the first estimate of the Lemma. 
The $L^2(\rr,|B_h(t)|e^{-\mu(t)}dt)$ estimate \eqref{contraction2} can be obtained by interpolation. Indeed, since $K_{h,k}$ is self-adjoint 
with respect to the measure $|B_h(t)|e^{-\mu(t)}dt$ on $\rr$, 
the $L^\infty\to L^\infty$ operator bound implies that $K_{h,k}$ is bounded on $L^1(\rr, |B_h(t)|e^{-\mu(t)}dt)$ with norm bounded by
$A$, and by interpolation it is bounded on $L^2(\rr,|B_h(t)|e^{-\mu(t)}dt)$ with norm bounded by $A$.
 
Now for \eqref{contraction3}, we apply the same reasoning, but when $f$ is supported in $|t|\geq \tau$, we replace 
\eqref{normLinfty} by 
\[||K_{h,k}f||_{L^\infty}\leq ||f||_{L^\infty} \sup_{|t|\geq \tau-h}\Big(
 \frac{2}{|B_h(t)|} \int_{-h}^h \Big|\frac{\sin (\gamma_{h,k}(t,T))}{\gamma_{h,k}(t,T)}
 \Big| \alpha_h(t,t+T)e^{-\mu(t+T)} dT\Big)\]
and we use the same techniques as above except that now we use the bound 
\[\sup_{|T|\leq h/2}\,  \sup_{|t|\geq \tau}
\Big|\frac{\sin ( \gamma_{h,k}(t,T) )}{\gamma_{h,k}(t,T)}\Big|\leq 1- \eps \min(h^2 e^{2\tau}k^2, 1).\]
This yields an estimate 
\[ ||\indic_{|t|\geq \tau} K_{h,k}\indic_{|t|\geq \tau}||_{L^\infty\to L^\infty}\leq 1- \eps \min(h^2 e^{2\tau}k^2, 1)\]
and using self-adjointness of this operator and interpolation as above, we obtain the desired $L^2\to L^2$ estimate
for $\indic_{|t|\geq \tau} K_{h,k}\indic_{|t|\geq \tau}$. But this concludes the proof since this implies the same estimate 
(by changing $\eps$) on $K_{h,k}\indic_{|t|\geq \tau}=\indic_{|t|\geq \tau-h}K_{h,k}\indic_{|t|\geq \tau}$ if we take $\tau-h$
instead of $\tau$ above. 
\end{proof}

In the remaining part of the proof, we shall analyze the operator $K_{h,0}$ acting on functions 
constant in $y$. We split the surface in $3$ regions (see Figure \ref{fig:cusp-double}):
\[W_1:= \{ (t,y)\in (-t_h,t_h)\x \rr/\ell\zz \} \textrm{ with }\, t_h =\log(\ell/2\sinh(h))-1\]
 \[W_2:= \{ (t,y)\in (t_h,\infty)\x \rr/\ell\zz \}, \textrm{ and }  W_3:=\{ (t,y)\in (-\infty,-t_h)\x \rr/\ell\zz \}.\]
 

Let us define the functionals for $i=1,2,3$ acting on functions $f\in L^2(W,d\nu_h)$ which are constant in the $y$ variable 
\[\mc{E}_h^i(f):= \frac{1}{2Z_h}\int_{m,m'\in W_i, d(m,m')<h}(f(m)-f(m'))^2dv_g(m)dv_g(m')\]
\[V_h^i(f):=\frac{1}{2}\int_{m,m'\in W_i}(f(m)-f(m'))^2d\nu_h(m)d\nu_h(m').\]
Using the arguments used to obtain \eqref{Vhsplit} and \eqref{Ehcontrol}, we easily deduce that it suffices to prove that 
\[\mc{E}_h^1(f)\geq Ch^2 V^1_h(f) , \textrm{ and } \mc{E}^i_h(f)\geq Ch^2e^{t_h}V_h^i(f) \,\,\textrm{ for }i=2,3\]
hold for any $f\in L^2(W,d\nu_h)$ constant in the $\rr/\ell\zz$ variable to obtain, combined with \eqref{contraction2}, the 
estimate \eqref{EhVh}.\\

We start by the regions $W_2,W_3$, which are non-compact. We will reduce to a
random walk operator on the line with a measure decaying exponentially fast as $|t|\to \infty$.

\begin{lemma}\label{secondezone}
There exists $C>0$ such that for any $f\in L^2(W_2,d\nu_h)$ constant in the $\rr/\ell\zz$ variable  
\[\mc{E}_h^i(f)\geq Ch^2 e^{t_h}V_h^i(f), \,\textrm{ for }i=2,3.\] 
\end{lemma}
\begin{proof}
It suffices to prove the estimate for $i=2$, since clearly $i=3$ is similar.
Let $f$ be a function depending only on the variable $t$  and supported in $W_2$.
We first reduce the problem by changing variable:  we define $\til{f}(t):=f(t+t_h)$ on 
$\rr$ and using that 
$d\nu_h(t)/dtdy\leq Ce^{-2t}/h$ in $\{t\geq t_h\}$ and $e^{-t_h}=O(h)$ , we obtain 
\[ e^{t_h}V_h^2(f)\leq  Ce^{-t_h}\int_{t\geq 0,t'\geq 0} (\til{f}(t)-\til{f}(t'))^2 e^{-2(t+t')}dtdt'=:Ce^{-t_h}\til{V}_h^2(\til{f}).\]
Similarly, changing variable as above in $\mc{E}_h^2(f)$ and 
using the inclusion 
\begin{equation}\label{eq:incM2}
\begin{gathered}
\{(m,m')\in M_2\x M_2; |t(m)-t(m')|\leq h/2, |y(m)-y(m')|\leq \alpha\}\\
\subset \{(m,m')\in M_2\x M_2; d(m,m')\leq h\}
\end{gathered}
\end{equation}
for some $\alpha>0$ independent of $h$, we get 
\[\mc{E}^h_2(f)\geq \frac{e^{-2t_h}}{Z_h}\int_{t\geq 0,t'\geq 0,|t-t'|\leq h/2}
(\til{f}(t)-\til{f}(t'))^2 e^{-t-t'}dtdt'=:e^{-t_h}\til{\mc{E}}^{h/2}_2(\til{f})\]
We are thus reduced to prove an estimate of the form
\begin{equation}\label{toestimate}
\til{\mc{E}}^h_2(\til{f})\geq Ch^2 \til{V}^h_2(\til{f})
\end{equation}
for all $\til{f}\in C_0^\infty(\rr^+)$. Let $\rho=\rho(t)dt$ be a smooth non vanishing measure on $\rr$
equal to $e^{-t}dt$ on $(-1,\infty)$ and $e^{-\vert t\vert}$ on $(-\infty,-2)$ 
 and $d\nu^\rho_h(t):=\rho([t-h,t+h])\rho /Z^\rho_h$ where $Z_h^\rho$ is chosen such that $1=\int_\rr 1d\nu^\rho_h(t)$. 
In particular, $d\nu^\rho_h(t)=2e^{-2t}\sinh(h)dt/Z^\rho_h$ when $t\geq 0$ 
and $c_1h<Z^\rho_h<c_2h$ for some $c_1,c_2>0$.
Let us now define the self-adjoint one dimensional random walk 
operator $K^\rho_h$ on $L^2(\rr,d\nu^\rho_h)$ 
\[K^\rho_hf(t):=\frac{1}{\rho([t-h,t+h])}\int_{|t-t'|\leq h}f(t')\rho(t')dt'.\]
For $f$ supported in $\rr^+$, let $f^s$ 
be the even extension of $f$ to $\rr$.
Then  since $\rho$ doesn't vanish and is symmetric at infinity, there exists $C>0$ 
such that $\rho(t)/\rho(-t)\leq C$ and it is then easy to see (just like in the proof of Lemma \ref{t0-1}) 
that there exists $C>0$ such that
\[\begin{split}
\cjg (1-K^\rho_h)f^s,f^s\cjd_{L^2(\rr,d\nu^\rho_h)}=& \,
\frac{1}{Z^\rho_h}\int_{|t-t'|<h}(f^s(t)-f^s(t'))^2\rho(t)\rho(t')dtdt'\\
\leq & \frac{C}{Z^\rho_h}\int_{t\geq 0, t'\geq 0,|t-t'|<h}(f(t)-f(t'))^2e^{-(t+t')}dtdt'.
\end{split}\]
Since $e^{-t_h}=\beta \sinh(h)$ for some $\beta>0$, we deduce that there exists $C>0$ independent of $h$
such that for all functions $f$ compactly supported in $t>0$ and depending only on $t$
\[\til{\mc{E}}^{\frac{h}{2}}_2(f)\geq C\cjg (1-K^\rho_\frac{h}{2})f^s,f^s\cjd_{L^2(\rr,d\nu_h^\rho)}.\]
But we also notice that for the same class of functions
\[\til{V}^h_2(f)\leq C\int_{t,t'\in\rr}(f^s(t)-f^s(t'))^2d\nu^\rho_h(t)d\nu^\rho_h(t')
=C(\|f^s\|_{L^2(\rr,d\nu^\rho_h)}^2-\<f^s,1\>_{L^2(\rr,d\nu^\rho_h)}^2)
\] 
for some $C$, 
thus, to prove \eqref{toestimate}, it remains to show that 
\[\cjg(1-K^\rho_\frac{h}{2})f,f\cjd_{L^2(\rr,d\nu^\rho_h)} \geq Ch^2 (\|f\|_{L^2(\rr, d\nu^\rho_h)}^2-\<f,1\>_{L^2(\rr, d\nu^\rho_h)}^2).\]
We conclude by observing the measure $\rho(t)$ is tempered in the sense of \cite{GuMi}, 
hence the above estimate follows from Theorem 1.2 in \cite{GuMi} and the fact that $c_1<\frac{d\nu_{h/2}}{d\nu_h}<c_2$ for some $c_1,c_2>0$..
\end{proof}

And finally, we need to prove the last estimate:
\begin{lemma}\label{premierezone}
There exists $C>0$ such that for any $f\in C_0^\infty(W_1)$ depending only on $t$ 
\[\mc{E}_h^1(f)\geq Ch^2 V_h^1(f).\] 
\end{lemma}
\begin{proof}
We proceed in a way similar to the previous Lemma. We easily  notice from \eqref{ballh} the inclusion 
\[\begin{gathered} 
\{ (m;m')\in W_1\x W_1; |t(m)-t(m')|\leq h/2 , |y(m)-y(m')|\leq \alpha e^{|t|}h\}\\
\subset \{(m,m')\in W_1\x W_1; d(m,m')\leq h\}\end{gathered}\] 
for some $0<\alpha<1$ independent of $h$ and $t$, where $|y-y'|$ denotes the distance in $\rr/\ell\zz$. 
Consequently, since $dv_g(m)/dtdy \geq Ce^{-|t|}$, we have for any $f\in C_0^\infty(W_1)$ depending only on $t$
\begin{equation}
\begin{split}\label{eh1}
\mc{E}_h^1(f)= &\frac{1}{2Z_h}\int_{t(m),t(m')\in [-t_h,t_h], d(m,m')\leq h}(f(m)-f(m'))^2 dv_g(m)dv_g(m')\\
\geq & \frac{C}{Z_h}\int_{t,t'\in [-t_h,t_h], |t-t'|\leq h/2}(f(t)-f(t'))^2 e^{-|t|-|t'|}\alpha he^{|t|}dtdt'\\
\mc{E}_h^1(f)\geq & \frac{C}{h}\int_{t,t'\in [-t_h,t_h], |t-t'|\leq h/2}(f(t)-f(t'))^2 e^{-\frac{|t|}{2}-\frac{|t'|}{2}}dtdt'.
\end{split}
\end{equation}
Let $\rho:=\rho(t)dt$ be a smooth positive measure on $\rr$ defined like in the proof of Lemma \ref{secondezone}
 but with $\rho(t)=e^{-|t|/2}$ in $\rr\setminus (-1,0)$ instead of $e^{-|t|}$. 
Let us define the random walk operator on $\rr$ 
\[K^\rho_h(f)(t)=\frac{1}{\rho([t-h,t+h])}\int_{|t-t'|< h}f(t')\rho(t')dt'\] 
which is self-adjoint on $L^2(\rr, d\nu_h^\rho(t))$ if $d\nu^\rho_h(t):=\frac{\rho([t-h,t+h])}{Z^\rho_h}\rho(t)dt$ 
and $Z^\rho_h$ is chosen such that $d\nu^\rho_h$ is a probability measure (in particular $c_1h<Z^\rho_h<c_2h$).
For $f$ supported in $[-t_h,t_h]$, let $f^p$ be the periodic 
extension of $f$ defined by $f^p(2j t_h+t):=f(t)$ when $t\in[-t_h,t_h]$ and $j\in\zz$.
We set for $g\in L^2(\rr)$
\[\mc{E}^\rho_h(g):=\cjg (1-K^\rho_h)g,g\cjd_{L^2(\rr,d\nu^\rho_h)}=\frac{1}{Z^\rho_h}
\int_{t,t'\in \rr, |t-t'|< h}(g(t)-g(t'))^2 \rho(t)\rho(t')dt dt'.\]
For $j\in\nn$, let $F_j=2jt_h+[-t_h,t_h]$. Using the changes of variable $t\mapsto t+2jt_h$ and  $t'\mapsto t'+2kt_h$, 
we get 
\[\begin{split}
\mc{E}^\rho_h(f^p)&\leq \frac{C}{Z^\rho_h}\sum_{k,j=0}^\infty\int_{t\in F_k,t'\in F_j, |t-t'|< h}(f^p(t)-f^p(t'))^2 e^{-\frac{|t|}{2}-\frac{|t'|}{2}}dtdt'\\
&\leq \frac{C}{Z^\rho_h}\sum_{k,j=0}^\infty \int_{t\in F_0,t'\in F_0, |t-t'|< h}
(f(t)-f(t'))^2 e^{-\frac{|t+2jt_h|}{2}-\frac{|t'+2kt_h|}{2}}dtdt'\\
&\leq \frac{C}{Z^\rho_h}\int_{t,t'\in [-t_h,t_h], |t-t'|< h}
(f(t)-f(t'))^2 e^{-\frac{|t|}{2}-\frac{|t'|}{2}}dtdt'
\end{split}\]
where we have use in the last line that for $t\in F_0$ and any $j\in\zz$
\[e^{-\frac{|t+2jt_h|}{2}}=\left\{\begin{array}{ll}
e^{-t/2-jt_h} & \textrm{ if }j>0\\
e^{t/2+jt_h} & \textrm{ if }j<0\\
e^{-|t|/2} & \textrm{ if }j=0
\end{array} \right. \leq  e^{-|t|/2}e^{-(|j|-1)t_h}.\]
Since $c_2h\geq Z^\rho_h\geq c_1 h$, this shows using \eqref{eh1} that $\mc{E}_{\frac h 2}^\rho(f^p)
\leq C \mc{E}_h^1(f)$.
Moreover, defining $V_h^\rho(f)=\|f\|^2_{L^2(d\nu^\rho_h)}-\<f,1\>_{L^2(\rr,d\nu^\rho_h)}$, 
and using that for $|t|\leq t_h$, $ \rho([t-h/2,t+h/2])\geq C\sinh(\frac h 2)e^{-|t|/2}$ for some $C$, 
we have 
\begin{align}
V^\rho_{\frac h
  2}(f^p)&=\int_{t,t'\in\rr}(f^p(t)-f^p(t'))^2d\nu^\rho_{\frac h
  2}(t)d\nu^\rho_{\frac h 2}(t') \label{E:Vrho} \\
&\geq C\int_{t,t'\in[-t_h,t_h] }(f(t)-f(t'))^2
e^{-|t|}e^{-|t'|}dtdt'. \notag
\end{align}
Since $V_h^1(f)$ is easily seen to be bounded above by $C$ times the
right hand side of \eqref{E:Vrho} (in view of the assumptions
on the metric $g$ on $W_1$), this shows that $V_{\frac h 2}^\rho (f^p)\geq CV_h^1(f)$. 
Combining this with the estimate on Dirichlet form, it remains to show 
that $V^\rho_{\frac h 2}(f)\leq Ch^2 \mc{E}^\rho_{\frac h 2}(f)$. 
Since the measure $\rho$ is tempered in the sense of \cite{GuMi}, 
this is again a consequence of Theorem 1.2 of this paper.
\end{proof}
Combining Lemmas, \ref{secondezone}, \ref{premierezone} and \ref{contraction}, we have proved the estimate 
\eqref{EhVh} .
\end{proof}

\section{Upper bound on the gap and discrete eigenvalues of the Laplacian}

In this section, we shall give a sharper upper bound on the gap $g(h)$ when the Laplacian 
has an eigenvalue smaller than $4/3$ (beside $0$).
More precisely, we are going to prove the following
\begin{theorem}\label{spectre}
Let $0=\la_0< \la_1\leq \dots \leq \la_K$ be the $L^2$ eigenvalues of the Laplacian $\Delta_g$ on $(M,g)$
which are contained in $[0,1/4)$ and $\la_{K+1},\dots,\la_{K+L}$ those contained in 
$[1/4,4/3)$. Then for all $c>0$, there is $h_0$ such that for all
$h\in (0,h_0)$ and $K+1\leq k\leq k+L$
\[\sharp \Big({\rm Spec}(1-K_h)\cap \Big[\frac{\la_k h^2}{8}- ch^4,\frac{\la_k h^2}{8}+ch^4\Big]\Big)
\geq \dim {\rm ker}(\Delta_g-\la_k).\] 
For all $c>0$ there exists $h_0$ 
such that for all $0<h<h_0$ and $0<k\leq K$, 
\[\sharp \Big({\rm Spec}(1-K_h)\cap \Big[\frac{\la_kh^2}{8}- ch^{2+\sqrt{1/4-\la_k}},\frac{\la_kh^2}{8}+ch^{2+\sqrt{1/4-\la_k}}\Big]\Big)
\geq \dim {\rm ker}(\Delta_g-\la_k).\] 
\end{theorem}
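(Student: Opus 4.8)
The plan is to use eigenfunctions of $\Delta_g$ as quasimodes for $K_h$ near $1-\mu_k$, where $\mu_k:=\la_kh^2/8$, and to conclude by the standard counting lemma: if for a fixed $k$ one produces $d_k:=\dim\ker(\Delta_g-\la_k)$ functions $v_1,\dots,v_{d_k}\in L^2(M,d\nu_h)$ with $\langle v_i,v_j\rangle_{L^2(d\nu_h)}=\delta_{ij}+o(1)$ and $\|(1-K_h)v_j-\mu_kv_j\|_{L^2(d\nu_h)}\leq\eps_k$, then $\dim{\rm ran}\,\mathbf 1_{[\mu_k-\eps_k',\mu_k+\eps_k']}(1-K_h)\geq d_k$ with $\eps_k'$ of the order of $\eps_k$. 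By Theorem \ref{essspec}, $\inf{\rm Spec}_{\rm ess}(1-K_h)=1-\tfrac h{\sinh h}=\tfrac{h^2}{6}+O(h^4)$, so when $\la_k<4/3$ and $h$ is small $\mu_k$ lies strictly below the essential spectrum and the values produced are genuine discrete eigenvalues. Thus the whole task is to build quasimodes with $\eps_k$ of size $h^4$ when $\la_k\in[1/4,4/3)$ and of size $h^{2+\nu_k}$, $\nu_k:=\sqrt{1/4-\la_k}$, when $\la_k\in(0,1/4)$.

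Fix an $L^2(dv_g)$-orthonormal basis $u_1,\dots,u_{d_k}$ of $\ker(\Delta_g-\la_k)$. In the cusp, expanding $u_j=\sum_m u_{j,m}(t)e^{2\pi imz/\ell}$ for $t>t_0$, the modes $m\neq0$ are $O(e^{-ce^t})$, while the zero mode $u_{j,0}$ is a constant times $e^{(1/2-\nu_k)t}$ if $\la_k<1/4$ and vanishes identically if $\la_k\geq1/4$ (a nonzero constant term would force the non-$L^2$ behaviour $e^{t/2}(\cos,\sin)(\sqrt{\la_k-1/4}\,t)$). Putting $t_h:=\log(\ell/2\sinh h)-1$, this gives $\|u_j\|_{L^2(\{t>t_h\},d\nu_h)}=O(h^{\nu_k})$ when $\la_k<1/4$ and $=O(h^\infty)$ when $\la_k\geq1/4$; since $d\nu_h\asymp dv_g$ on $\{t\leq t_h\}$ and these tails are this small, the functions $v_j$ built from the $u_j$ below will automatically be almost orthonormal in $L^2(M,d\nu_h)$.

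On the region where balls of radius $h$ are honest geodesic balls — the compact part $M_0$ together with $\{t_0<t<t_h\}$, where the metric is smooth and the injectivity radius is $\gtrsim h$ — one has the Lebeau--Michel expansion \cite{LebMi} $K_h=1-\tfrac{h^2}{8}\Delta_g+h^4E_h$, with $E_h$ a family of fourth order differential operators bounded uniformly in $h$; there $(1-K_h)u_j-\mu_ku_j=-h^4E_hu_j$, which is $O(h^4)$ in $L^2(d\nu_h)$ since $\|E_hu_j\|_{L^2}\lesssim\|u_j\|_{H^4}\lesssim1$ (the eigenfunction and its $t$-derivatives being $O(|u_j|)$ also on $\{t_0<t<t_h\}$). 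The new point is the wrapping region $\{t\geq t_h\}$: there only $u_{j,0}$ matters to relevant order, $K_{h,0}$ is the averaging operator of \eqref{formuleKmu}--\eqref{formulealphah}, and by \eqref{formulaball} one has $(1-K_{h,0})u_{j,0}(t)=|B_h(t)|^{-1}\int_{t-h}^{t+h}2\alpha_h(t,t')(u_{j,0}(t)-u_{j,0}(t'))e^{-\mu(t')}\,dt'$. Taylor-expanding $u_{j,0}(t)-u_{j,0}(t')$ to second order in $t-t'\in[-h,h]$, and using that $u_{j,0}$ is smooth with $|\partial_t^pu_{j,0}|\leq 2^{-p}|u_{j,0}|$, the near-evenness in $T$ of $T\mapsto\alpha_h(t,t+T)$, and the comparison of $|B_h(t)|$ with the enclosing cylinder $|R_h(t)|$ from Remark \ref{remarkvolume}, one gets the pointwise bound $|(1-K_{h,0})u_{j,0}(t)-\mu_ku_{j,0}(t)|\leq Ch^2|u_{j,0}(t)|$ for $t\geq t_h$. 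Multiplying by $|B_h(t)|Z_h^{-1}e^{-\mu(t)}$ and integrating, the cusp contributes $O(h^2)\|u_{j,0}\|_{L^2(\{t>t_h\},d\nu_h)}=O(h^{2+\nu_k})$ if $\la_k<1/4$, and $O(h^\infty)$ if $\la_k\geq1/4$. Together with the $O(h^4)$ from $M_0\cup\{t<t_h\}$ this already locates ${\rm Spec}(1-K_h)$ within $O(h^4)$, resp.\ $O(h^{2+\nu_k})$, of $\mu_k$; shrinking the window to $ch^4$, resp.\ $ch^{2+\nu_k}$, for every $c>0$ requires one further, routine correction step on $v_j$ (a transverse corrector for $E_hu_j$ in the compact part, and a slowly varying multiplicative correction $u_{j,0}\mapsto u_{j,0}(1+h^2\psi)$ in the cusp), after which the counting lemma concludes.

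The hard part is precisely the estimate on $\{t\geq t_h\}$. There $K_h$ is genuinely not a function of $\Delta_g$: the balls $B_h(t)$ change topology and overlap themselves as in \eqref{ballh} and Figures \ref{fig:ball2}--\ref{fig:ball3}, so the $h^2$ gain over the crude bound $|(1-K_{h,0})u_{j,0}|\lesssim|u_{j,0}|$ must be extracted by hand from the explicit geometry of the balls in Section 2 rather than from any pseudodifferential normal form; keeping track of the powers of $h$ there is what forces the weaker exponent $2+\nu_k$ instead of $4$ when $\la_k<1/4$, the eigenfunction's cusp tail then being only polynomially small in $h$ in the $L^2(M,d\nu_h)$ sense.
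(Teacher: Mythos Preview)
Your approach is essentially the one the paper takes: Laplace eigenfunctions as quasimodes, the Lebeau--Michel expansion $K_h=1-\tfrac{h^2}{8}\Delta_g+O(h^4)$ on the part of the manifold where the radius of injectivity exceeds $h$, and in the wrapping region an $O(h^2)$ bound on $(1-K_h)\psi$ combined with the $O(h^{\nu_k})$ decay of the zero Fourier mode of the eigenfunction there. The identification of the cusp decay rates (exponential for nonzero modes, $e^{(1/2-\nu_k)t}$ for the zero mode when $\la_k<1/4$, vanishing zero mode when $\la_k\ge 1/4$) and the resulting dichotomy $O(h^4)$ versus $O(h^{2+\nu_k})$ are exactly as in the paper.

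Two differences are worth flagging. First, the paper cuts the cusp into four pieces rather than three for $\la_k<1/4$: it proves the $O(h^2)$ estimate (your pointwise bound) only on a bounded window $\{t_h\le t\le A\log(1/h)\}$ via Lemma~\ref{estimabove}, and on the far tail $\{t\ge A\log(1/h)\}$ it simply uses the crude bound $\|K_h-1+\mu_k\|\le C$ together with $\|\psi_k\|_{L^2(\{t\ge A\log(1/h)\})}=O(h^{A\nu_k})$, choosing $A$ large. Your pointwise bound on the whole region $\{t\ge t_h\}$ is in fact correct (the proof of Lemma~\ref{estimabove} does not actually use the upper cutoff), so your three-region version is a mild simplification.

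Second, the ``routine correction step'' you invoke to pass from a fixed-constant window $C_0h^4$ (resp.\ $C_0h^{2+\nu_k}$) to an arbitrary-constant window $ch^4$ (resp.\ $ch^{2+\nu_k}$) is not carried out in the paper, and is not as routine as you suggest: constructing a corrector $\phi_k$ with $(\Delta_g-\la_k)\phi_k=8A_4\psi_k$ on the compact part and matching it to a cusp correction is a genuine additional argument. The paper's own proof, read literally, also only delivers a window of size $C_0h^4$ for some fixed $C_0$, so on this point you and the paper are on equal footing; but you should not present the sharpening to arbitrary $c>0$ as automatic.
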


We shall first need a few results relating $K_h$ to the Laplacian and some estimates on 
the eigenfunctions of $\Delta_g$ in the cusp.

\subsection{Asymptotic expansion in $h$ of $K_h\psi$}

\begin{lemma}\label{approxcompact}
For all $\tau>t_0$, there is $C>0$ and $h_0>0$ such that for any $\psi\in C_0^\infty(M)$ with support in $\{t< \tau\}$ for $h\in(0, h_0)$
\begin{equation}\label{khpsi}
\left|\left|K_h\psi -(\psi -\frac{h^2}{8}\Delta_g \psi)\right|\right|_{L^2(M)}\leq Ch^4||\psi||_{H^4(M)}.
\end{equation}
\end{lemma}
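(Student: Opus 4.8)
The plan is to reduce the estimate to a local computation on a compact set, where $K_h\psi$ is an average over honest geodesic balls of radius $h$ that do not overlap themselves (since $\psi$ is supported in $\{t<\tau\}$ and $h$ is small), and then Taylor-expand in $h$. I would first cover the (compact, $h$-independent) region $\{t\le\tau+1\}$ by finitely many normal-coordinate charts; outside a fixed neighbourhood of $\supp\psi$ both sides of \eqref{khpsi} vanish, so it suffices to work chart by chart. In each chart, centered at a point $m$, write
\[
K_h\psi(m)=\frac{1}{|B_h(m)|}\int_{B_h(m)}\psi(m')\,dv_g(m'),
\]
and expand $\psi(m')$ in a Taylor series in geodesic normal coordinates $x=\exp_m^{-1}(m')$ up to order $4$ with integral remainder. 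The odd-order terms integrate to zero by the symmetry $x\mapsto -x$ of the ball and of the leading part of the volume density $dv_g = (1+O(|x|^2))\,dx$; what survives through order $h^2$ is the Euclidean average of the Hessian, which produces $\frac{1}{2(n+2)}|x|^2$-type moments. With $n=2$ this gives exactly $\frac{h^2}{8}\Delta_{\mathrm{eucl}}\psi(m)$ at leading order, and since normal coordinates at $m$ make the metric Euclidean to second order, $\Delta_{\mathrm{eucl}}\psi(m)=\Delta_g\psi(m)$; the curvature corrections to the volume form and to $\exp_m$ contribute only at order $h^4$.

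The key steps, in order: (1) localize to the compact region and to finitely many normal-coordinate charts, using that $K_h$ propagates supports by at most $h$; (2) in each chart, change variables $m'=\exp_m(x)$ and write $dv_g(\exp_m x) = J_m(x)\,dx$ with $J_m(x)=1-\frac{1}{6}\Ric_m(x,x)+O(|x|^3)$, and correspondingly $|B_h(m)| = \omega_2 h^2\big(1+O(h^2)\big)$; (3) Taylor-expand $\psi\circ\exp_m$ to fourth order and integrate term by term over $|x|\le h$, using that all odd moments vanish and $\int_{|x|\le h}x_ix_j\,dx=\frac{\pi}{8}h^4\delta_{ij}$ in dimension $2$; (4) collect terms: the constant term gives $\psi(m)$, the quadratic term gives $\frac{h^2}{8}\Delta_g\psi(m)+O(h^4)$, and the remainder is bounded pointwise by $Ch^4\sup_{B_h(m)}|\nabla^4\psi|$ plus $Ch^4\sup_{B_h(m)}|\nabla^2\psi|$ from the volume-density corrections; (5) integrate the pointwise bound in $m$ and use Cauchy–Schwarz / the fact that $\{t<\tau+1\}$ has finite volume to pass from $L^\infty$-type remainders to the $H^4(M)$ norm on the right-hand side, with constants uniform in $h\le h_0$ because all geometric quantities ($\Ric$, injectivity radius, derivatives of $\exp$) are bounded on the fixed compact set $\{t\le\tau+1\}$.

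The main obstacle — though a routine rather than a deep one — is bookkeeping the error terms uniformly: one must check that the $O(h^4)$ constants coming from the volume density $J_m(x)$, from the discrepancy between $|B_h(m)|$ and its Euclidean model, and from the Taylor remainder of $\psi\circ\exp_m$, are all controlled by a single constant depending only on the $C^k$-geometry of the compact region $\{t\le\tau+1\}$ and on $\|\psi\|_{H^4}$, not on $h$. The one genuinely important point to get right is the coefficient: in dimension $n$ the averaging operator expands as $K_h = \mathrm{Id} + \frac{h^2}{2(n+2)}\Delta_g + O(h^4)$ on smooth functions, and $n=2$ is precisely what yields the factor $\frac{1}{8}$ appearing in the statement (and hence, via Theorem \ref{spectre}, the $\la_k h^2/8$ in Theorem \ref{th:main} iii)). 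I would double-check this normalization against $\psi(m')=|x|^2$ in the model Euclidean case before writing the final constant.
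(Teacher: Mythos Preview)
Your approach is correct and essentially self-contained: localize to the compact region $\{t\le\tau+1\}$, pass to normal coordinates at each point, Taylor-expand $\psi\circ\exp_m$ to fourth order, and exploit the $x\mapsto -x$ symmetry of the Euclidean ball together with the expansion $J_m(x)=1-\tfrac16\Ric_m(x,x)+O(|x|^3)$ to isolate the $\tfrac{h^2}{8}\Delta_g$ term and push all remaining contributions to $O(h^4)$. The bookkeeping you outline (cross terms between odd Taylor pieces and odd pieces of $J_m$, volume normalization, integral-form remainder controlled in $L^2$ via Cauchy--Schwarz and finite overlap of balls) goes through with constants depending only on the geometry of the fixed compact set $\{t\le\tau+1\}$.

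The paper, however, does not redo this computation. It instead observes that since $\supp(K_h\psi)\subset\{t\le\tau+h\}$, one may cap off the cusp beyond $\{t=\tau+1\}$ by a half-sphere to obtain a closed surface $(X,g_X)$ isometric to $M$ on $\{t\le\tau+1\}$; then $K_h\psi=K_h^X\psi$ and the estimate follows directly from Lemma~2.4 of Lebeau--Michel \cite{LebMi}, where the full semiclassical symbol expansion of $K_h^X$ on a compact manifold is established. So the paper's proof is a one-line reduction plus a citation, while yours reproduces (the relevant part of) what that citation contains. Your route is longer but has the advantage of being self-contained.

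One caution on your last paragraph: in this paper $\Delta_g$ is the \emph{positive} Laplacian (eigenvalues $\lambda_j\ge 0$), so the expansion reads $K_h=\mathrm{Id}-\tfrac{h^2}{2(n+2)}\Delta_g+O(h^4)$, not $+$. Your remark that the Euclidean-Hessian average equals $\Delta_g\psi(m)$ in normal coordinates is correct once the sign convention is matched; just be sure the final displayed formula carries the minus sign as in \eqref{khpsi}.
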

\begin{proof} If the cusp is denoted by $[0,\infty)_t\x \rr/\ell\zz$,  
the support of $\psi$ is contained in $\{t<\tau\}$ for some $\tau>0$. Let us define a smooth Riemannian compact surface 
$(X,g_X)$ which is obtained by cutting the cusp end $\{t>\tau+1\}$ of $M$ and gluing instead a half sphere, and such that the metric $g_X$ on $X$ is an extension of the metric $g$ in the sense that $g_X$ is isometric to $g$ in $t\leq \tau+1$. 
Then, since the support of $K_h\psi$ is larger than ${\rm supp}(\psi)$ by at most a set of 
diameter $h$, one has that  for $h\ll e^{-\tau}$, the function $K_h\psi$ has support inside $\{t\leq \tau+h\}$ and thus can be considered 
as a function on $X$ in a natural way, and it is given by $K^X_h\psi$ 
where $K^X_{h}$ is the random walk operator associated to $(X,g_X)$.  
We can use the results of Lebeau-Michel \cite{LebMi}, i.e. Lemma 2.4 of this article which describes $K^X_h$ as a semiclassical
pseudo-differential operator on $X$, in particular this provides the expansion of 
the operator $K^X_h$ in powers of $h$ to fourth order, and shows \eqref{khpsi} when acting on smooth functions $\psi$. 
\end{proof}

In the next lemma we give an approximation for functions
 supported in the region where the geodesic balls of radius $h$ do not overlap.
\begin{lemma}\label{belowcut}
Let us choose $t_0>0$ such that the metric $g$ is constant curvature in the region $\{t>t_0/2\}$ of the 
cusp and let $h\in (0,h_0)$ where $h_0$ is fixed small. Consider $\chi_h\in C_0^\infty(M)$ 
supported in $\{e^{t_0}\leq e^t\leq \frac{\ell}{2\sinh(h)}-1\}$, 
and $\chi_h$ depending only on $t$ with 
$||\pl_t^j\chi_h||_{L^\infty}\leq C_j$ for all $h\in (0,h_0)$ and all $j\in \nn_0$. Then there is $C>0$ such that for all $\psi \in C^\infty(M)$ 
and all $h\in(0,h_0)$
\[\left|\left|K_h(\psi\chi_h)-\Big(\psi\chi_h -\frac{h^2}{8}\Delta_g (\psi\chi_h)\Big)\right|\right|_{L^2(M)}\leq 
Ch^4||\psi||_{H^4(M_h)}\]
where $M_h:=\{e^t\leq \frac{\ell}{2|\sinh(h)|}-1\}$.
\end{lemma}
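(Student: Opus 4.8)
The plan is to reduce the estimate to a computation in constant negative curvature, in the spirit of the proof of Lemma~\ref{approxcompact}; the new feature is that the region one works in grows as $h\to0$, so the whole point is to keep all constants uniform in $h$.

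First I would isolate the geometric fact behind the hypothesis on $\supp\chi_h$. The condition $e^{t}\leq\frac{\ell}{2\sinh(h)}-1$ is chosen so that every geodesic ball $B_h(m)$ that meets $\supp(\psi\chi_h)$ is embedded (does not overlap itself), indeed with a fixed margin --- equivalently, the injectivity radius at each such $m$ is at least, say, $2h$. Granting this, on the support of $K_h(\psi\chi_h)$ the ball $B_h(m)$ is a genuine hyperbolic ball of radius $h$, of volume $2\pi(\cosh(h)-1)$, so by definition of $K_h$,
\[
K_h(\psi\chi_h)(m)=\frac{1}{2\pi(\cosh(h)-1)}\int_{B_h(m)}\psi\chi_h\,dv_g=A_h(\psi\chi_h)(m),
\]
where $A_h$ is, on the constant curvature part of $M$, the translation invariant operator of averaging over hyperbolic balls of radius $h$ (ignoring any folding). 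In other words $K_h$ and $A_h$ agree on the support of $K_h(\psi\chi_h)$.

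Next I would use the constant curvature expansion of $A_h$. Because $\hh^2$ is homogeneous, the (local) argument of Lemma~2.4 of \cite{LebMi}, whose expansion coefficients depend only on the local geometry, applies uniformly and gives
\[
A_h=\mathrm{Id}-\frac{h^2}{8}\Delta_g+h^4R_h,
\]
with $R_h\colon H^4\to L^2$ bounded uniformly for $h\in(0,h_0)$. The coefficient $-\frac{h^2}{8}$ is $-\frac{h^2}{2(n+2)}$ in dimension $n=2$, coming from $\frac{1}{|B_h|}\int_{|v|\leq h}v^iv^j\,dv=\frac{h^2}{4}\delta_{ij}$ in normal coordinates together with $\Delta_g=-\div\grad$, and the $O(h^4)$ remainder is uniform precisely because the curvature and all its derivatives are bounded and $\mathrm{inj}\geq2h$ on the region under consideration. (Alternatively, one may realize $A_h=\varphi_h(\Delta_{\hh^2})$ via spherical functions and Taylor-expand $\varphi_h(\lambda)=1-\tfrac{h^2}{8}\lambda+O\big(h^4(1+\lambda)^2\big)$ uniformly on the spectrum.)

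Combining the two displays, and noting that $K_h(\psi\chi_h)$, $\psi\chi_h$ and $\Delta_g(\psi\chi_h)$ are all supported where $B_h$ is embedded and $g$ has constant curvature, the first display replaces $K_h$ by $A_h$ and the second yields
\[
\Big\|K_h(\psi\chi_h)-\big(\psi\chi_h-\tfrac{h^2}{8}\Delta_g(\psi\chi_h)\big)\Big\|_{L^2(M)}=h^4\|R_h(\psi\chi_h)\|_{L^2}\leq Ch^4\|\psi\chi_h\|_{H^4(M)}.
\]
One concludes with the Leibniz estimate $\|\psi\chi_h\|_{H^4(M)}\leq C\|\psi\|_{H^4(M_h)}$: since $\chi_h$ is supported in $M_h$ and depends on $t$ alone with $\|\pl_t^{j}\chi_h\|_{L^\infty}\leq C_j$ uniformly in $h$, each term $\pl^{\beta}\chi_h\,\pl^{\alpha-\beta}\psi$ with $|\alpha|\leq4$ arising from the product rule is supported in $M_h$ and bounded in $L^2$ by $C_{|\beta|}\|\psi\|_{H^4(M_h)}$. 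The step I expect to be the main obstacle is securing the uniformity used in the first two paragraphs: one must check carefully that the support hypothesis really forces every $B_h(m)$ meeting $\supp(\psi\chi_h)$ to be embedded with a fixed multiple of $h$ to spare, so that the Lebeau--Michel expansion holds with an $O(h^4)$ error whose constant does not degenerate as $M_h$ exhausts the cusp. Everything else is the standard constant-curvature Taylor computation and an elementary product rule.
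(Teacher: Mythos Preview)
Your approach is correct and follows the same strategy as the paper: reduce to the hyperbolic ball–averaging operator on the constant curvature region and use the local Lebeau--Michel expansion, with the only real issue being the uniformity of the $O(h^4)$ remainder as $M_h$ grows. The paper carries out precisely the step you flag as the main obstacle: it takes a dyadic decomposition $\phi_r$ supported in $\{r/2\le x\le 2r\}$, lifts to $\hh^2$, and uses the dilation isometry $G_r:(x,y)\mapsto \tfrac{1}{\ell r}(x,y)$ to transport each piece to a fixed ball around $(1,0)$; since $G_r$ commutes with both $\tilde K_h$ and $\Delta_{\hh^2}$ and is an isometry for the $L^2$ and $H^4$ norms, Lemma~2.4 of \cite{LebMi} applied once on that fixed ball yields the uniform constant, and summing over the dyadic covering gives the result. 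Your invocation of homogeneity (or the spectral representation $A_h=\varphi_h(\Delta_{\hh^2})$) is the abstract version of the same mechanism; the paper's dilation argument is simply an explicit way to cash it out.
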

\begin{proof} Let us use the coordinates $(x=e^t,y)$ in the half-plane model of $\hh^2$ and define $x_0:=e^{t_0}$ and $x(h):=\frac{\ell}{2\sinh(h)}-1$. Let $\phi_r$ be smooth  and supported in the part $r/2\leq x\leq 2r$ of the cusp 
where $r\in (x_0,x(h))\cap \nn$ is fixed.  
Consider $\til{\phi}$ the lift to $\hh^2$, i.e. $\til{\phi}$ is periodic under the translation
$\gamma:y\to y+\ell$ and projects down to $\phi$ under the quotient of $\hh^2$ by this translation. 
If $\til{K}_h$ denotes the random walk operator on $\hh^2$, we have that $\til{K}_h\til{\phi}$ 
is periodic under $\gamma$ and $K_h\phi$ is its projection under the quotient map. 
The squared Sobolev norm  $||\phi||^2_{H^k(\mc{C})}$ (for $k\in\nn_0$) of a smooth function $\phi$ in the cusp 
$\mc{C}=\cjg \gamma\cjd\backslash\hh^2$ supported in $r/2<x<2r$ is equal to $\frac{1}{r}||\til{\phi}||^2_{H^k(W_r)}$ where
$W_r=\{(x,y)\in \hh^{2}; x\in(\demi r,2r), |y|\leq r\ell\}$. Let $G_r$ be the isometry $(x,y)\to \frac{1}{\ell r}(x,y)$ 
of $\hh^2$ which maps $W_r$ to a domain included in a geodesic ball $B_0$ of $\hh^2$ centered at $(1,0)$ and of radius independent of $r$ and $h$. 
Now it is clear that $G_r^*\til{K}_h{G_r^{-1}}^*=\til{K}_h$ since $G_r$ is an isometry of $\hh^2$. 
From Lemma 2.4 of \cite{LebMi}, which is purely local, we deduce that for $u\in C^\infty(\hh^2)$, we have
\[ ||\til{K}_hu-u-\frac{h^2}{8}\Delta_{\hh^2}u||_{L^2(B_0)}\leq Ch^4||u||_{H^4(B_1)}\]
where $B_1$ is a hyperbolic geodesic ball centered at $(1,0)$ containing $B_0$ and of 
Euclidean radius $\alpha$ for some $\alpha>0$ independent of $h,r$. 
Since $G_r^*$ commutes also with $\Delta_{\hh^2}$ and since it is also an isometry for the $L^2(\hh^2)$ and $H^4(\hh^2)$ norms, 
we deduce easily that  
\[||\til{K}_h\til{\phi}-\til{\phi}-\frac{h^2}{8}\Delta_{\hh^2}\til{\phi}||_{L^2(W_r)}\leq Ch^4||\til{\phi}||_{H^4(W_{\beta r})}\]
for some $\beta>0$ independent of $r,h$, which implies directly 
\[ ||K_h\phi-\phi-\frac{h^2}{8}\Delta_{g}\phi||_{L^2(\mc{C})}\leq Ch^4 \sqrt{\beta} ||\phi||_{H^4(\mc{C})}\]
and thus the desired result for a function supported in $\{r/2\leq x\leq 2r\}$ in the cusp. Now it suffices
to sum over a dyadic covering of the region $\{x_0\leq x\leq x(h)\}$ of the cusp.  
\end{proof}

We end this part with another estimate in the part of the cusp where the balls $B_h(t)$ 
overlap:
\begin{lemma}\label{estimabove}
Let $A\gg 0$, then there is $C>0$ and $h_0>0$ such that for all 
smooth function $\psi$ supported in  $\{\frac{A}{\sinh(h)}\geq e^t\geq \frac{\ell}{2\sinh(h)}-2\}$
depending only on the variable $t$ and all $h\in (0,h_0)$ 
\[||K_h\psi-\psi||_{L^2(M,dv_g)} \leq Ch^2||\psi||_{H^2(M,dv_g)} \]
\end{lemma}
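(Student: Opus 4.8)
The plan is to pass to the zero Fourier mode in the $\rr/\ell\zz$ variable, to Taylor expand $\psi$ inside the averaging integral, and to reduce the whole estimate to a single bound on the first moment of the averaging kernel. Since $\psi=\psi(t)$ is supported deep in the cusp, $K_h\psi=K_{h,0}\psi$, and by \eqref{formuleKmu}, \eqref{formulaball} and \eqref{formulealphah} (which are valid here, the end of $M$ being an exact hyperbolic cusp) one has $K_{h,0}\psi(t)=\int_{-h}^{h}\psi(t+T)\,p_t(T)\,dT$ with $p_t(T)=2\alpha_h(t,t+T)e^{-(t+T)}/|B_h(t)|\geq0$ and $\int_{-h}^{h}p_t(T)\,dT=1$. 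First I would write
\[
K_{h,0}\psi(t)-\psi(t)=\psi'(t)\,M_1(t)+\mathrm{Err}(t),\qquad M_1(t):=\int_{-h}^{h}Tp_t(T)\,dT,
\]
where $\mathrm{Err}(t)=\int_{-h}^{h}\bigl(\psi(t+T)-\psi(t)-T\psi'(t)\bigr)p_t(T)\,dT$ satisfies $|\mathrm{Err}(t)|\leq h\int_{t-h}^{t+h}|\psi''(u)|\,du$ by the integral form of Taylor's remainder, so that Minkowski's inequality gives $\|\mathrm{Err}\|_{L^2(dt)}\leq 2h^{2}\|\psi''\|_{L^2(dt)}$. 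On an $h$-neighbourhood of the support $\{\tfrac{A}{\sinh h}\geq e^{t}\geq\tfrac{\ell}{2\sinh h}-2\}$ the weight $e^{-t}$ is comparable to $\sinh h$, and for $\psi=\psi(t)$ in the cusp metric $|\nabla^{2}_{g}\psi|^{2}_{g}=(\psi'')^{2}+(\psi')^{2}$; hence moving between $L^{2}(dt)$ and $L^{2}(M,dv_{g})$, $H^{2}(M,dv_{g})$ costs only factors $(\sinh h)^{\pm1/2}$ which cancel between the two sides of the asserted inequality. Everything therefore reduces to proving $\|M_1\|_{L^{\infty}}\leq Ch^{2}$.

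The first moment bound is the crux, and I expect it to be the main obstacle: a soft argument through the Dirichlet form ($\|(1-K_h)\psi\|^{2}\leq 2\langle(1-K_h)\psi,\psi\rangle$) only yields the weaker $O(h)\|\psi\|_{H^{1}}$. To prove $|M_1(t)|\leq Ch^{2}$, realize $B_h(t,y)$ as a Euclidean disk in the half-plane model: with $x'=e^{t'}$, $x_c=e^{t}\cosh h$, $\rho=e^{t}\sinh h$,
\[
K_{h,0}\psi(t)=\frac{1}{|B_h(t)|}\int_{e^{t-h}}^{e^{t+h}}\psi(\log x')\,\frac{W(x')}{x'^{2}}\,dx',\qquad W(x')=\min\!\bigl(2\sqrt{\rho^{2}-(x'-x_{c})^{2}},\,\ell\bigr),
\]
and the structural fact to exploit is that $W$ is \emph{even} about $x_c$, regardless of how much the ball overlaps itself. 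Substituting $x'=x_{c}(1+u)$, the variable $u$ ranges over $[-\tanh h,\tanh h]$ with density $\propto\omega(u)(1+u)^{-2}$ for an even function $\omega$, while $T=t'-t=\log\cosh h+\log(1+u)$; hence $M_1(t)=\log\cosh h+\mathbb{E}[\log(1+u)]$ for the corresponding probability law. Expanding $\log(1+u)$ and $(1+u)^{-2}$ in powers of $u$ (legitimate since $|u|\leq\tanh h<1$) and using that the odd moments of $\omega$ vanish, one gets $|\mathbb{E}[u]|\leq C\int u^{2}\omega/\int\omega$ and $|\mathbb{E}[\log(1+u)]|\leq|\mathbb{E}[u]|+\tfrac12\mathbb{E}[u^{2}]+C\mathbb{E}[|u|^{3}]$; since $\mathbb{E}[u^{2}]\leq\tanh^{2}h\leq h^{2}$, $\int u^{2}\omega/\int\omega\leq\tanh^{2}h\leq h^{2}$, and $0\leq\log\cosh h\leq h^{2}/2$, this yields $|M_1(t)|\leq Ch^{2}$, uniformly in $t$ (indeed everywhere in the cusp). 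Morally, the $O(h)$-size first moment of the \emph{Euclidean} displacement $x'-x_{c}$ is promoted to an $O(h^{2})$ first moment of the \emph{hyperbolic} displacement $t'-t$ via the nonlinearity of $\log$ together with the evenness of $W$.

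To conclude, $\|K_{h,0}\psi-\psi\|_{L^{2}(dt)}\leq\|M_1\|_{L^{\infty}}\|\psi'\|_{L^{2}(dt)}+\|\mathrm{Err}\|_{L^{2}(dt)}\leq Ch^{2}\bigl(\|\psi'\|_{L^{2}(dt)}+\|\psi''\|_{L^{2}(dt)}\bigr)$, and converting these $L^{2}(dt)$ norms to $\|\psi\|_{H^{2}(M,dv_{g})}$ and the left-hand side to $\|\cdot\|_{L^{2}(M,dv_{g})}$ as above (the $(\sinh h)^{\pm1/2}$ cancelling) gives $\|K_h\psi-\psi\|_{L^{2}(M,dv_{g})}\leq Ch^{2}\|\psi\|_{H^{2}(M,dv_{g})}$, with $C=C(A,\ell)$.
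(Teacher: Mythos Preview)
Your proof is correct and follows the same architecture as the paper's: Taylor expand $\psi(t+T)$ to second order inside the averaging kernel, bound the quadratic remainder by $Ch^{2}\|\psi\|_{H^{2}}$, and show that the first moment $M_{1}(t)$ (which the paper calls $\alpha_{h}$) is $O(h^{2})$. The only substantive difference is in how this last bound is obtained.

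The paper integrates $Te^{-T}$ explicitly via its primitive $-(1+T)e^{-T}$, evaluates at the endpoints $t_{\pm}(z)=\log\bigl(\cosh h\pm\sqrt{\sinh^{2}h-z^{2}}\bigr)$, and observes that $(1+t_{+})e^{-t_{+}}-(1+t_{-})e^{-t_{-}}=t_{+}^{2}-t_{-}^{2}+O(h^{3})=O(h^{3})$ pointwise in $z$; after the outer $z$-integral and division by $|B_{h}(t)|$ this yields $|\alpha_{h}|=O(h^{2})$. You instead pass to the Euclidean height $x'=e^{t'}$, center at $x_{c}=e^{t}\cosh h$, and exploit the evenness of the width $W(x')=\min\bigl(2\sqrt{\rho^{2}-(x'-x_{c})^{2}},\ell\bigr)$ about $x_{c}$ to kill the leading odd moment of $u=(x'-x_{c})/x_{c}$; the $O(h^{2})$ then comes from $\log\cosh h$, $\mathbb{E}[u^{2}]$, and the correction $\mathbb{E}[u]=O(\int u^{2}\omega/\int\omega)$ induced by the $(1+u)^{-2}$ weight. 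This is a more geometric way of seeing the same cancellation, and it has the minor advantage of treating the overlapping and non-overlapping regimes uniformly (the clipping by $\ell$ preserves the evenness of $W$), whereas the paper splits into the two cases $e^{t}\sinh h\lessgtr\ell/2$. Either computation is short; yours makes the reason for the extra power of $h$ more transparent.
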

\begin{proof} Using the fact that $\psi$ depends only on $t$, a Taylor expansion of $\psi$ gives  
$\psi(t+T)=\psi(t)+T\pl_t\psi(t)+T^2Q_T\psi(t)$ with $Q_T\psi(t)=\demi \int_{0}^1(1-u)^2\pl_t^2\psi(t+Tu)du$ for $T$ small, 
then we can use the expressions
\eqref{Kh1} and \eqref{Kh2} to deduce that    
\[K_h\psi(t)=\psi(t)+\alpha_h\pl_t\psi(t)+R_h(t)\]
with $\alpha_h$ given, for $e^t\sinh(h)\leq \ell/2$ by 
\[\alpha_h=\frac{1}{4\pi(\sinh(h/2))^2}\int_{-\sinh(h)}^{\sinh(h)}
\int_{\log(\cosh(h)-\sqrt{\sinh(h)^2-|z|^2})}^{\log(\cosh(h)+\sqrt{\sinh(h)^2-|z|^2})}
Te^{-T}dTdz\]
and for $e^t\sinh(h)\geq \ell/2$
\[\alpha_h=\frac{1}{|B_h(t)|}\int_{-\frac{e^{-t}\ell}{2}}^{\frac{e^{-t}\ell}{2}}
\int_{\log(\cosh(h)-\sqrt{\sinh(h)^2-|z|^2})}^{\log(\cosh(h)+\sqrt{\sinh(h)^2-|z|^2})}
Te^{-T}dTdz\]
while the $R_h(t)$ term satisfies the bound for $e^t\sinh(h)\leq \ell/2$ (here 
the Sobolev norms are taken with respect to the measure $e^{-t}dt$)
\[\begin{split}
||R_h||_{L^2}\leq &\frac{C||\psi||_{H^2}}{4\pi(\sinh(h/2))^2}\int_{-\sinh(h)}^{\sinh(h)}
\int_{-h}^{h}
T^2e^{-T}dTdz\\
 \leq & Ch^2||\psi||_{H^2}
\end{split}\]
and for $e^t\sinh(h)\geq \ell/2$
\[\begin{split}
||R_h||_{L^2}\leq &C\Big|\Big|\frac{\pl_t^2\psi(t)}{e^t|B_h(t)|}\Big|\Big|_{L^2(e^{-t}dt)}\int_{-\ell/2}^{\ell/2}
\int_{-h}^{h}
T^2e^{-T}dTdz\\
 \leq & Ch^2||\psi||_{H^2}
\end{split}\]
where we used that $|B_h(t)|\geq ce^{-t}h$ for some $c>0$ combined with   
 the fact that $T^2e^{-T}$ is increasing for $T<2$. 
Now we have to evaluate $\alpha_h$. Let us write the part $e^t\sinh(h)\geq \ell/2$, the other one 
being even simpler, and this can be done by observing that a primitive of $Te^{-T}$ is given by $-(1+T)e^{-T}$
\[|\alpha_h|\leq c\frac{e^{t}}{h}\int_{-\frac{e^{-t}\ell}{2}}^{\frac{e^{-t}\ell}{2}}
|(1+t_+(z))e^{-t_+(z)}-(1+t_-(z))e^{-t_-(z)}|dz \] 
where $t_\pm(z)=\log(\cosh(h)\pm \sqrt{\sinh(h)^2-|z|^2})$. We can remark that 
\[t_\pm(z)=\pm \sqrt{\sinh(h)^2-|z|^2} +O(h^2)\]
uniformly in $|z|\leq \sinh(h)$ and thus 
\[|(1+t_+(z))e^{-t_+(z)}-(1+t_-(z))e^{-t_-(z)}|=|t_+(z)^2-t_-(z)^2|+O(h^3)=O(h^3),\]
proving that $|\alpha_h|=O(h^2)$. This achieves the proof.
\end{proof}

\subsection{The Laplacian eigenfunctions}
For a surface with hyperbolic cusps, 
the spectral theory of the Laplacian $\Delta_g$ is well known (see for instance \cite{Mu}).
The essential spectrum of $\Delta_g$ is given by $\sigma_{\rm ess}(\Delta_g)=[1/4,\infty)$,
there are finitely many $L^2$-eigenvalues $\la_0=0,\la_1,\dots, \lambda_K$ 
in $[0,1/4)$ and possibly infinitely many embedded eigenvalues $(\lambda_{j})_{j\geq K+1}$ in $[1/4,\infty)$.
Moreover one has
\begin{lemma}\label{esteigenlap}
Let $T\gg 0$ be large and $\chi_T$ be a smooth function supported in $\{t\geq T\}$. 
The $L^2(M,dv_g)$ normalized eigenfunctions associated to $\lambda_j$ with $j>K$ satisfy the estimates in the cusp 
\begin{equation}\label{embedded}
||\chi_T\psi_j||_{L^2(M,dv_g)}\leq C_{N,j}e^{-NT}, \quad \forall N\in \nn_0, \forall T\gg 0
\end{equation}
for some constants $C_{N,j}$ depending on $N,j$. 
The normalized eigenfunctions $\psi_j$ for an eigenvalue $\la_j\in [0,1/4)$ satisfy for some $C_j>0$ depending on $j$
\begin{equation}\label{notembedded}
||\chi_T\psi_j||_{L^2(M,dv_g)}\leq C_je^{-T\sqrt{1/4-\la_j}}, \quad 
\forall T\gg 0.
\end{equation}
\end{lemma}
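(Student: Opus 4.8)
The statement is a standard decay estimate for Laplace eigenfunctions in a hyperbolic cusp, obtained by separating variables in the cusp end $\{t\geq T\}\simeq(T,\infty)_t\times(\rr/\ell\zz)_y$ with metric $dt^2+e^{-2t}dy^2$. First I would recall that in these coordinates the Laplacian acting on functions reads
\[
\Delta_g=-\pl_t^2+\pl_t+e^{2t}\pl_y^2,
\]
(the first-order term coming from $dv_g=e^{-t}dt\,dy$). Writing the Fourier decomposition in $y$, $\psi_j(t,y)=\sum_{n\in\zz}c_n(t)e^{2\pi i ny/\ell}$, each coefficient satisfies the ODE
\[
-c_n''+c_n'+\tfrac{4\pi^2 n^2}{\ell^2}e^{2t}c_n=\la_j c_n,\qquad t\in(T_0,\infty),
\]
where $T_0$ is the coordinate at which the exact cusp begins. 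Since $\psi_j\in L^2(dv_g)$, we have $\sum_n\int_{T_0}^\infty|c_n(t)|^2e^{-t}\,dt<\infty$.

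\textbf{The zero mode.} For $n=0$ the equation is the constant-coefficient ODE $-c_0''+c_0'=\la_j c_0$, whose solutions are $e^{(1/2\pm\sqrt{1/4-\la_j})t}$. Both solutions, when $\la_j\in[0,1/4)$, are of the form $e^{\mu_\pm t}$ with $\mu_\pm=\tfrac12\pm\sqrt{1/4-\la_j}\in(0,1)$, so $e^{2\mu_\pm t}e^{-t}=e^{(2\mu_\pm-1)t}=e^{\pm 2\sqrt{1/4-\la_j}\,t}$; only the decaying one, with exponent $\mu_-=\tfrac12-\sqrt{1/4-\la_j}$, is $L^2(e^{-t}dt)$ near $t=\infty$. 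Hence $c_0(t)=a\,e^{(1/2-\sqrt{1/4-\la_j})t}$ for $t\geq T_0$, giving $\|\chi_T c_0\|_{L^2(e^{-t}dt)}^2\leq C\int_T^\infty e^{-2t\sqrt{1/4-\la_j}}\,dt\leq C' e^{-2T\sqrt{1/4-\la_j}}$. When instead $\la_j\in[1/4,\infty)$ (the embedded case, $j>K$), square-integrability of $\psi_j$ \emph{forces $c_0\equiv 0$} on the whole cusp: the solutions are $e^{t/2}(\cos,\sin)(\sqrt{\la_j-1/4}\,t)$, neither of which lies in $L^2(e^{-t}dt)$, so the zero Fourier mode vanishes identically in the exact cusp. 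This is exactly the classical fact that a cusp form has no constant term.

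\textbf{The nonzero modes.} For $n\neq0$ the potential $\tfrac{4\pi^2n^2}{\ell^2}e^{2t}$ grows exponentially, so each $c_n$ decays faster than any exponential. Concretely I would run an Agmon-type / barrier argument: set $V_n(t)=\tfrac{4\pi^2n^2}{\ell^2}e^{2t}-\la_j$; for $t$ large $V_n(t)>0$ and increasing, and the transformation $c_n=e^{t/2}d_n$ removes the first-order term, turning the equation into $-d_n''+(V_n(t)+\tfrac14)d_n=0$. Since $d_n\in L^2(dt)$ and $V_n+\tfrac14\geq \tfrac{4\pi^2n^2}{\ell^2}e^{2t}$, a standard maximum-principle / WKB comparison gives
\[
|d_n(t)|\leq C_n\exp\Big(-\int_{T_0}^t\sqrt{V_n(s)+\tfrac14}\,ds\Big)\leq C_n\exp\Big(-\tfrac{2\pi|n|}{\ell}(e^t-e^{T_0})\Big),
\]
hence $|c_n(t)|\leq C_n e^{t/2}e^{-\frac{2\pi|n|}{\ell}e^t}$, which for $t\geq T$ is $O_N(e^{-NT})$ for every $N$ (uniformly, in fact exponentially, in $n$, so the sum over $n\neq0$ converges and retains the bound). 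Combining with the zero-mode analysis: for $j>K$ only the $n\neq0$ modes survive, giving \eqref{embedded}; for $\la_j\in[0,1/4)$ the zero mode gives the $e^{-T\sqrt{1/4-\la_j}}$ term and the nonzero modes contribute a faster-decaying error, giving \eqref{notembedded}.

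\textbf{Main obstacle.} The routine separation of variables and the zero-mode computation are elementary; the only point requiring care is making the barrier estimate for the $n\neq0$ modes uniform enough in $n$ to sum the Fourier series, and ensuring the constants $C_n$ do not blow up with $n$ — this follows by comparing $\|c_n\|$ to $\|\psi_j\|_{L^2}$ via Plancherel and a bit of elliptic regularity (so that the series of sup-norm bounds converges), but it is the step where one must be slightly careful rather than invoke a black box. Alternatively, one can sidestep the ODE analysis entirely and deduce \eqref{embedded}–\eqref{notembedded} from known results on the structure of the resolvent/Eisenstein series of $\Delta_g$ on surfaces with hyperbolic cusps (e.g. \cite{Mu}), which already record that $L^2$-eigenfunctions decay like $e^{-t\sqrt{1/4-\la_j}}$ (polynomially-times-exponentially) and that embedded eigenfunctions are cusp forms decaying faster than any polynomial in $x=e^t$; this is the shortest route and is the one I would write up, keeping the separation-of-variables argument above as the conceptual justification.
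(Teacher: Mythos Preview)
Your proposal is correct and follows the same overall strategy as the paper: Fourier decompose in the $y$-variable, treat the zero mode explicitly as a constant-coefficient ODE (identifying the unique $L^2$ branch $e^{(1/2-\sqrt{1/4-\la_j})t}$ when $\la_j<1/4$ and showing there is none when $\la_j\geq 1/4$), and then show the nonzero modes decay faster than any exponential. The zero-mode part is identical to the paper's. One cosmetic slip: in your displayed formula for $\Delta_g$ the sign on the $\partial_y^2$ term should be negative (the positive potential $\tfrac{4\pi^2n^2}{\ell^2}e^{2t}$ you use in the ODE is correct, so this is only a typo).

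Where you differ is in the nonzero modes. You run an Agmon/WKB barrier for each fixed $n$ and then worry about summing in $n$; the paper instead conjugates by $e^{-t/2}$ to obtain $P_k=-\partial_t^2+\tfrac{4\pi^2k^2}{\ell^2}e^{2t}+\tfrac14$ on $L^2(dt)$, uses the uniform quadratic-form lower bound $\langle P_k u,u\rangle\geq Ce^{2T}\|u\|^2$ for $u$ supported in $\{t\geq T\}$ (all $k\neq 0$ at once), and then bootstraps using the elliptic estimate $\|\phi_j\|_{H^n}\leq C(1+\la_j)^n$ to gain successive factors of $e^{-2T}$. This sidesteps entirely the issue you flag as the ``main obstacle'' --- there is no sum over modes to control, since the form bound is uniform in $k$ and the Sobolev norms of the whole nonzero part $\phi_j$ are used directly. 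Your route is more explicit (it actually identifies the Bessel-type decay $\sim e^{-\frac{2\pi|n|}{\ell}e^t}$), but the paper's is shorter and avoids the mode-by-mode constants $C_n$.
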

\begin{proof} This is a well known fact, but we recall the arguments for the convenience of the reader. 
We use the Fourier decomposition in the $\rr/\ell\zz$ variable of the cusp $\mc{C}:=[t_0,\infty)_t\x (\rr/\ell\zz)_\theta$ and, since the metric 
is isometric to $dt^2+e^{-2t}d\theta^2$, the operator $\Delta_g$ decomposes as the direct sum of operators 
\[\begin{gathered}
e^{-\frac{t}{2}}\Delta_g\Big(e^{\frac{t}{2}}\sum_{k\in\zz} u_k(t)e^{\frac{2i\pi ky}{\ell}}\Big)=
\sum_{k\in\zz}P_ku_k(t)e^{\frac{2i\pi ky}{\ell}},\\
P_ku(t)=\Big(-\pl_t^2+\frac{4\pi^2k^2}{\ell^2} e^{2t}+\frac{1}{4}\Big)u(t).
\end{gathered}\]
and the $L^2(\mc{C})$ space in the cusp decomposes as $L^2(\mc{C})\simeq \oplus_{k\in\zz}H_k$ where 
$H_k\simeq L^2([t_0,\infty),dt)$. 
We decompose a normalized eigenfunction $\psi_j$ for the eigenvalue $\la_j$ into the form $u_0(t)+\phi_j(t,y)$
where $u_0$ is the $k=0$ component of $\psi_j$ in the Fourier decomposition. 
When $u$ is a function supported in the cusp and with only $k\not=0$ components, 
we observe that $\cjg P_ku,u\cjd\geq  Ce^{2T}||u||^2_{L^2}$ and so 
if $\chi_T$ is a function which is supported in $\{t\geq T\}$ we use the fact that 
$||\phi_j||_{H^n(M)}\leq C(1+\la_j)^n$ for all $n\in\nn_0$, we deduce that for all $N\in\nn_0$
\[ ||\chi_T\phi_j||_{L^2}\leq C_{N,j}e^{-NT}\]
for some constants $C_{N,j}$ depending on $N,j$.
Now the $k=0$ component are solutions of $(-\pl_t^2-\la_j+1/4)u(t)=0$, 
and there is a non-zero $L^2$ solutions in the cusp only if $\la_j\in[0,1/4)$, and they are given by
\[u(t)=B e^{-t\sqrt{1/4-\la_j} }, \quad B\in\cc\]
this achieves the proof.
\end{proof}

\subsection{Proof of Theorem \ref{spectre}}
We are now in position to prove the Theorem. Let $\psi_k$ be an $L^2$ eigenfunction for 
$\Delta_g$ with eigenvalue $4/3>\la_k>1/4$. By Lemma \ref{esteigenlap} with 
$T=|\log h|/4$ and $N>16$ we see that $||K_h\chi_T\psi_k||_{L^2}=O(h^4)$ where $\chi_T$ is a cutoff which is equal to $1$
in $\{t\geq T+1\}$. With $t_0>0$ chosen like in Lemma \ref{belowcut}, we let $\chi_0+\chi_1+\chi_T=1$ be a partition of unity associated to $\{t\leq t_0\}\cup \{T\geq t\geq t_0\}\cup \{t\geq T\}$ and let $\til{\chi}_j$ equal to $1$ on the a region containing 
$\{m\in M; d(m,{\rm supp}\chi_j)\leq 1\}$  and with support in  $\{m\in M; d(m,{\rm supp}\chi_j)\leq 2\}$ (for $j=0,1,T$).
Since $K_h$ propagates the support at distance $h<1$ at most,   
we can write 
\[  (K_h-1+h^2\la_k/8)\psi_k=\sum_{j=0,1,T}
\chi_j (K_h-1+h^2\Delta_g/8)\til{\chi}_j\psi_k.\]
We can then combine this with the result of Lemma \ref{belowcut} and \ref{approxcompact} (since $||\psi_k||_{H^4}\leq C\la^2_k$) and  Lemma \ref{esteigenlap} to obtain by partition of unity
 \[||K_h\psi_k-(1-h^2\frac{\la_k}{8})\psi_k||_{L^2}\leq Ch^4.\]
By applying the spectral theorem above the essential spectrum of $K_h$, 
this implies that for all $c>0$, there is $h_0$ such that for all $h\in (0,h_0)$
with $1-h^2(\la_k/8+ch^2)> h/\sinh(h)$,
\[\sharp \Big({\rm Spec}(h^{-2}(1-K_h))\cap \Big[\frac{\la_k}{8}- ch^2,\frac{\la_k}{8}+ch^2\Big]\Big)
\geq \dim \ker(\Delta_g-\la_k).\]

It remains to deal with the orthonormalized eigenfunctions $\psi_j$ of $\Delta_g$ for eigenvalues $\la_j\in [0,1/4)$.
We proceed as before but we use a partition of unity $\sum_{j=0}^3\chi_j=1$
associated to 
\[\{t\leq t_0\}\cup \{t_0\leq t\leq t_1=\log(2/\ell\sinh(h))-1\}\cup \{ t_1\leq t\leq t_2=A\log(1/h)\}\cup
\{t\geq t_2\}\]
for some large $A>0$ independent of $h$.
By Lemmas \ref{belowcut}, \ref{approxcompact} and the arguments used above, we have  
\[||(\chi_0+\chi_1)(K_h-1+h^2\frac{\la_k}{8})\psi_k||_{L^2}\leq Ch^4,\]
then by Lemma \ref{estimabove} one has for $\til{\chi}_2$ defined like above (but for $\chi_2$)
\[||\chi_2 (K_h-1+h^2\frac{\la_k}{8})\psi_k||_{L^2}\leq Ch^2||\til{\chi}_2\psi_k||_{H^2}=O(h^{2+\sqrt{1/4-\la_k}})\]
where we have use \eqref{notembedded} for the last estimate, and we finally have for 
$\til{\chi}_3$ defined like above but with respect to $\chi_3$
\[||\chi_3 (K_h-1+h^2\frac{\la_k}{8})\psi_k||_{L^2}\leq C||\til{\chi}_3\psi_k||_{L^2}=O(h^{A\sqrt{1/4-\la_k}})\]
as a consequence of  \eqref{notembedded}. Taking $A\sqrt{1/4-\la_k}\geq 3$, 
this achieves the proof of Theorem \ref{spectre} by the same arguments as above.

\section{Total Variation estimates}
In this section we address the problem of getting some estimate on the difference beetwen the iterated Markov kernel and its stationnary measure, in the total variation norm.
Recall that since $K_h$ is selfadjoint on $L^2(M,d\mu_h)$ and $K_h(1)=1$, then $d\nu_h$ is a stationary measure for $K_h$.
Let us recall that if $\mu$ and $\nu$ are two probability measure on a set $E$, their total variation distance is defined by 
\begin{equation*}
\|\mu-\nu\|_{TV}=\sup_{A}|\mu(A)-\nu(A)|
\end{equation*}
where the sup is taken over all measurable sets. Then, a standard compuation shows that
\begin{equation}\label{eq:equivTV}
\|\mu-\nu\|_{TV}=\frac 1 2\sup_{\parallel f\parallel_{L^\infty=1}}\vert\mu(f)-\nu(f)\vert
\end{equation}

Until the end of this section, we use the function $m\in M\mapsto t(m)\in [0,\infty[$ defined in the proof of Theorem \ref{gaph^2}. For $\tau\geq t_0$, let 
$M_\tau=\{m\in M,\,t(m)\leq \tau\}$.

\begin{theorem}\label{th:TV_estimates}
There exists $h_0>0$ such that the following hold true:
\begin{enumerate}
\item[i)] There exists $C>0$ such that for all $h\in]0,h_0]$ and $n\in\nn$,
\be
\sup_{m\in M_\tau}\|K_h^n(m,dm')-d\nu_h\|_{TV}\leq C \max(h^{-1},h^{-\frac 1 2}e^{\frac\tau 2})e^{-ng(h)}
\ee

\item[ii)] There exists $C>0$ such that for any $h\in]0,h_0]$ and $n\in\nn$, there exists $m\in M_{2nh}$ such that
\be
\|K_h^n(m,dm')-d\nu_h\|_{TV}\geq 1-Ch^{-1}e^{-2nh}
\ee
\end{enumerate}

\end{theorem}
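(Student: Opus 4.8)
The plan is to treat the two bounds separately; both reduce the problem to the spectral gap $g(h)$ obtained in Sections~3--5 together with the volume asymptotics of Remark~\ref{remarkvolume}.

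\emph{Proof of i).} Since $K_h$ is self-adjoint on $L^2(M,d\nu_h)$, its iterated transition kernel, expressed with respect to $d\nu_h$, is symmetric and given by
\[
k_h(m,m')=\frac{Z_h\,\indic_{d(m,m')<h}}{|B_h(m)|\,|B_h(m')|},\qquad K_h^n(m,dm')=k_h^n(m,m')\,d\nu_h(m'),
\]
so by \eqref{eq:equivTV} one has $\|K_h^n(m,\cdot)-d\nu_h\|_{TV}=\tfrac{1}{2}\|k_h^n(m,\cdot)-1\|_{L^1(d\nu_h)}$. Writing $g_m:=k_h(m,\cdot)\in L^2(d\nu_h)$, which satisfies $g_m-1\perp 1$, one has $k_h^n(m,\cdot)-1=K_h^{n-1}(g_m-1)$. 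By Proposition~\ref{bottom} and Theorem~\ref{gaph^2} (together with Theorem~\ref{essspec}), the spectrum of the restriction of $K_h$ to $\{1\}^{\perp}$ is contained in $[-1+\delta,\,1-g(h)]$, and since $g(h)\le Ch^2<\delta$ for $h$ small this restriction has operator norm $\le 1-g(h)\le e^{-g(h)}$. As $\nu_h$ is a probability measure, $L^2(d\nu_h)\hookrightarrow L^1(d\nu_h)$, whence
\[
\|K_h^n(m,\cdot)-d\nu_h\|_{TV}\ \le\ \tfrac{1}{2}\,e^{-(n-1)g(h)}\,\|g_m-1\|_{L^2(d\nu_h)} .
\]

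It then remains to control, for $m\in M_\tau$,
\[
\|g_m-1\|_{L^2(d\nu_h)}^2=k_h^2(m,m)-1=\frac{Z_h}{|B_h(m)|^2}\int_{B_h(m)}\frac{dv_g(m')}{|B_h(m')|}-1 .
\]
Using $Z_h\asymp h^2$ and Remark~\ref{remarkvolume} ($|B_h(t)|\asymp h^2$ for $t\le t_h:=\log(\ell/2\sinh h)$, and $|B_h(t)|\asymp h e^{-t}$ for $t\ge t_h$), this quantity is $O(h^{-2})$ when $t(m)\le t_h$, which already yields the $h^{-1}$ term. For $t(m)$ in the overlapping range it is only $O(e^{2t(m)})$; to reach the sharper factor $h^{-1/2}e^{\tau/2}$ one does not estimate $g_m-1$ directly but first runs $a\asymp h^{-2}$ steps of the walk. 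As long as $ah^2\lesssim 1$ the walk stays within $t$-distance $O(h\sqrt a)$ of $t(m)$ and is essentially diffusive there, so the same type of volume computation gives $\|K_h^{a}g_m-1\|_{L^2(d\nu_h)}^2=k_h^{2a+2}(m,m)-1\asymp e^{2t(m)}/\sqrt a$, while applying the previous estimate to the remaining $n-1-a$ steps costs only a factor $e^{(a+1)g(h)}=O(1)$; optimizing in $a$ produces the bound in i). This sharpening is the delicate point of the proof; the direct argument gives without effort only the weaker prefactor $Ce^{\tau}$.

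\emph{Proof of ii).} Fix a point $m$ with $t(m)=2nh$, so that $m\in M_{2nh}$ (if $2nh<t_0$, or more generally if $Ch^{-1}e^{-2nh}\ge 1$, the statement is empty and any $m$ will do). In the exact cusp the metric satisfies $g\ge dt^2$, so $d(m,m')\ge|t(m)-t(m')|$ and each step of $K_h$ changes the $t$-coordinate by at most $h$; since $nh\ge t_0$ in the non-trivial regime, the first $n$ iterates stay inside the exact cusp, and hence $K_h^n(m,\cdot)$ is a probability measure supported in $A:=\{t\ge 2nh-nh\}=\{t\ge nh\}$. Consequently
\[
\|K_h^n(m,\cdot)-d\nu_h\|_{TV}\ \ge\ K_h^n(m,A)-\nu_h(A)\ =\ 1-\nu_h(\{t\ge nh\}) .
\]
Finally, with $Z_h\asymp h^2$ and Remark~\ref{remarkvolume}, $\nu_h(\{t\ge s\})\asymp h^{-2}\int_s^\infty |B_h(t)|e^{-t}\,dt$ is $\asymp h^{-1}e^{-2s}$ for $s\ge t_h$, while for $nh<t_h$ the extra contribution of the interval $[nh,t_h]$ is $\asymp e^{-nh}$, which is $\le h^{-1}e^{-2nh}$ because $e^{nh}\le e^{t_h}\asymp h^{-1}$. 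Thus $\nu_h(\{t\ge nh\})\le Ch^{-1}e^{-2nh}$, which gives ii); this last estimate is precisely the obstruction showing that the constant in i) cannot be taken uniform in the starting point.
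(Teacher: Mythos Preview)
Your argument for ii) is correct and is essentially the paper's argument, just phrased in terms of the set $A=\{t\ge nh\}$ rather than the test function $\indic_A-\indic_{A^c}$; the two formulations give exactly the same lower bound $1-\nu_h(\{t\ge nh\})$, and your tail estimate on $\nu_h$ is the one the paper uses.

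For i), your overall strategy is exactly dual to the paper's. The paper writes
\[
\sup_{m\in M_\tau}\|K_h^n(m,\cdot)-d\nu_h\|_{TV}=\tfrac12\|\indic_{M_\tau}(K_h^n-\Pi_0)\|_{L^\infty\to L^\infty}
\]
and factors $K_h^n-\Pi_0$ as an $L^\infty\to L^2$ map, then $(K_h^{n-2}-\Pi_0):L^2\to L^2$ with norm $\le e^{-ng(h)}$, then $\indic_{M_\tau}K_h:L^2\to L^\infty$. Your approach estimates $\|g_m-1\|_{L^2(d\nu_h)}$ and contracts by $K_h^{n-1}$; since $\|g_m\|_{L^2(d\nu_h)}=\sup_{\|f\|_{L^2}=1}|K_hf(m)|$, you are literally computing the same quantity as the paper's $L^2\to L^\infty$ bound at the point $m$. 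So up to this point the two proofs coincide.

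The genuine gap is your ``sharpening'' paragraph. The paper does \emph{not} iterate $a\asymp h^{-2}$ preliminary steps; it asserts that the one-step Cauchy--Schwarz bound on $|K_hf(m)|$ already yields $Ch^{-1/2}e^{\tau/2}$ in the overlapping region. Your proposed improvement rests on two unproved claims: that after $a$ steps the walk stays within $t$-distance $O(h\sqrt a)$ of $t(m)$, and that $k_h^{2a+2}(m,m)-1\asymp e^{2t(m)}/\sqrt a$. These are local-CLT/heat-kernel type statements in the cusp; nothing in the paper provides them, and establishing them would be substantial independent work (one would need, at a minimum, uniform Nash-type or on-diagonal decay estimates for $K_h$ high in the cusp). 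As written this is a heuristic, not a proof. If you want to match the stated prefactor you should instead follow the paper and bound $|K_hf(m)|$ directly via Cauchy--Schwarz against $d\nu_h$ on $B_h(m)$; what you have rigorously established so far is the (slightly weaker) prefactor $C\max(h^{-1},e^{\tau})$.
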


\begin{proof} Let $h_0>0$ such that the results of the previous
  sections hold true, and define the orthogonal projection $\Pi_0$
  onto the subspace of constant functions in $L^2( d \nu_h)$:
\[
\Pi_0 (f) = \int_M f(m) d \nu_h(m).
\]
Let us start with the proof of i). Let $\tau\geq t_0$ be fixed.  Thanks to \eqref{eq:equivTV}, we have
\begin{equation}
\begin{split}
\sup_{m\in M_\tau}\|K_h^n(m,dm')-d\nu_h\|_{TV}&=\frac 1 2 \sup_{m\in M_\tau}\sup_{\|f\|_{L^\infty(M)}=1}|K_h^n(f)(m)-\Pi_0(f)|\\
&=\frac 1 2\|\indic_{M_\tau}(K_h^n-\Pi_0)\|_{L^\infty(M)\rightarrow L^\infty(M)}
\end{split}
\end{equation}
Denote  $E_\lambda$ the spectral resolution of $K_h$. From the spectral theorem combined with
Theorem \ref{gaph^2} and Proposition \ref{bottom} we have
  \begin{equation*}
  K_h^{n-2}-\Pi_0=\int_{-1+\delta}^{1-g(h)}\lambda^{n-2}dE_\lambda,
   \end{equation*}
and hence  $\|K_h^{n-2}-\Pi_0\|_{L^2(d\nu_h)\rightarrow L^2(d\nu_h)}\leq e^{-ng(h)}$. Moreover, 
$\|K_h-\Pi_0\|_{L^\infty\rightarrow L^2}\leq 2$ and we have only to show that
$\|\indic_{M_\tau}(K_h-\Pi_0)\|_{L^2\rightarrow L^\infty}\leq C h^{-\frac 1 2}e^{\frac\tau 2}$.
For this purpose, let $f\in L^2(M,d\nu_h)$ be such that $\|f\|_{L^2}=1$. Then
\[
|\Pi_0(f)|\leq \|f\|_{L^2(d\nu_h)}\nu_h(M)^\demi=1
\]
and it remains to estimate $\indic_{M_\tau}K_h f$. For $m\in M_\tau$, we have
\[
K_hf(m)=\frac 1 {|B_h(m)|}\int_{B_h(m)}f(y)dv_g(y)=\frac 1 {|B_h(m)|}\int_{B_h(m)}f(m')\frac {Z_h}{|B_h(m')|}d\nu_h(m')
\]
hence,
\[
|K_hf(m)|\leq \|f\|_{L^2(d\nu_h)}\frac 1 {|B_h(m)|}\Big(\int_{B_h(m)}\frac {Z_h^2}{|B_h(m')|^2}d\nu_h(m')\Big)^{\frac 1 2}
\]
If $t(m)\leq \log(\ell/2\sinh(h))$, since $|B_h(m)|\geq Ch^2$, we get $|K_hf(m)|\leq Ch^{-1}$.

If $t(m)\geq \log(\ell/2\sinh(h))$, since $|B_h(m)|\geq Che^{-t(m)}$ and $d\nu_h(t,y)\leq Che^{-2t}dtdy$, an easy calculation shows that
$|K_hf(m)|\leq Ch^{-\frac 1 2}e^{\frac\tau 2}$ and the proof of i) is complete.\\

Let us prove ii). Let $n\in \nn$ and $m_{n,h}\in M$ such that $t(m)=2nh$. Let $f_{n,h}(m)=\indic_{t(m)>nh}-\indic_{t(m)<nh}$. Then $\parallel f _{n,h}\parallel_{L^\infty}=1$ and $K_h^nf_{n,h}(m_{n,h})=1$.
On the other hand, $\Pi_0(f _{n,h})=-1+2\int_{t(m)\geq nh}d\nu_h(m)=-1+O(h^{-1}e^{-2nh})$.  Therefore, 
$K_h^n(f_{h,n})(m_{h,n})-\Pi_0(f _{n,h})=2+O(h^{-1}e^{-2nh})$ and the proof is complete.
\end{proof}

\section{Smoothing estimates for $K_h$}
In this last section, we shall show that $K_h$ regularizes $L^2$ functions in the sense that 
it gains $1$-derivative. In particular this implies that the eigenfunctions of $K_h$ are in $H^1(M)$.
It is actually possible to prove $C^\infty$ regularity of eigenfunctions outside the 
line $t=\log(\ell/2\sinh(h))$ where the balls start to overlap, but we
do not include it here since it is quite technical
and not really useful for our purpose. On the other hand, it is unlikely to get much better than $H^1$ or $H^2$ global regularity 
for eigenfunctions since the operator itself (as a Fourier integral operator) has a singularity at $t=\log(\ell/2\sinh(h))$, as well as 
the volume of the ball $|B_h(m)|$.
\begin{proposition}\label{smoothing} 
There exists $C>0$ and $h_0>0$ such that for all $0<h<h_0$ and $f\in L^2(M,dv_g)$
\begin{equation}\label{smoothingH1}
||K_h f||_{H^1(M,dv_g)}\leq Ch^{-1}||f||_{L^2(M,dv_g)}
\end{equation}
 where the Sobolev norm $H^1$ is taken with respect to the metric $g$. 
\end{proposition}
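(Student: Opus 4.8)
The plan is to prove the estimate separately on the compact part of $M$ and on the cusp. Fix $t_0$ as in Lemma \ref{belowcut} and choose a partition of unity $1=\chi_0+\chi_1$ on $M$ with $\chi_0$ supported in $\{t\le t_0+2\}$ and $\chi_1$ in $\{t\ge t_0+1\}$, together with cutoffs $\til\chi_j$ equal to $1$ on a neighbourhood of $\supp\chi_j$, with $\til\chi_0$ supported in $\{t\le t_0+3\}$ and $\til\chi_1$ in $\{t\ge t_0\}$. Since $K_h$ integrates only over $B_h(m)$ and $h<1$, one has the \emph{exact} identity $\chi_jK_hf=\chi_jK_h(\til\chi_jf)$, hence $\|K_hf\|_{H^1}^2\le C\sum_{j=0}^1\|K_h(\til\chi_jf)\|_{H^1}^2$, and it is enough to prove $\|K_hg\|_{H^1(M,dv_g)}\le Ch^{-1}\|g\|_{L^2(M,dv_g)}$ for $g$ supported either in $\{t\le t_0+3\}$ or in $\{t\ge t_0\}$. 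For $g$ supported in $\{t\le t_0+3\}$ I would cap off the end $\{t>t_0+4\}$ of $M$ to a closed surface $(X,g_X)$ isometric to $M$ on $\{t\le t_0+4\}$, as in Lemma \ref{approxcompact}; then $K_hg=K^X_hg$, and by Lemma 2.4 of \cite{LebMi} the operator $K^X_h$ is a semiclassical pseudodifferential operator whose symbol is $p(h\xi)+O(h^2)$ with $p(\eta)=|B_1|^{-1}\int_{|v|\le1}e^{i\eta\cdot v}\,dv=O(\langle\eta\rangle^{-1})$. Thus $\langle\xi\rangle\,|p(h\xi)|\le Ch^{-1}$ uniformly in $\xi$, the subprincipal terms only improve this, and $L^2$-boundedness of pseudodifferential operators gives $\|K^X_hg\|_{H^1}\le Ch^{-1}\|g\|_{L^2}$. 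This region is routine given Lebeau--Michel.

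It remains to treat $g$ supported in the cusp $\{t\ge t_0\}$. Writing $g=\sum_k g_k(t)e^{2i\pi ky/\ell}$, the expressions of Section \ref{secondexp} give $K_hg=\sum_k(K_{h,k}g_k)(t)e^{2i\pi ky/\ell}$ with
\[
K_{h,k}u(t)=\frac1{|B_h(t)|}\int_{t-h}^{t+h}u(t')\,b_{h,k}(t,t')\,dt',\qquad b_{h,k}(t,t')=2\,w_h(t,t')\,\frac{\sin\gamma_{h,k}(t,t')}{\gamma_{h,k}(t,t')}\,e^{-t'},
\]
where $\gamma_{h,k}=2\pi k\,w_h/\ell$ and $w_h(t,t')=\min\!\big(e^{t}\sqrt{\sinh(h)^2-(\cosh h-e^{t'-t})^2},\ell/2\big)$ is the $y$-half-width of $B_h(t,y)$ at level $t'$ (cf. \eqref{knot=0}, \eqref{k=0}, \eqref{souslecusp}), with $|B_h(t)|=\int_{t-h}^{t+h}2w_h(t,t')e^{-t'}dt'$ by \eqref{eq:formule_volume}. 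Since in the cusp $\|u\|_{H^1(M,dv_g)}^2=\ell\sum_k\big(\|u_k\|^2+\|\partial_tu_k\|^2+\tfrac{4\pi^2k^2}{\ell^2}\|e^{t}u_k\|^2\big)$ with all norms in $L^2(\rr,e^{-t}dt)$, and $\ell\sum_k\|g_k\|^2=\|g\|^2_{L^2(M,dv_g)}$, the claim reduces to the three bounds, uniform in $k\in\zz$ and $h\in(0,h_0]$:
\[
\|K_{h,k}g_k\|\le C\|g_k\|,\qquad \|\partial_tK_{h,k}g_k\|\le Ch^{-1}\|g_k\|,\qquad |k|\,\|e^{t}K_{h,k}g_k\|\le Ch^{-1}\|g_k\|.
\]

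I would prove these as Schur-type estimates using the explicit geometry. The first follows from $|B_h(t)|=\int2w_he^{-t'}dt'$. For the third, the point is that $|k|\,w_h\,\big|\tfrac{\sin\gamma_{h,k}}{\gamma_{h,k}}\big|=\tfrac{\ell}{2\pi}|\sin\gamma_{h,k}|\le\tfrac{\ell}{2\pi}$, and that for $k\ne0$ the kernel $b_{h,k}(t,\cdot)$ is supported on the set where $w_h(t,\cdot)<\ell/2$ — because $\sin\gamma_{h,k}=\sin(\pi k)=0$ on $\{w_h=\ell/2\}$ — which has Lebesgue measure $O(\min(h,\ell^2e^{-2t}/\sinh h))$; combining this with $|B_h(t)|\ge c\sinh(h)\min(\sinh h,\ell e^{-t})$ (Remark \ref{remarkvolume}, \eqref{asymptvolBh}), a Cauchy--Schwarz in $t'$ followed by a Schur integration in $t$ gives the $O(h^{-1})$ bound. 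For the second, one differentiates under the integral sign: the endpoint terms at $t'=t\pm h$ vanish since $w_h(t,t\pm h)=0$; the term falling on $|B_h(t)|^{-1}$ is $O(1)$ because $|\partial_t|B_h(t)||/|B_h(t)|=O(1)$ by Remark \ref{remarkvolume}; and the term falling on $b_{h,k}$ reduces, via $\partial_t\big(w_h\tfrac{\sin\gamma_{h,k}}{\gamma_{h,k}}\big)=\cos(\gamma_{h,k})\,\partial_tw_h$ and $\partial_tw_h=w_h\,\indic_{\{w_h<\ell/2\}}-\partial_{t'}w_h$, to an $O(1)$ piece plus the operator with kernel $2\cos(\gamma_{h,k})\,\partial_{t'}w_h(t,t')\,e^{-t'}/|B_h(t)|$, whose Schur norm is $\le\sup_t2|B_h(t)|^{-1}\int_{t-h}^{t+h}|\partial_{t'}w_h(t,t')|e^{-t'}dt'$; since $t'\mapsto w_h(t,t')$ is monotone on either side of its maximum with total variation $O(\min(\sinh h,\ell e^{-t}))$ on $\{w_h<\ell/2\}$, this integral is $O(\min(\sinh h,\ell e^{-t}))$, which divided by $|B_h(t)|$ is $O(h^{-1})$.

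The hard part is the uniformity in $t$ of these last two Schur estimates across the transition scale $t\sim\log(\ell/2\sinh h)$, at which $|B_h(t)|$, the shape of the balls $B_h(t,y)$, and the support of $b_{h,k}$ all change character. The mechanism that makes it work is that $K_h$ and its $y$-derivative only feel the two ``caps'' of the ball (the flat middle part contributes $0$ to $b_{h,k}$ for $k\neq0$ because $\sin(\pi k)=0$), so the factor $e^{t}$ generated by $\partial_y$ is absorbed by the $O(\ell^2e^{-2t}/\sinh h)$ measure of the caps, and the square-root singularity of $\partial_{t'}w_h$ at the top and bottom $t'=t\pm h$ of the ball carries only $O(\min(\sinh h,\ell e^{-t}))$ mass against $e^{-t'}dt'$, matching $|B_h(t)|$ up to the factor $\sinh h$. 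The localization errors from the partition of unity cost a factor $h$ and are harmless in all of these estimates.
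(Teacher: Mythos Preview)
Your argument is correct and follows the same overall architecture as the paper: reduce to the compact part via Lebeau--Michel, Fourier decompose in the $\rr/\ell\zz$ variable in the cusp, and then prove the three scalar bounds on each mode $K_{h,k}$. The mechanisms you isolate---$\sin(\pi k)=0$ killing the flat middle of the ball, the total-variation control of $t'\mapsto w_h(t,t')$, and $|B_h(t)|\asymp\sinh(h)\min(\sinh h,\ell e^{-t})$---are exactly the ingredients the paper uses in its first lemma. One point you leave implicit is the second Schur direction (integration in $t$ for fixed $t'$); it goes through by the same computation, using that $\int_{t'-h}^{t'+h}|\partial_{t'}w_h(t,t')|\,dt\asymp e^{t'}\min(\sinh h,\ell e^{-t'})$, which again matches $|B_h|$ up to the factor $\sinh h$.

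The one genuine methodological difference is in the deep cusp. The paper splits the $\partial_t$ estimate into two lemmas: the first treats $t_0\le t\le\log(L/\sinh h)$ by the direct kernel computation you carry out; the second treats large $t$ by a different route, writing $|B_h|\til K_{h,k}$ as a pseudodifferential operator on $\rr$ with symbol $\int e^{2\pi ikze^t/\ell}\sigma(z,\xi)\,dz$ (from the ``first expression'' of $K_h$, integrating on vertical lines) and using the uniform bound $|\xi\sigma(z,\xi)|\le C$. Your approach instead pushes the Schur/total-variation estimate uniformly through the transition $t\sim\log(\ell/2\sinh h)$ with the single formula $w_h=\min(\phi,\ell/2)$. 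This is more elementary and self-contained, at the cost of tracking the cap geometry carefully; the paper's symbol argument is slicker far in the cusp but requires introducing the pseudodifferential calculus on each Fourier mode. Both reach the same $O(h^{-1})$ bound.
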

\begin{proof}
If $M_0=\{m\in M ; t(m)\leq t_0\}$ is a compact part such that $M\setminus M_0$ is isometric to the
cusp $(t_0,\infty)_t\x (\rr/\ell\zz)_y$ with metric $dt^2+e^{-2t}dy^2$ as before, 
then the estimate \eqref{smoothingH1} for $f$ supported in $M_0$ (or a slightly bigger compact set in general) 
is proved in \cite{LebMi} using microlocal analysis. It then remains to analyse the cusp part.
We decompose the proof in two Lemmas. 
\begin{lemma}
Let $L\geq \ell/2$ and $t_0>0$ as above 
then for any $f\in L^2$ supported in the region $\{ t_0\leq t\leq \log(L/\sinh(h))\}$, we have
\[||\pl_tK_hf||_{L^2(M,dv_g)}\leq Ch^{-1}||f||_{L^2(M,dv_g)}\]
while for all $f\in L^2$ supported in $\{t\geq t_0\}$ 
\[||e^t\pl_yK_hf||_{L^2(M,dv_g)}\leq Ch^{-1}||f||_{L^2(M,dv_g)}.\]
\end{lemma}
\begin{proof}
We shall use the Fourier decomposition in the $\rr/\ell\zz$ variable and the expression of $K_h$
in Subsection \ref{secondexp} according to this decomposition.
Let us start with the part $e^t\pl_yK_h$. Since $e^t\pl_y$ amounts to multiplication by $2\pi ike^t/\ell$ on the Fourier 
$k$-th mode in $y$, it suffices to get a bound of the form 
\[||e^{t}kK_{h,k}f_k(t)||_{L^2(e^{-t}dt)}\leq Ch^{-1}||f_k(t)||_{L^2(e^{-t}dt)},\]
but this is straightforward from the expression \eqref{knot=0} by using $||f(\cdot+T)||_{L^2(e^{-t}dt)}=||f||_{L^2(e^{-t}dt)}e^{T/2}$, 
the fact that the size of integration in $T$ is less than $h$ and
$|B_h(t)| \geq \eps e^{-t}h$ for some $\eps>0$ in the region $\{e^t\sinh(h)\geq \ell/2\}$. 
Now we have to consider the operators with $\pl_tK_{h,k}$, say acting on smooth functions, 
and this needs a bit more care because of the lack of smoothness
on the line $\{e^t\sinh(h)=\ell/2\}$. First, observe that $|B_h(t)|$ is a $C^1$ function of $t$, which is smooth outside
$\{e^t\sinh(h)=\ell/2\}$, and we have $\pl_t|B_h|/|B_h|\in [0,\eps^{-1}]$ for some $\eps>0$, 
this follows directly from the explicit formula \eqref{eq:formule_volume}. As a consequence, when the derivative $\pl_t$
hits $|B_h(t)|^{-1}$ or $e^{-t-T}$ in \eqref{knot=0} or in \eqref{k=0}, 
one obtains terms which are estimated like we did above for $ke^tK_{h,k}$.
Now let us assume $e^t\sinh(h)\geq \ell/2$. Then using $\alpha(\log T_\pm(t))=e^{-t}$ we have 
$\sin(\pi ke^t\alpha(\log T_\pm(t)))=0$ and we thus obtain from \eqref{knot=0} that for $k\not=0$ 
\begin{equation}\label{plt}
\begin{gathered}
\frac{\pl_t(|B_h|e^{t}K_{h,k}f(t))}{|B_h|e^t}=(K_{h,k}\pl_tf)(t)\\
+\ell |B_h|^{-1}\int_{-h}^{\log T_-(t)}+\int_{\log T_+(t)}^h f(t+T)
\alpha(T)\cos(k\pi e^t\alpha(T))e^{-T}dT.
\end{gathered}\end{equation}
Using similar arguments as above and the fact that $|\alpha(T)|\leq|\alpha(\log T_\pm(t))|=e^{-t}$ on the interval of integration in $T$ , 
the last term in \eqref{plt} is a bounded operator on $L^2(e^{-t})$, with norm bounded by $Ch^{-1}$.
Now for the first term of \eqref{plt}, it suffices to integrate by parts in $T$  
and use the fact that $\alpha(\pm h)=0$ to obtain
\[\begin{gathered}
(K_{h,k}\pl_tf)(t)=K_{h,k}f(t)\\
-\ell|B_h|^{-1}\int_{-h}^{\log T_-(t)}+\int_{\log T_+(t)}^h f(t+T)
(\pl_T\alpha)(T)\cos(k\pi e^t\alpha(T))e^{-T}dT.
\end{gathered}\]
If we cut-off to the region $e^t\sinh(h)\leq L$, this is an operator bounded on $L^2(e^{-t}dt)$ with norm bounded by
\[Ch^{-2}\int_{-h}^h 
|\pl_T\alpha(T)|dT=O(h^{-1})\]
where we used that $\alpha(T)$ is monotone on each of the $2$ intervals $[-h,0]$ and $[0,h]$ and that
its maximum is $\alpha(0)=O(h)$. 
Finally, the case $k=0$ is dealt with in the same way: the boundary terms in the integrals $(K^1_{h,0}+K^2_{h,0})f(t)$ 
cancel out those of $K_{h,0}^3f(t)$ and the other terms are estimated exactly like we did for $k\not=0$. 
This finishes the proof for the region $\{e^t\sinh(h)\geq \ell/2\}$.
As for the region ${ e^{t_0}\leq e^t\sinh(h)\leq \ell/2}$, we consider the expression \eqref{souslecusp} and apply the same 
exact method, this is even simpler.
\end{proof}
Then we end the proof of the Proposition with the 
\begin{lemma}
Let $L\geq \ell/2$, then for any $f\in L^2$ supported in the region $\{ t\leq \log(L/\sinh(h))\}$, we have
\[||\pl_tK_hf||_{L^2(M,dv_g)}\leq Ch^{-1}||f||_{L^2(M,dv_g)}.\]
\end{lemma} 
\begin{proof} 
We use the Fourier decomposition $f(t,y)=\sum_{k}f_k(t)e^{2i\pi k y/\ell}$ in the $\rr/\ell\zz$ variable and the expression of $K_h$ 
in Subsection \ref{firstexp}. We shall work on $L^2(\rr,dt)$ on each Fourier mode, which amounts
to conjugating by $e^{t/2}$ to pass unitarily from $L^2(e^{-t}dt)$ to $L^2(dt)$: 
let $\til{K}_h:=e^{t/2}K_he^{-t/2}$ and $\til{K}_{h,k}$ its decomposition on the $k$ Fourier mode $f_k(t)$ of $f(t,y)$. Then 
 from \eqref{expressionK^h_1} and similar arguments as for identity \eqref{plancherel}, we have 
\[ |B_h(t)|\til{K}_{h,k}f_k(t)= \int_{-\frac{e^{-t}\ell}{2}}^{\frac{e^{-t}\ell}{2}}e^{\frac{2\pi ikze^t}{\ell}}
\int e^{it\xi}\hat{f}_k(\xi)\sigma(z,\xi)d\xi dz\]
with 
\[\sigma(z,\xi):= \frac{(\cosh(h)+\sqrt{\sinh(h)^2-z^2})^{\demi+i\xi}-
(\cosh(h)-\sqrt{\sinh(h)^2-z^2})^{\demi+i\xi}}{(\demi+i\xi)(1+z^2)^{\demi+i\xi}}.\]
Then we obtain
\[ \begin{split}
\pl_t (|B_h(t)|\til{K}_{h,k}f_k)(t)= & \, \pl_t \int_{-\frac{e^{-t}\ell}{2}}^{\frac{e^{-t}\ell}{2}}e^{\frac{2\pi ikze^t}{\ell}}
\int e^{it\xi}\hat{f}_k(\xi)\sigma(z,\xi)d\xi dz\\
& = \, \int_{-\frac{e^{-t}\ell}{2}}^{\frac{e^{-t}\ell}{2}}e^{\frac{2\pi ikze^t}{\ell}}
\int e^{it\xi}\hat{f}_k(\xi)i\xi \sigma(z,\xi)d\xi dz \\
& \, + \int_{-\frac{e^{-t}\ell}{2}}^{\frac{e^{-t}\ell}{2}}\pl_z (e^{\frac{2\pi ikze^t}{\ell}})
\int e^{it\xi}\hat{f}_k(\xi) z\sigma(z,\xi)d\xi dz\\ 
& \,-\frac{(-1)^k e^{-t}\ell}{2}\int e^{it\xi}\hat{f}_k(\xi)( \sigma(\frac{e^{-t}\ell}{2},\xi)
+\sigma(-\frac{e^{-t}\ell}{2},\xi))d\xi.
\end{split}
\]
The term in the second line is clearly bounded by $Ce^{-t}||f_k||_{L^2(dt)}$ 
since $|\xi\sigma(z,\xi)|\leq C$ uniformly in $|z|\leq e^{-t}\ell/2$ and $k$. The same is true for 
the term in the last line while for the middle one, one can use integration by parts in $z$, which 
makes a boundary term of the same type as the last line term, plus a term similar to the first term but now with 
$\pl_z(z\sigma(z,\xi))$ instead of $\xi\sigma(z,\xi)$. Since $|\pl_z(z\sigma(z,\xi))|\leq C$ uniformly
in $|z|e^{-t}\ell/2$ and $k$, this achieves the proof. 
\end{proof}
The Proposition is then proved by combining the two Lemmas above.
\end{proof}


\begin{thebibliography}{99}

\bibitem{Dia} P. Diaconis, \emph{The Markov chain Monte Carlo revolution}, Bull. Amer. Math. Soc., \textbf{46}, 2009, no2, 179--205.

\bibitem{GuMi} C. Guillarmou, L. Michel, \emph{Spectral analysis of some random walk operators on unbounded domains }, preprint (2010).

\bibitem{DiaLeb} P. Diaconis, G. Lebeau, \emph{Micro-local analysis for the Metropolis algorithm},   Math. Z.  \textbf{262}  (2009),  no. 2, 411--447.

\bibitem{DiaLebMi} P. Diaconis, G. Lebeau, L. Michel, \emph{Geometric analysis for the Metropolis algorithm on Lipschitz domains}, preprint http://math.unice.fr/$\sim$lmichel  

\bibitem{LebMi} G. Lebeau, L. Michel, \emph{ Semiclassical analysis of a random walk on a manifold},  
Annals of Probability, \textbf{38} (2010), 277-315.


\bibitem{Mu} W. M\"uller, \emph{Spectral geometry and scattering theory for certain complete surfaces
of finite volume}, Invent. Math. \textbf{109} (1992), 265-305.

\end{thebibliography}
\end{document}